\providecommand{\U}[1]{\protect\rule{.1in}{.1in}}
\newtheorem{theorem}{Theorem}[section]
\newtheorem{lemma}[theorem]{Lemma}
\newtheorem{corollary}[theorem]{Corollary}
\newtheorem{proposition}[theorem]{Proposition}
\theoremstyle{definition}
\newtheorem{example}[theorem]{Example}
\newtheorem{definition}[theorem]{Definition}
\theoremstyle{remark}
\newtheorem{remark}[theorem]{Remark}
\begin{document}
\title{Kashiwara and Zelevinsky involutions in affine type $A$}
\date{}
\author{Nicolas Jacon and C\'edric Lecouvey}
\maketitle

\begin{abstract}
We describe how the Kashiwara involution $\ast $ on crystals of affine type $
A$ is encoded by the combinatorics of aperiodic multisegments. This permits
to prove in an elementary way that $\ast $ coincides with the Zelevinsky
involution $\tau $ on the set of simple modules for the affine Hecke
algebras. We then give efficient procedures for computing $\ast $ and $\tau $
. Remarkably, these procedures do not use the underlying crystal structure.
They also permit to match explicitly the Ginzburg and Ariki parametrizations
of the simple modules associated to affine and cyclotomic Hecke algebras,
respectively .
\end{abstract}

\section{Introduction}

The Kashiwara involution $\ast $ in affine type $A$ is a fundamental
anti-\linebreak isomorphism of the quantum group $\mathcal{U}_{v}$
associated to the affine root system $A_{e-1}^{(1)}.$ It induces a subtle
involution on $B_{e}(\infty )$, the Kashiwara crystal corresponding to the
negative part $\mathcal{U}_{v}^{-}$ of $\mathcal{U}_{v}$. The Zelevinsky
involution yields an involution $\tau $ of the affine Hecke algebra of type $
A$. When $q$ is specialized to an $e$-th root of $1,$ $\tau $ also induces
an involution on $B_{e}(\infty ).$ In this paper, we show by using the
combinatorics of aperiodic multisegments that the Kashiwara and Zelevinsky
involutions coincide on $B(\infty )$. We also provide efficient procedures
for computing these involutions. In addition, our results permit to match
explicitly the Ginzburg and Ariki parametrizations of the simple modules
associated to affine and cyclotomic Hecke algebras respectively. All our
computations can be made independent of the crystal structure on $
B_{e}(\infty )$. Moreover, they do not require the determination of $i$
-induction or $i$-restriction operations on simple modules. Let us now
describe the context and the results of the paper more precisely.

The Zelevinsky involution first appeared in \cite{Ze} in connection with the
representation theory of the linear group $GL(n,\mathbb{F}_{\mathfrak{p}})$
over the $\mathfrak{p}$-adic field {\ $\mathbb{F}_{\mathfrak{p}}$}. Works by
Moeglin and Waldspurger \cite{MW} then permit to link it with a natural
involution $\tau $ of the affine type $A$ Hecke algebra $\mathcal{H}
_{n}^{a}(q)$ over the field $\mathbb{F}$ with generic parameter $q$. {When $
e\geq 2$ is an integer and }$q${\ is specialized at $\xi $, a primitive $e$
-root of $1$,} it was conjectured by Vigneras \cite{Vi} that this involution
should be related to the modular representation theory of $GL(n,\mathbb{F}_{ 
\mathfrak{p}})$. In the sequel we will refer to $\tau $ has the Zelevinsky
involution of $\mathcal{H}_{n}^{a}(\xi )$ (see Section \ref{Sec_Ze} for a
complete definition).

 The involution $\tau$ induces an involution on the set of simple $
\mathcal{H}_{n}^{a}(\xi)$-modules. There exist essentially two different
parametrizations of these modules in the literature. In the geometric
construction of Chriss and Ginzburg \cite{CG} and under the assumption $
\mathbb{F=C}$, the simple $\mathcal{H}_{n}^{a}(\xi)$-modules are labelled by
aperiodic multisegments. These simple modules can also be regarded as simple
modules associated to Ariki-Koike algebras $\mathcal{H}_{n}^{\mathbf{v}}(\xi
)$. The Specht module theory developed by Dipper, James and Mathas then
provides a labelling of the simple $\mathcal{H}_{n}^{a}(\xi)$-modules by
Kleshchev multipartitions.\ Both constructions permit to endow the set of
simple $\mathcal{H}_{n}^{a}(\xi)$-modules with the structure of a crystal
isomorphic to $B_{e}(\infty)$. The Kashiwara crystal operators then yield
the modular branching rules for the Ariki-Koike algebras and affine Hecke
algebras of type $A$ (\cite{Ari4}, \cite{AJL}).

  In \cite{Groj}, Grojnowski uses $i$-induction and $i$-restrictions
operators to define an abstract crystal structure on the set of simple $
\mathcal{H}_{n}^{a}(\xi)$-modules. He then proves that this crystal is in
fact isomorphic to $B_{e}(\infty)$. This approach is valid over an arbitrary
field $\mathbb{F}$ and does not require the Specht module theory of Dipper
James and Mathas. This notably permits to extend the methods of \cite{Groj}
to the representation theory of the cyclotomic Hecke-Clifford superalgebras 
\cite{BK}.\ Nevertheless, this approach does not match up the abstract
crystal obtained with the labellings of the simple modules by aperiodic
multisegments or Kleshchev multipartitions. Since the $i$-induction
operation on simple modules is difficult to obtain in general, it is also
not really suited to explicit computations.

The identification of $\mathcal{U}_{v}^{-}$ with the composition subalgebra
of the Hall algebra associated to the cyclic quiver of type $A_{e}^{(1)}$
yields two different structures of crystal on the set of aperiodic
multisegments. They both come from two different parametrizations of the
canonical basis of $\mathcal{U}_{v}^{-}$ which correspond under the
anti-isomorphism $\rho $ on $\mathcal{U}_{v}^{-}$ switching the generators $
f_{i}$ and $f_{-i}$. In particular $\rho $ provides an involution on the
crystal $B_{e}(\infty )$ which can be easily computed. The use of the
composition algebra also permits to describe explicitly the structure of
Kashiwara crystal on the set of aperiodic multisegments. This was obtained
in \cite{LTV} by Leclerc, Thibon and Vasserot. In addition, these authors
prove that the involution $\tau $ on $B_{e}(\infty )$ satisfies the identity $
\tau =\sharp \circ \rho $ where $\sharp $ is the two fold symmetry on $
B_{e}(\infty )$ which switches the sign of each arrow.

In this paper, we first establish that the two crystal structures on
aperiodic multisegments obtained by identifying $\mathcal{U}_{v}^{-}$ with
the composition algebra correspond up to the conjugation by the Kashiwara
involution $\ast $.\ This implies that 
\begin{equation*}
\ast =\tau \text{ on }B_{e}(\infty ).
\end{equation*}
Observe that an equivalent identity can also be established by using results
of \cite{Groj} but, as mentioned above, it then requires subtle
considerations on representation theory of $\mathcal{H}_{n}^{a}(\xi )$ and
does not permit to compute $\ast =\tau $ efficiently. In contrast our proof
uses only elementary properties of crystal graphs and yields efficient
procedures for computing the involution $\ast =\tau $. {This notably
permits us to generalize an algorithm of Moeglin and Waldspurger which gives
the Zelevinsky involution when $e=\infty $.}

 As a consequence, {extending works of Vazirani \cite{V}, we
completely} solve the following natural problem. Given a simple $\mathcal{H}
_{n}^{a}(\xi )$-module $L_{\psi }$ (with $\psi $ an aperiodic multisegment $
\psi $), {\ we find all the Ariki-Koike algebras $\mathcal{H}_{n}^{\mathbf{v}
}(q)$ and the simple $\mathcal{H}_{n}^{\mathbf{v}}(q)$-modules $D^{{\ 
\boldsymbol{\lambda}}}$ (with $\boldsymbol{\lambda}$ a Kleshchev
multipartition) such that $D^{{\boldsymbol{\lambda}}}\simeq L_{\psi }$ as $
\mathcal{H}_{n}^{a}(\xi )$-modules.} The procedure yielding the Kashiwara
involution also permits to compute the commutor of $A_{e}^{(1)}$-crystals
introduced by Kamnitzer and Tingley in \cite{KT}.

The paper is organized as follows. In Section 2, we review the
identification of $\mathcal{U}_{v}^{-}$ with the composition algebra and the
two structures of crystal it gives on the set of aperiodic multisegments.\
We also recall basic facts on the Kashiwara involution. Section 3 is devoted
to the definition of the Zelevinsky involution on the set of simple $
\mathcal{H}_{n}^{a}(\xi )$-modules and to the results of \cite{LTV}. In
Section 4, we prove the identity $\ast =\tau .$ The problem of determining
the algebras $\mathcal{H}_{n}^{\mathbf{v}}(\xi )$ and the simple $\mathcal{H}
_{n}^{\mathbf{v}}(\xi )$-modules isomorphic to a given simple $\mathcal{H}
_{n}^{a}(\xi )$-module is studied in Section 5. In the last two sections, we
give a simple combinatorial procedure for computing the involutions $\tau
,\rho $ and $\sharp $ on $B_{e}(\infty ).$ We prove in fact that all these
computations can essentially be obtained from the Mullineux involution on $e$
-regular partitions and the crystal isomorphisms described in \cite{JL}. We
also investigate several consequences of our results.

\section{Quantum groups and crystals in affine type $A$}

\subsection{The quantum group $\mathcal{U}_{v}$}

\label{subsec_Hall}

Let $v$ be an indeterminate and $e\geq 2$ an integer. Write $\mathcal{U}
_{v}( \widehat{\mathfrak{sl}_{e}})$ for the quantum group of type $
A_{e-1}^{(1)}$. This is an associative $\mathbb{Q}(v)$-algebra with
generators $e_{i},f_{i},t_{i},t_{i}^{-1}$, $i\in \mathbb{Z}/e\mathbb{Z}$ and 
$\partial $ (see \cite[\S 2.1]{U} for the complete description of the
relations satisfied by these generators). Write $\{\Lambda _{0},...,\Lambda
_{e-1},\delta \}$ and $\{\alpha _{0},\ldots ,\alpha _{e-1}\}$ respectively
for the set of fundamental weights and the set of simple roots associated to 
${\mathcal{U}_{v}(\widehat{\mathfrak{sl}_{e}})}$. Let $P$ be the weight
lattice of ${\mathcal{U}_{v}(\widehat{\mathfrak{sl}_{e}})}$. We denote by ${
\ \mathcal{U}}_{v}={\mathcal{U}_{v}^{\prime }(\widehat{\mathfrak{sl}_{e}})}$
the subalgebra generated by $e_{i},f_{i},t_{i},t_{i}^{-1},$ $i\in \mathbb{Z}
/e\mathbb{Z}$. Then $\overline{P}=P/\mathbb{Z\delta }$ is the set of
classical weights of ${\mathcal{U}}_{v}$. For any $i\in \mathbb{Z}/e\mathbb{
Z }$, we also denote by $\Lambda _{i}$ and $\alpha _{i}$ the restriction of $
\Lambda _{i}$ and $\alpha _{i}\in P$ to $\overline{P}^{\wedge }$.\ Let ${\ 
\mathcal{U}_{v}^{-}}$ be the subalgebra of ${\mathcal{U}_{v}}$ generated by
the $f_{i}$'s with $i\in \mathbb{Z}/e\mathbb{Z}$.

\subsection{Two crystal structures on the set of aperiodic multisegments}

\label{subsec_twoactions}

\begin{definition}
Let $l\in {\mathbb{Z}}_{>0}$ and $i\in {\mathbb{Z}}/e{\mathbb{Z}}$. The 
\emph{segment of length $l$ and head $i$} is the sequence of consecutive
residues $[i,i+1,...,i+l-1]$. We denote it by $[i;l)$. Similarly, The \emph{
\ segment of length $l$ and tail $i$} is the sequence of consecutive
residues $[i-l+1,...,i-1,i]$. We denote it by $(l;i]$.
\end{definition}

\begin{definition}
A collection of segments is called a \emph{multisegment}. If the collection
is the empty set, we call it the empty multisegment and it is denoted by $
\boldsymbol{\emptyset}$.
\end{definition}

It is convenient to write a multisegment $\psi $ on the form 
\begin{equation*}
\psi =\sum_{i\in \mathbb{Z}/e\mathbb{Z},l\in \mathbb{N}_{>0}}m_{[i;l)}[i;l).
\end{equation*}

\begin{definition}
A multisegment $\psi $ is \emph{aperiodic} if, for every $l\in {\mathbb{Z}}
_{>0}$, there exists some $i\in {\mathbb{Z}}/e{\mathbb{Z}}$ such that $(l;i]$
does not appear in $\psi $. Equivalently, a multisegment $\psi $ is
aperiodic if, for each $l\in {\mathbb{Z}}_{>0}$, there exists some $i\in {\ 
\mathbb{Z}}/e{\mathbb{Z}}$ such that $[i;l)$ does not appear in $\psi $. We
denote by $\Psi _{e}$ the set of aperiodic multisegments.
\end{definition}

Let $B_{e}(\infty )$ be the (abstract) crystal basis of $\mathcal{U}_{v}^{-}$
. By results of Ringel and Lusztig, the algebra ${\mathcal{U}_{v}^{-}}$ is
isomorphic to the composition algebra of the Hall algebra associated to the
cyclic quiver $\Gamma _{e}$ of length $e.$ This yields in particular a
natural parametrization of the vertices of $B_{e}(\infty )$ by $\Psi _{e}.$
We can thus regard the vertices of $B_{e}(\infty )$ as aperiodic
multisegments.\ The corresponding crystal structure was described by
Leclerc, Thibon and Vasserot in \cite[Theorem 4.1]{LTV}. In fact we shall
need in the sequel two different structures of crystal on $\Psi _{e}.$ They
are linked by the involution $\rho $ which negates all the segments of a
given multisegment, that is such that 
\begin{equation}
\psi ^{\rho }=\sum_{i\in \mathbb{Z}/e\mathbb{Z},l\in \mathbb{N}
_{>0}}m_{[i;l)}(l;-i]  \label{ro_psi}
\end{equation}
for any multisegment $\psi =\sum_{i\in \mathbb{Z}/e\mathbb{Z},l\in \mathbb{N}
_{>0}}m_{[i;l)}[i;l)$. The involution $\rho $ as a natural algebraic
interpretation since it also yields a linear automorphism of the Hall
algebra associated to $\Gamma _{e}.$ Since we do not use Hall algebras in
this paper, we only recall below the two crystal structures relevant for our
purpose.

Let $\psi $ be a multisegment and let $\psi _{\geq l}$ be the multisegment
obtained from $\psi $ by deleting the multisegments of length less than $l$,
for $l\in {\mathbb{Z}}_{>0}$. Denote by $m_{[i;l)}$ the multiplicity of $
[i;l)$ in $\psi $. For any $i\in \mathbb{Z}/e\mathbb{Z}$, set 
\begin{equation*}
\widehat{S}_{l,i}=\sum_{k\geq l}(m_{[i+1;k)}-m_{[i;k)}).
\end{equation*}
Let $\widehat{l}_{0}$ be the minimal value of $l$ that attains $\min_{l>0} 
\widehat{S}_{l,i}$.

\begin{theorem}
\label{LTV-1} Let $\psi $ be a multisegment, $i\in {\mathbb{Z}}/e{\mathbb{Z}}
$ and let $\widehat{l}_{0}$ be as above. Then we have 
\begin{equation*}
\widehat{f}_{i}\psi =\psi _{\widehat{l}_{0},i},
\end{equation*}
where the multisegment $\psi _{\widehat{l}_{0},i}$ is defined as follows 
\begin{equation*}
\psi _{\widehat{l}_{0},i}=\left\{ 
\begin{array}{ll}
\psi +[i;1) & \text{ if }\widehat{l}_{0}=1, \\ 
\psi +[i;\widehat{l}_{0})-[i+1;\widehat{l}_{0}-1) & \text{ if }\widehat{l}
_{0}>1.
\end{array}
\right.
\end{equation*}
\end{theorem}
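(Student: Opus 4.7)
The plan is to invoke the Ringel--Lusztig identification of $\mathcal{U}_v^-$ with the composition subalgebra of the Hall algebra of the cyclic quiver $\Gamma_e$. Under this identification, each aperiodic multisegment $\psi$ parametrizes a PBW-type basis element $u_\psi$, obtained by taking the indecomposable nilpotent $\Gamma_e$-representation associated to each segment $[i;l)$ (the one with top at vertex $i$ and length $l$) and forming the direct sum dictated by $\psi$. The generator $f_i$ is identified (up to a central twist) with the class of the simple $S_i$, so $\widehat{f}_i\psi$ can be extracted from the expansion of $f_i \cdot u_\psi$ in the PBW basis by specialization at $v=0$, using the fact that the PBW basis agrees with the canonical basis modulo $vL(\infty)$.

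The core step is then to analyze the Hall-algebra product $f_i \cdot u_\psi$. Short exact sequences
\begin{equation*}
0 \to M_\psi \to N \to S_i \to 0
\end{equation*}
are either split, contributing the multisegment $\psi + [i;1)$, or non-split. By the classification of indecomposables over $\Gamma_e$, every non-split extension amounts to gluing $S_i$ to the top of an existing segment $[i+1;k-1)$, producing the multisegment $\psi + [i;k) - [i+1;k-1)$. Computing the Hall numbers of each such extension and collecting the resulting powers of $v$ should yield an expansion whose exponents are precisely the partial sums $\widehat{S}_{l,i}$ appearing in the statement, up to an overall shift.

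With the expansion in hand, applying Kashiwara's rule identifies $\widehat{f}_i \psi$ as the summand achieving the minimal $v$-exponent when one lets $v\to 0$; breaking ties by selecting the smallest $l$ recovers the definition of $\widehat{l}_0$ and the case split $\widehat{l}_0=1$ versus $\widehat{l}_0>1$ corresponds exactly to the split versus non-split dichotomy above. The aperiodicity of $\psi$ enters to guarantee that the resulting PBW-element still lies in the relevant basis.

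I expect the main obstacle to be the degree bookkeeping: rewriting the Hall numbers, whose exponents involve the Euler form of $\Gamma_e$, as the telescoping sum $\sum_{k \geq l}(m_{[i+1;k)} - m_{[i;k)})$, and then verifying that the minimum over $l$ really picks out a \emph{single} leading term (so that the crystal rule outputs a well-defined multisegment) is the delicate combinatorial heart of the argument. A clean way to handle this is to encode the relevant segment lengths as a signed word in $+/-$ and perform a bracket matching; this simultaneously proves the existence and uniqueness of $\widehat{l}_0$ and matches the answer to the formula in the statement.
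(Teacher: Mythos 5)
The paper itself offers no proof of this theorem: it is quoted from Leclerc--Thibon--Vasserot (their Theorem 4.1 gives the $\widetilde{f}_{i}$-version, Theorem \ref{LTV-2}, and the $\widehat{f}_{i}$-version here is its conjugate under $\rho$). Your strategy --- identify $\mathcal{U}_{v}^{-}$ with the composition algebra, realize $\psi$ as a PBW-type element $u_{\psi}$, expand $f_{i}\cdot u_{\psi}$ via Hall numbers, and read off the crystal operator from the leading term at $v=0$ --- is exactly the route taken in the original LTV proof, so the choice of method is sound. But as written the proposal is a plan rather than a proof: the two steps that constitute the entire content of the theorem are explicitly deferred. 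You say the Hall-number exponents ``should yield'' the partial sums $\widehat{S}_{l,i}$ and that verifying the minimum is attained by a single leading term is ``the delicate combinatorial heart of the argument''; neither is carried out. Without the explicit computation of the degrees $\langle \dim S_{i},\dim M\rangle$ coming from the Euler form, the telescoping that produces $\sum_{k\geq l}(m_{[i+1;k)}-m_{[i;k)})$, and the unitriangularity statement relating the PBW-type basis to the canonical basis (which is what justifies reading the crystal operator off the leading coefficient), the theorem is not established.

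There is also a convention issue you must resolve, because it is precisely what distinguishes the two theorems the paper states. The set $\Psi_{e}$ carries \emph{two} crystal structures, $\widehat{f}_{i}$ and $\widetilde{f}_{i}=\rho\circ\widehat{f}_{-i}\circ\rho$, and both arise from the Hall algebra via the two parametrizations of the canonical basis. Which one you obtain depends on the orientation of the cyclic quiver and on whether $g_{AB}^{C}$ counts submodules isomorphic to $B$ with quotient $A$ or vice versa (equivalently, whether $f_{i}$ multiplies on the left or the right). Your description of non-split extensions $0\to M_{\psi}\to N\to S_{i}\to 0$ as ``gluing $S_{i}$ to the top of $[i+1;k-1)$ to produce $[i;k)$'' happens to match the formula for $\widehat{f}_{i}$, but nothing in the argument as given rules out that the same words, with the opposite convention, would instead prove Theorem \ref{LTV-2}. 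A complete proof must fix these choices and check they are consistent with the head/tail conventions for segments used in the statement; alternatively, it would be cleaner to prove the LTV version (Theorem \ref{LTV-2}) in whichever convention the Hall-algebra computation naturally produces, and then deduce this theorem by applying the segment-reversal involution $\rho$ as in (\ref{rel_tilde_hat}).
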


The crystal structure on $\Psi _{e}$ obtained from the action of the
operators $\widehat{f}_{i},$ $i\in \mathbb{Z}/e\mathbb{Z}$ does not coincide
with that initially described by Leclerc, Thibon and Vasserot.\ The LTV
crystal structure stated in \cite{LTV} is obtained by using the crystal
operators 
\begin{equation}
\widetilde{f}_{i}=\rho \circ \widehat{f}_{-i}\circ \rho, \qquad i\in \mathbb{Z}/e 
\mathbb{Z}  \label{rel_tilde_hat}
\end{equation}
rather than the operators $\widehat{f}_{i}$. More precisely, set $
S_{l,i}=\sum_{k\geq l}(m_{(k;i-1]}-m_{(k;i]})$.\ Let $l_{0}$ be the minimal $
l$ that attains $\min_{l>0}S_{l,i}$. Then, the crystal structure
corresponding to the $\widetilde{f}_{i}$'s is given as follows.

\begin{theorem}
\label{LTV-2} Let $\psi $ be a multisegment and let $i\in {\mathbb{Z}}/e{\ 
\mathbb{Z}}$ and $l_{0}$ be as above. Then we have 
\begin{equation*}
\widetilde{f}_{i}\psi =\psi _{l_{0},i},
\end{equation*}
where the multisegment $\psi _{{l}_{0},i}$ is defined as follows 
\begin{equation*}
\psi _{l_{0},i}=\left\{ 
\begin{array}{ll}
\psi +(1;i] & \text{ if }{l}_{0}=1, \\ 
\psi +({l}_{0};i]-({l}_{0}-1;i-1] & \text{ if }{l}_{0}>1.
\end{array}
\right.
\end{equation*}
\end{theorem}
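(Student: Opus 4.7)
The plan is to derive Theorem~\ref{LTV-2} directly from Theorem~\ref{LTV-1} by transporting the action of $\widehat{f}_{-i}$ through $\rho$, as prescribed by the defining identity (\ref{rel_tilde_hat}). The argument is purely combinatorial; the one delicate point is the translation between the head and tail parametrizations of a segment induced by $\rho$.

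First I would record how $\rho$ affects multiplicities. Every segment admits both a head description $[i;l)$ and a tail description $(l;i+l-1]$, and the definition of $\rho$ gives $\rho([i;l)) = (l;-i] = [-i-l+1;l)$. Reindexing the formula (\ref{ro_psi}) then yields the identity $m_{[j;k)}(\psi^{\rho}) = m_{(k;-j]}(\psi)$ for every $j\in\mathbb{Z}/e\mathbb{Z}$ and $k\in\mathbb{Z}_{>0}$. Specializing to $j=-i+1$ and $j=-i$ gives
\begin{equation*}
m_{[-i+1;k)}(\psi^{\rho}) = m_{(k;i-1]}(\psi) \qquad \text{and} \qquad m_{[-i;k)}(\psi^{\rho}) = m_{(k;i]}(\psi),
\end{equation*}
from which one reads off $\widehat{S}_{l,-i}(\psi^{\rho}) = S_{l,i}(\psi)$. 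In particular, the integer $\widehat{l}_0$ furnished by Theorem~\ref{LTV-1} applied to $\psi^{\rho}$ at residue $-i$ coincides with the integer $l_0$ of the present statement.

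It then remains to apply Theorem~\ref{LTV-1} to $\psi^{\rho}$ at residue $-i$ and push the result back through $\rho$, extended $\mathbb{Z}$-linearly to formal sums of segments. Using $\rho([-i;1)) = (1;i]$, $\rho([-i;\widehat{l}_0)) = (\widehat{l}_0;i]$, and $\rho([-i+1;\widehat{l}_0-1)) = (\widehat{l}_0-1;i-1]$, the two cases $\widehat{l}_0=1$ and $\widehat{l}_0>1$ produce respectively $\widetilde{f}_i\psi = \psi + (1;i]$ and $\widetilde{f}_i\psi = \psi + (l_0;i] - (l_0-1;i-1]$, which are exactly the formulas asserted. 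The main obstacle, such as it is, lies in keeping the residue sign conventions straight in the multiplicity identity above; once that bookkeeping is carried out, the proof amounts to a mechanical transport of Theorem~\ref{LTV-1} along $\rho$.
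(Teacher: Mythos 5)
Your derivation is correct and follows exactly the route the paper intends: Theorem \ref{LTV-2} is presented there as the original statement of Leclerc--Thibon--Vasserot, linked to Theorem \ref{LTV-1} precisely by the relation (\ref{rel_tilde_hat}), so transporting \ref{LTV-1} along $\rho$ is the canonical argument. Your bookkeeping checks out: the multiplicity identity $m_{[j;k)}(\psi^{\rho})=m_{(k;-j]}(\psi)$, the resulting equality $\widehat{S}_{l,-i}(\psi^{\rho})=S_{l,i}(\psi)$, and the images of the three segments under $\rho$ are all right, so the two cases match the asserted formulas.
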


Let $\psi$ be a multisegment. Then to compute $\widetilde{e}_{i}\psi$, we
proceed as follows. If $\min_{l>0}S_{l,i}=0$, then $\widetilde{e}_{i}\psi=0$
. Otherwise, let $l_{0}$ be the maximal $l$ that attains $\min_{l>0}S_{l,i}$
. Then, $\widetilde{e}_{i}\psi$ is obtained from $\psi$ by replacing $
(l_{0};i]$ with $(l_{0}-1;i-1]$.

In the sequel, we identify $B_{e}(\infty )$ with the crystal structure
obtained on $\Psi _{e}$ by considering the operators $\widetilde{f}_{i},$ $
i\in \mathbb{Z}/e\mathbb{Z}$ (see also Remark \ref{rem}). Then $\rho $
induces an involution on $B_{e}(\infty )$ and the crystal operators $
\widetilde{f}_{i}$ and $\widehat{f}_{i}$ are related by (\ref{rel_tilde_hat}
).We denote by $\mathrm{wt}(\psi )$ the weight of the aperiodic multisegment 
$\psi $ considered as a vertex of the crystal $B_{e}(\infty ).$ Set 
\begin{equation}
\mathrm{wt}(\psi )=\sum_{i\in \mathbb{Z}/e\mathbb{Z}}\mathrm{wt}_{i}(\psi )\
\Lambda _{i}.  \label{weight}
\end{equation}
For any $i\in \mathbb{Z}/e\mathbb{Z}$, define $\varepsilon _{i}(\psi )=\max
\{k\in \mathbb{N}\mid \widetilde{e}_{i}^{k}(u)\neq 0\}$ and $\varphi
_{i}(\psi )=\mathrm{wt}_{i}(\psi )+\varepsilon _{i}(\psi )$.

\subsection{The Kashiwara involution}

The Kashiwara involution $\ast$ is the $\mathcal{U}_{v} (\widehat {\mathfrak{
\ sl}_{e}})$-antiautomorphism such that $q^{\ast}=q$ and defined on the
generators as follows: 
\begin{equation}
{e}_{i}^{\ast}={e}_{i},\quad{f}_{i}^{\ast}={f}_{i},\quad
t_{i}^{\ast}=t^{-1}_{i}.\quad
\end{equation}

Since $\ast$ stabilizes ${\mathcal{U}_{v}^{-}}$, it induces an involution
(also denoted $\ast$) on $B_{e}(\infty)$ the crystal graph of ${\mathcal{U}
_{v}^{-}}$.\ By setting for any vertex $b\in B_{e}(\infty)$ and any $i\in 
\mathbb{Z}/e\mathbb{Z}$ 
\begin{equation}
\widetilde{e}_{i}^{\ast}(b)=\widetilde{e}_{i}(b^{\ast})^{\ast},\quad 
\widetilde{f}_{i}^{\ast}(b)=\widetilde{f}_{i}(b^{\ast})^{\ast},\quad
\varepsilon_{i}^{\ast}(b)=\varepsilon_{i}(b^{\ast})\quad\text{and }\varphi
_{i}^{\ast}(b)=b^{\ast}  \label{*crystal}
\end{equation}
we obtain another crystal structure on $B_{e}(\infty)$ (see \cite{Kas}).

Let $i\in\mathbb{Z}/e\mathbb{Z}$ and write $B_{i}$ for the crystal with set
of vertices $\{b_{i}(k)\mid k\in\mathbb{Z}\}$ and such that 
\begin{align*}
\mathrm{wt(}b_{i}(k)) & =k\alpha_{i},\quad\varepsilon_{j}(b_{i}(k))=\left\{ 
\begin{array}{c}
-k\text{ if }i=j \\ 
-\infty\text{ if }i\neq j
\end{array}
\right. ,\quad\varphi_{j}(b_{i}(k))=\left\{ 
\begin{array}{c}
k\text{ if }i=j \\ 
-\infty\text{ if }i\neq j
\end{array}
\right. \\
\widetilde{e}_{j}b_{i}(k) & =\left\{ 
\begin{array}{l}
b_{i}(k+1)\text{ if }i=j \\ 
0\text{ if }i\neq j
\end{array}
\right. \quad\text{and }\quad\widetilde{f}_{j}b_{i}(k)=\left\{ 
\begin{array}{l}
b_{i}(k-1)\text{ if }i=j \\ 
0\text{ if }i\neq j
\end{array}
\right. .
\end{align*}
Set $b_{i}=b_{i}(0).$

\noindent Recall the action of $\widetilde{e}_{i}$ and $\widetilde{f}_{i}$
on the tensor product $B\otimes B^{\prime}=\{b\otimes b^{\prime}\mid b\in
B,b^{\prime}\in B^{\prime}\}$ of the crystals $B$ and $B^{\prime}$ :

\begin{align}
\widetilde{f_{i}}(u\otimes v)& =\left\{ 
\begin{tabular}{c}
$\widetilde{f}_{i}(u)\otimes v$ if $\varphi _{i}(u)>\varepsilon _{i}(v),$ \\ 
$u\otimes \widetilde{f}_{i}(v)$ if $\varphi _{i}(u)\leq \varepsilon _{i}(v),$
\end{tabular}
\ \right.  \label{TENS1} \\
& \text{and}  \notag \\
\widetilde{e_{i}}(u\otimes v)& =\left\{ 
\begin{tabular}{c}
$u\otimes \widetilde{e_{i}}(v)$ if $\varphi _{i}(u)<\varepsilon _{i}(v),$ \\ 
$\widetilde{e_{i}}(u)\otimes v$ if $\varphi _{i}(u)\geq \varepsilon _{i}(v).$
\end{tabular}
\ \right.  \label{TENS2}
\end{align}
The embedding of crystals $\theta _{i}:B_{e}(\infty )\hookrightarrow
B_{e}(\infty )\otimes B_{i}$ which sends the highest weight vertex $b_{ 
\boldsymbol{\emptyset}}$ of $B_{e}(\infty )$ on $b_{\boldsymbol{\emptyset}
}\otimes B_{i}$ permits to compute the action of the operators $\widetilde{e}
_{i}^{\ast }$ and $\widetilde{f}_{i}^{\ast }$ at least on a theoretical
point of view.

\begin{proposition}
\label{Prop_ac*}(see Proposition 8.1 in \cite{Kas}) Consider $b\in
B_{e}(\infty )$ and set $\varepsilon _{i}^{\ast }(b)=m$.\ Then we have

\begin{enumerate}
\item $\theta _{i}(b)=(\widetilde{e}_{i}^{\ast })^{m}b\otimes \widetilde{f}
_{i}^{m}b_{i},$

\item $\theta _{i}(\widetilde{f}_{i}^{\ast }b)=(\widetilde{e}_{i}^{\ast
})^{m}b\otimes \widetilde{f}_{i}^{m+1}b_{i}$ and

\item $\theta _{i}(\widetilde{e}_{i}^{\ast }b)=(\widetilde{e}_{i}^{\ast
})^{m}b\otimes \widetilde{f}_{i}^{m-1}b_{i}$ if $m>0$ and $\theta _{i}( 
\widetilde{e}_{i}^{\ast }b)=0$ if $m=0.$
\end{enumerate}
\end{proposition}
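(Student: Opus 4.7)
The plan is to reduce the proposition to part (1), then establish (1) by combining a tensor product analysis with one intertwining identity between $\theta_i$ and the starred operators.

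I observe first that (2) and (3) are formal consequences of (1). Indeed, if $\widetilde{f}_i^*b\neq 0$ then $\varepsilon_i^*(\widetilde{f}_i^*b)=m+1$, so applying (1) to $\widetilde{f}_i^*b$ yields
\[
\theta_i(\widetilde{f}_i^*b)=(\widetilde{e}_i^*)^{m+1}(\widetilde{f}_i^*b)\otimes \widetilde{f}_i^{m+1}b_i=(\widetilde{e}_i^*)^m b\otimes \widetilde{f}_i^{m+1}b_i,
\]
which is (2); similarly, when $m>0$, $\varepsilon_i^*(\widetilde{e}_i^*b)=m-1$ and (1) gives (3); the case $m=0$ of (3) is tautological since then $\widetilde{e}_i^*b=0$.

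For (1), I would first use the tensor product rules (\ref{TENS1}), (\ref{TENS2}) together with $\varepsilon_j(b_i(-k))=-\infty$ for $j\neq i$ to show that every vertex in the image of $\theta_i$ has the form $c\otimes \widetilde{f}_i^k b_i$ for a unique $c\in B_e(\infty)$ and a unique $k\geq 0$. The starting point $\theta_i(b_{\boldsymbol{\emptyset}})=b_{\boldsymbol{\emptyset}}\otimes b_i$ has this shape, and each operator $\widetilde{e}_j,\widetilde{f}_j$ either acts on the left factor or moves along the $B_i$-string $\{b_i(-k)\}_{k\geq 0}$ on the right; hence the shape is preserved under all crystal operators and thus throughout the image of $\theta_i$.

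To identify $c$ and $k$, the key ingredient is the intertwining relation
\[
\theta_i\circ\widetilde{f}_i^*=(\mathrm{id}_{B_e(\infty)}\otimes\widetilde{f}_i)\circ\theta_i,
\]
which says that $\widetilde{f}_i^*$ acts on the image of $\theta_i$ simply by incrementing the power of $\widetilde{f}_i$ on the right factor. Granting this, I argue by induction on $m=\varepsilon_i^*(b)$: when $m=0$, the intertwining read in reverse together with the injectivity of $\theta_i$ forces $k=0$, whence $\theta_i(b)=b\otimes b_i$; when $m>0$, set $b'=\widetilde{e}_i^*b$, apply the inductive hypothesis to $b'$ (for which $\varepsilon_i^*(b')=m-1$), and use $b=\widetilde{f}_i^*b'$ combined with the intertwining to conclude the general form. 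The main obstacle is proving the intertwining relation itself: it is not a formal consequence of $\theta_i$ being a strict crystal morphism for the unstarred structure, since the starred operators define a different crystal structure on $B_e(\infty)$. A complete argument must descend to the quantum group, where $\theta_i$ is induced by a comultiplication-type map on $\mathcal{U}_v^-$ and $\widetilde{f}_i^*$ corresponds to multiplication by $f_i$ from the opposite side relative to $\widetilde{f}_i$; this is precisely the content of Kashiwara's original proof in \cite{Kas}, which I would invoke rather than reproduce.
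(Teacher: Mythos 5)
The paper offers no proof of this proposition: it is quoted verbatim from Proposition 8.1 of \cite{Kas}, so there is no internal argument to compare yours against. Your reductions of (2) and (3) to (1) are correct (using $\varepsilon_i^{\ast}(\widetilde{f}_i^{\ast}b)=m+1$, $\varepsilon_i^{\ast}(\widetilde{e}_i^{\ast}b)=m-1$ and the cancellation $(\widetilde{e}_i^{\ast})^{m+1}\widetilde{f}_i^{\ast}b=(\widetilde{e}_i^{\ast})^{m}b$), the tensor-product analysis showing the image of $\theta_i$ consists of vertices $c\otimes\widetilde{f}_i^{k}b_i$ with $k\geq 0$ is sound, and you correctly isolate the one non-formal ingredient, the intertwining $\theta_i\circ\widetilde{f}_i^{\ast}=(\mathrm{id}\otimes\widetilde{f}_i)\circ\theta_i$, which indeed cannot be extracted from the crystal axioms alone and must come from Kashiwara's construction of $\theta_i$ via the twisted comultiplication on $\mathcal{U}_v^-$. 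Since you invoke \cite{Kas} precisely for that step, your treatment is consistent with, and somewhat more detailed than, the paper's.
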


\begin{remark}
\label{rem}\ 

\begin{enumerate}
\item By \ref{subsec_twoactions}, $\Psi _{e}$ is equipped with two crystal
structures.\ One is obtained from the action of the crystal operators $
\widehat{f}_{i},i\in \mathbb{Z}/e\mathbb{Z}$ and the other one is related to
the operators $\widetilde{f}_{i},i\in \mathbb{Z}/e\mathbb{Z}$ and yields the
Kashiwara crystal graph structure $B_{e}(\infty )$ on $\Psi _{e}$. We shall
see in Section \ref{Sec_hat=star}, that the actions of the operators $
\widehat{f}_{i}$ and $\widetilde{f}_{i}^{\ast }$ with $i\in \mathbb{Z}/e\mathbb{
Z }$ coincide.

\item Observe that Proposition \ref{Prop_ac*} does not provide an efficient
procedure for computing the involution $\ast $. Indeed, in order to obtain $
\theta _{i}(b)$, we have first to determine a path from $b$ to the highest
weight vertex of $B_{e}(\infty ).$ Moreover, computing a section of the
embedding $\theta _{i}$ is difficult in general.
\end{enumerate}
\end{remark}

\subsection{Crystals of highest weight ${\mathcal{U}_{v}}$-modules}

\label{subsec_FLOW}

Let $l\in\mathbb{N}$ and consider ${\mathbf{v}}=(v_{0},\ldots,v_{l-1})\in 
\mathbb{Z}^{l}$. ${\mathbf{v}}$ is called a multicharge and $l$ is by
definition the level of ${\mathbf{v}}$. One can then associate to $\mathbf{v}
$ the abstract ${\mathcal{U}_{v}}$-irreducible module $V_{e}(\Lambda _{ 
\mathbf{v}})$ with highest weight $\Lambda_{\mathbf{v}}=\Lambda _{v_{0}( 
\mathrm{mod}e)}+\cdots+\Lambda_{v_{l-1}(\mathrm{mod}e)}$. There exist
distinct realizations of $V_{e}(\Lambda_{\mathbf{v}})$ as an irreducible
component of a Fock space $\mathfrak{F}_{e}^{{\mathbf{v}}}$ whose structure
depends on $\mathbf{v}$.\ As a $\mathbb{C(}v)$-vector space, the Fock space $
\mathfrak{F}_{e}^{{\mathbf{v}}}$ of level $l$ admits the set of all $l$
-partitions as a natural basis. Namely the underlying vector space is 
\begin{equation*}
\mathfrak{F}_{e}=\bigoplus_{n\geq0}\bigoplus_{{{\boldsymbol{\lambda}}}
\in\Pi_{l,n}}\mathbb{C}(v)\mathbf{{\boldsymbol{\lambda}}}
\end{equation*}
where $\Pi_{l,n}$ is the set of $l$-partitions with rank $n$. Consider $
\mathfrak{v=(v}_{0},\ldots,\mathfrak{v}_{l-1})\in(\mathbb{Z}/e\mathbb{Z)}
^{l} $.\ We write $\mathbf{v\in}\mathfrak{v}$ when $v_{c}\in\mathfrak{v}_{c} 
$ for any $c=0,\ldots,l-1$.\ As ${\mathcal{U}_{v}}$-modules, the Fock spaces 
$\mathfrak{F}_{e}^{\mathbf{v}},$ $\mathbf{v\in}\mathfrak{v}$ are all
isomorphic but with distinct actions for ${\mathcal{U}_{v}}$. For each of
these actions, the empty $l$-partition $\boldsymbol{\emptyset}=(\emptyset
,\ldots,\emptyset)$ is a highest weight vector of highest weight $\Lambda_{{
\mathbf{v}}}$. We denote by $V_{e}(${$\mathbf{v}$}$)$ the irreducible
component with highest weight vector $\boldsymbol{\emptyset}$ in $\mathfrak{
F }_{e}^{{\mathbf{v}}}$. The modules $V_{e}(${$\mathbf{v}$}$)$ when {$
\mathbf{v }$} runs over $\mathfrak{v}$ are all isomorphic to the abstract
module $V_{e}(\Lambda_{\mathbf{v}})$. However, the actions of the Chevalley
operators on these modules do not coincide in general.

The module $\mathfrak{F}_{e}^{{\mathbf{v}}}$ admits a crystal graph $B_{e}^{{
\mathbf{v}}}$ labelled by $l$-partitions.\ Let us now recall the crystal
structures on $B_{e}^{{\mathbf{v}}}$ and $B_{e}({\mathbf{v)}}$ the crystal
associated to $V_{e}(${$\mathbf{v}$}$)$. We will omit the description of the 
${\mathcal{U}_{v}}$-module structures on $\mathfrak{F}_{e}^{{\mathbf{v}}}$
and $V_{e}(${$\mathbf{v}$}$)$ which are not needed in our proofs (see \cite
{jim} for a complete exposition).

\noindent Let ${\boldsymbol{\lambda}}=(\lambda^0,...,\lambda^{l-1})$ be an $l$-partition (identified with
its Young diagram). Then, the nodes of ${\boldsymbol{\lambda}}$ are the
triplets $\gamma=(a,b,c)$ where $c\in\{0,\ldots,l-1\}$ and $a,b$ are
respectively the row and column indices of the node $\gamma$ in $\lambda
^{c}.$ The content of $\gamma$ is the integer $c\left( \gamma\right)
=b-a+v_{c}$ and the residue $\mathrm{res(}\gamma)$ of $\gamma$ is the
element of $\mathbb{Z}/e\mathbb{Z}$ such that 
\begin{equation}
\mathrm{res}(\gamma)\equiv c(\gamma)(\mathrm{mod}e).  \label{res}
\end{equation}
We say that $\gamma$ is an $i$-node of ${\boldsymbol{\lambda}}$ when $
\mathrm{res}(\gamma)\equiv i(\mathrm{mod}e).$ This node is removable when $
\gamma=(a,b,c)\in{\boldsymbol{\lambda}}$ and ${\boldsymbol{\lambda}}
\backslash\{\gamma\}$ is an $l$-partition. Similarly $\gamma$ is addable
when $\gamma=(a,b,c)\notin{\boldsymbol{\lambda}}$ and ${\boldsymbol{\lambda}}
\cup\{\gamma\}$ is an $l$-partition.

\noindent The structure of crystal on $B_{e}^{{\mathbf{v}}}$ (and in fact,
the structure of $\ {\mathcal{U}_{v}}$-module on $\mathfrak{F}_{e}^{\mathbf{
v }}$ itself) is conditioned by the total order $\prec_{{\mathbf{v}}}$ on
the set of addable and removable $i$-nodes of the multipartitions. Consider $
\gamma _{1}=(a_{1},b_{1},c_{1})$ and $\gamma_{2}=(a_{2},b_{2},c_{2})$ two $i 
$-nodes in ${\boldsymbol{\lambda}}$. We define the order $\prec_{{\mathbf{v}
}}$ by setting

\begin{equation*}
\gamma_{1}\prec_{{\mathbf{v}}}\gamma_{2}\Longleftrightarrow\left\{ 
\begin{array}{l}
c(\gamma_{1})<c(\gamma_{2})\text{ or} \\ 
c(\gamma_{1})=c(\gamma_{2})\text{ and }c_{1}>c_{2}.
\end{array}
\right.
\end{equation*}
Starting from any $l$-partition ${\boldsymbol{\lambda}}$, consider its set
of addable and removable $i$-nodes. Let $w_{i}$ be the word obtained first
by writing the addable and removable $i$-nodes of ${\boldsymbol{\lambda}}$
in {increasing} order with respect to $\prec_{{\mathbf{v}}}$ next by
encoding each addable $i$-node by the letter $A$ and each removable $i$-node
by the letter $R$.\ Write $\widetilde{w}_{i}=A^{r}R^{s}$ for the word
derived from $w_{i}$ by deleting as many of the factors $RA$ as possible. If 
$r>0,$ let $\gamma$ be the rightmost addable $i$-node in $\widetilde{w}_{i}$.\ When $\widetilde{w}_{i}\neq\emptyset$, the node $\gamma$ is called the
good $i$-node.

\begin{proposition}
\label{prop_crystal_muli}The crystal graph $B_{e}^{{\mathbf{v}}}$ of $
\mathfrak{F}_{e}^{{\mathbf{v}}}$ is the graph with

\begin{enumerate}
\item vertices: the $l$-partitions,

\item edges: $\displaystyle{{\boldsymbol{\lambda}}}\overset{i}{\rightarrow }{{\boldsymbol{\mu}}}$ if and only if ${\boldsymbol{\mu}}$ is obtained by
adding to ${\boldsymbol{\lambda}}$ its good $i$-node.

\item for any $i\in \mathbb{Z}/e\mathbb{Z}$, $\varepsilon _{i}({{\boldsymbol{
\lambda})=s}}$ and $\varphi _{i}({{\boldsymbol{\lambda})=r.}}$
\end{enumerate}
\end{proposition}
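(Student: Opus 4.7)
The plan is to reduce the level $l$ statement to the well-known level $1$ case by exhibiting $\mathfrak{F}_e^{\mathbf{v}}$ as a tensor product of level $1$ Fock spaces at the crystal level, and then pushing the description of the Kashiwara operators through the tensor product rule.

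First, I would recall the classical level $1$ result (due to Misra--Miwa, see e.g. \cite{jim}): for any $v_0 \in \mathbb{Z}$, the level $1$ Fock space $\mathfrak{F}_e^{v_0}$ admits a $\mathcal{U}_v$-action whose crystal $B_e^{v_0}$ has the partitions as vertices, and the Kashiwara operator $\widetilde{f}_i$ acts by adding the good addable $i$-node obtained by reading the sequence $w_i$ of addable/removable $i$-nodes in increasing content order, performing $AR$-cancellation to obtain $\widetilde{w}_i = A^r R^s$, and adding the rightmost unpaired $A$. In this case, the formulas $\varepsilon_i(\lambda) = s$ and $\varphi_i(\lambda) = r$ follow directly from the weight formula for a partition together with the fact that residues along the rim alternate appropriately.

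Second, I would use the crystal isomorphism $B_e^{\mathbf{v}} \simeq B_e^{v_0} \otimes B_e^{v_1} \otimes \cdots \otimes B_e^{v_{l-1}}$, under which an $l$-partition $\boldsymbol{\lambda} = (\lambda^0, \ldots, \lambda^{l-1})$ corresponds to the tensor $\lambda^0 \otimes \lambda^1 \otimes \cdots \otimes \lambda^{l-1}$. This is the crystal shadow of the embedding of Fock spaces at the $\mathcal{U}_v$ level; the underlying module action is a twisted one (coming from the Uglov or Jimbo--Misra--Miwa--Okado construction), but the associated crystal is the naive tensor product. Iterating the tensor product rules (\ref{TENS1}) and (\ref{TENS2}), one sees that computing $\widetilde{f}_i$ and $\widetilde{e}_i$ on $\boldsymbol{\lambda}$ amounts to performing $AR$-cancellation on the \emph{concatenation} of the level $1$ words $w_i^{(0)}, w_i^{(1)}, \ldots, w_i^{(l-1)}$, read in an order determined by the convention fixed on the tensor product.

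Third, the key combinatorial verification is that this concatenated word coincides with the word $w_i$ associated to $\boldsymbol{\lambda}$ via the total order $\prec_{\mathbf{v}}$. Within a single component, nodes are listed by increasing content, which matches the level $1$ reading. When two nodes have the same content but belong to different components, the rule $c(\gamma_1) = c(\gamma_2) \Rightarrow (\gamma_1 \prec_{\mathbf{v}} \gamma_2 \Leftrightarrow c_1 > c_2)$ places the higher-indexed component to the left, which is exactly the ordering making the concatenation $w_i^{(l-1)} w_i^{(l-2)} \cdots w_i^{(0)}$ (or the opposite, depending on the convention for tensor products) match $w_i$. Once this matching is established, the description of the edges is immediate, and the identities $\varepsilon_i(\boldsymbol{\lambda}) = s$, $\varphi_i(\boldsymbol{\lambda}) = r$ follow from the tensor product formulas $\varepsilon_i(u \otimes v) = \max(\varepsilon_i(u), \varepsilon_i(v) - \langle \mathrm{wt}(u), \alpha_i^\vee \rangle)$ together with the level $1$ formulas.

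The main obstacle is the third step: carefully checking that the convention fixed on $\prec_{\mathbf{v}}$ for equal-content $i$-nodes is the one compatible with the tensor product rule in the chosen factor order. This is a purely combinatorial bookkeeping argument, but it is the place where all the sign/direction conventions collide, and where one must be especially careful because the two choices (increasing versus decreasing component index) give rise to non-isomorphic $\mathcal{U}_v$-module structures on the same Fock space.
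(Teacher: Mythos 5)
First, a point of comparison: the paper does not prove Proposition \ref{prop_crystal_muli} at all. It is recalled from the literature (Jimbo--Misra--Miwa--Okado \cite{jim}, Uglov \cite{U}), and the authors explicitly state that they omit the $\mathcal{U}_{v}$-module structure on $\mathfrak{F}_{e}^{\mathbf{v}}$ because it is not needed for their arguments. So any proof you supply is necessarily ``different from the paper's''; the real question is whether your reduction is sound, and unfortunately it is not.

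The gap is in your third step, and it is not a resolvable matter of sign or direction conventions. The order $\prec_{\mathbf{v}}$ is \emph{content-dominant}: it compares the integers $c(\gamma)=b-a+v_{c}$ first and uses the component index only to break ties. Hence the word $w_{i}$ genuinely \emph{interleaves} addable and removable $i$-nodes coming from different components of $\boldsymbol{\lambda}$: a node of $\lambda^{c}$ of small content precedes a node of $\lambda^{c'}$ of large content no matter how $c$ and $c'$ compare. No concatenation $w_{i}^{(\sigma(0))}w_{i}^{(\sigma(1))}\cdots w_{i}^{(\sigma(l-1))}$ of the full component words reproduces this interleaving, and since the $RA$-cancellation is sensitive to the relative order of the letters, the two readings produce different reduced words $A^{r}R^{s}$, hence different operators $\widetilde{e}_{i},\widetilde{f}_{i}$ and different values of $\varepsilon_{i},\varphi_{i}$. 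What your tensor decomposition $B_{e}^{v_{0}}\otimes\cdots\otimes B_{e}^{v_{l-1}}$ computes (for either choice of factor order) is the \emph{component-dominant} crystal structure on $l$-partitions, whose connected component at $\boldsymbol{\emptyset}$ is the set $\Phi_{e}^{K}(\mathbf{v})$ of Kleshchev multipartitions; the structure asserted in the proposition has connected component $\Phi_{e}(\mathbf{v})$, the FLOTW multipartitions \cite{FLOTW}. The paper itself insists that these two structures on the same underlying set do not coincide: the crystal isomorphism $\Gamma:\Phi_{e}(\mathbf{v})\rightarrow\Phi_{e}^{K}(\mathbf{v})$ of \cite{JL} is nontrivial, whereas your step 3 would force $\Gamma$ to be the identity. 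A correct argument cannot factor the Fock space by components; one must either work directly with the JMMO/Uglov action of $f_{i}$ on $\mathfrak{F}_{e}^{\mathbf{v}}$, whose matrix coefficients are defined precisely by counting nodes with respect to $\prec_{\mathbf{v}}$ (after which the signature rule follows from Kashiwara's general theory), or use the semi-infinite wedge realization, in which the relevant tensor factorization is indexed by contents rather than by components.
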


Since $V_{e}(${$\mathbf{v}$}$)$ is the irreducible module with highest
weight vector $\boldsymbol{\emptyset}$ in $\mathfrak{F}_{e}^{{\mathbf{v}}} 
$, its crystal graph $B_{e}(${$\mathbf{v}$}$)$ can be realized as the
connected component of highest weight vertex $\boldsymbol{\emptyset}$ in $
B_{e}^{\boldsymbol{s}}$. The vertices of $B_{e}(${$\mathbf{v}$}$)$ are
labelled by $l$-partitions called Uglov $l$-partitions associated to {$
\mathbf{v}$.}

\noindent Set 
\begin{equation}
\mathcal{V}_{l}=\{\mathbf{v}=(v_{0},\ldots,v_{l-1})\in\mathbb{Z}^{l}\mid
v_{0}\leq\cdots\leq v_{l-1}\text{ and }v_{l-1}-v_{0}<e\}.  \label{vFLOTW}
\end{equation}

\begin{definition}
\label{f} Assume that ${\mathbf{v}}\in \mathcal{V}_{l}$.\ The $l$-partition $
{\boldsymbol{\lambda}}=(\lambda ^{0},\ldots ,\lambda ^{l-1})$ is a FLOTW $l$
-partition associated to $\mathbf{v}$ if it satisfies the two following
conditions :

\begin{enumerate}
\item for all $i=1,2,\cdots $, we have : 
\begin{equation*}
\lambda _{i}^{j}\geq {\lambda _{i+v_{j+1}-v_{j}}^{j+1}}\text{ for all }
j=0,\ldots ,l-2\text{ and }\lambda _{i}^{l-1}\geq {\lambda
_{i+e+v_{0}-v_{l-1}}^{0}}.
\end{equation*}

\item for all $k>0$, among the residues appearing in ${\boldsymbol{\lambda}} 
$ at the right ends of the length $k$ rows , at least one element of $
\{0,1,\cdots ,e-1\}$ does not occur.
\end{enumerate}
\end{definition}

The following result has been obtained by Jimbo, Misra,  Miwa and  Okado
 \cite{jim} but the presentation  we adopt here comes from 
\cite{FLOTW}. 

\begin{proposition}
When ${\mathbf{v}}\in \mathcal{V}_{l}$, the set of vertices of $
B_{e}({\mathbf{v)}}$ coincides with the set of FLOTW $l$-partitions
associated to ${\mathbf{v}}$.
\end{proposition}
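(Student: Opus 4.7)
The plan is to identify the connected component of $\boldsymbol{\emptyset}$ in $B_e^{\mathbf{v}}$ with the set of FLOTW $l$-partitions by showing, on the one hand, that FLOTW is stable under the Kashiwara operators, and on the other hand that every FLOTW $l$-partition can be reduced to $\boldsymbol{\emptyset}$. Since $B_e(\mathbf{v})$ is by definition the connected component of $\boldsymbol{\emptyset}$ in the Fock crystal, these two facts together force equality.

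More precisely, I would organize the argument in three steps. First, observe that $\boldsymbol{\emptyset}$ is trivially a FLOTW $l$-partition, so it suffices to prove that the set of FLOTW $l$-partitions is stable under every $\tilde{e}_i$ and $\tilde{f}_i$ whenever the result is non-zero. For $\tilde{f}_i$: given a FLOTW $\boldsymbol{\lambda}$, one determines the good addable $i$-node $\gamma$ of $\boldsymbol{\lambda}$ from the word $\widetilde{w}_i = A^rR^s$ (using the order $\prec_{\mathbf{v}}$ adapted to the range $v_0 \leq \dots \leq v_{l-1} < v_0+e$), and then checks that adjoining $\gamma$ to $\boldsymbol{\lambda}$ preserves the nested comparison of rows in condition~(1) and the aperiodicity of row-end residues in condition~(2). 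The key observation for condition~(1) is that the order $\prec_{\mathbf{v}}$ was tailored precisely so that a violation of the nested inequality would force an $RA$ pair in $w_i$ blocking $\gamma$ from being good. For condition~(2), one argues that if adding $\gamma$ created a full periodic pattern of residues at the right ends of rows of some fixed length, then $\boldsymbol{\lambda}$ already contained an addable $i$-node above $\gamma$ and a removable $i$-node below, again contradicting that $\gamma$ is the rightmost surviving $A$ in $\widetilde{w}_i$. The argument for $\tilde{e}_i$ is entirely analogous.

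Second, I would show that any non-empty FLOTW $\boldsymbol{\lambda}$ admits some $i$ with $\tilde{e}_i\boldsymbol{\lambda} \neq \boldsymbol{0}$: pick a row of maximal length in $\boldsymbol{\lambda}$ and look at its right-end residue; the FLOTW aperiodicity condition~(2) applied to this length forces the existence of a removable node whose reduced word contains a surviving $R$. Iterating then reduces the total rank and terminates at $\boldsymbol{\emptyset}$. Combined with the stability under $\tilde{e}_i$ proved above, this shows that every FLOTW partition lies in the connected component of $\boldsymbol{\emptyset}$, i.e.\ in $B_e(\mathbf{v})$. The stability under $\tilde{f}_i$ then gives the reverse inclusion, since starting from $\boldsymbol{\emptyset}$ every element of $B_e(\mathbf{v})$ is obtained by applying a sequence of $\tilde{f}_i$'s and at each step we remain FLOTW.

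The main obstacle will be the careful combinatorial analysis of how the reduction word $\widetilde{w}_i = A^r R^s$ interacts with the two FLOTW conditions, especially condition~(2). The interplay between the cyclic order $\prec_{\mathbf{v}}$ (which depends crucially on the constraint $v_{l-1}-v_0<e$ defining $\mathcal{V}_l$) and the aperiodicity of residues at row ends is delicate: one has to rule out that a good addable or removable $i$-node is hidden inside a would-be periodic configuration. Once this bookkeeping is in place, the proof reduces to the verifications sketched above.
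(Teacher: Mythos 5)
First, a point of comparison: the paper does not prove this proposition at all --- it is imported from Jimbo--Misra--Miwa--Okado \cite{jim} in the formulation of \cite{FLOTW} --- so your proposal is being measured against the literature rather than against an argument in the text. Your overall architecture is the standard and correct one: $\boldsymbol{\emptyset}$ is FLOTW, the FLOTW set is stable under $\widetilde{e}_{i}$ and $\widetilde{f}_{i}$ whenever these are non-zero, and every non-empty FLOTW $l$-partition admits some $i$ with $\widetilde{e}_{i}\boldsymbol{\lambda}\neq 0$; these three facts do force the FLOTW set to coincide with the connected component of $\boldsymbol{\emptyset}$ in $B_{e}^{\mathbf{v}}$, which is $B_{e}(\mathbf{v})$.

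The gap lies in the mechanism you propose for the two hard verifications. For condition (2) you claim a violation would produce ``an addable $i$-node above $\gamma$ and a removable $i$-node below, contradicting that $\gamma$ is the rightmost surviving $A$.'' But the reduced word $\widetilde{w}_{i}=A^{r}R^{s}$ is perfectly compatible with further $A$'s to the left of $\gamma$ and $R$'s to its right; that configuration contradicts nothing. The contradiction one actually needs --- and the one that does work for condition (1) --- is an addable $i$-node $\gamma'$ lying strictly to the \emph{right} of $\gamma$ in $w_{i}$ that cannot be cancelled, because any removable $i$-node pairing with it in the non-crossing $RA$-matching would have to sit strictly between $\gamma$ and $\gamma'$ for $\prec_{\mathbf{v}}$, and the constraint $v_{l-1}-v_{0}<e$ leaves no room for such a node: the offending rows in adjacent components yield equal contents with adjacent component indices, and the wrap-around rows yield contents differing by exactly $e$ with the extreme component indices $l-1$ and $0$. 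You need to exhibit such a $\gamma'$ for the condition (2) violation as well, which is the genuinely delicate step and is not what your sketch describes. Similarly, in your second step, ``pick a row of maximal length and look at its right-end residue'' is insufficient: the removable node you exhibit may be cancelled by an addable $i$-node of larger content, so condition (2) must be used to \emph{select} the residue $i$ (for instance $i$ occurring as an end residue of a maximal-length row while a suitable neighbouring residue does not) so that the corresponding $R$ provably survives in $\widetilde{w}_{i}$. With these corrections the plan goes through, but as written the two central contradictions are not established.
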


Let us denote by $\Phi_{e}({\mathbf{v)}}$ the set of FLOTW $l$-partitions
associated to ${\mathbf{v.\;}}$

\bigskip

Consider $\mathbf{v}\in\mathcal{V}_{l}$ and ${{\boldsymbol{\lambda}}}\in
\Phi_{e}(\mathbf{v})$. We associate to each non zero part $\lambda_{i}^{c}$
of ${{\boldsymbol{\lambda}}}$ the segment 
\begin{equation}
\lbrack(1-i+v_{c})(\mathrm{mod}e),(2-i+v_{c})(\mathrm{mod}e),\ldots
,(\lambda_{i}^{c}-i+v_{c})(\mathrm{mod}e)].  \label{f_v}
\end{equation}
The multisegment $f_{\mathbf{v}}({{\boldsymbol{\lambda}}})$ is then the
formal sum of all the segments associated to the parts $\lambda_{i}^{c}$ of $
{{\boldsymbol{\lambda}}}$. Since $f_{\mathbf{v}}({{\boldsymbol{\lambda}}}) $
is aperiodic by $(2)$ of Definition \ref{f}, the map: 
\begin{equation*}
f_{\mathbf{v}}:B_{e}(\mathbf{v})\rightarrow\Psi_{e}
\end{equation*}
is well-defined.

\begin{example}
Let $e=4$, we consider the FLOTW bipartition $(2.1,1)$ associated to $
\mathbf{v}=(0,1)$ then 
\begin{equation*}
f_{\mathbf{v}}(2.1,1)=[0,1]+[3]+[1].
\end{equation*}
Let $\mathbf{v}=(0,1,3)$ and consider the FLOTW $3$-partition $(2,1,1)$. We
have : 
\begin{equation*}
f_{\mathbf{v}}(2,1,1)=[0,1]+[1]+[3].
\end{equation*}
\end{example}

Let $T_{\Lambda}=\{t_{\Lambda}\}$ be the crystal defined by $\textrm{wt}
(t_{\Lambda_{\mathbf{v}}})=\Lambda$, $\epsilon_{i}(t_{\Lambda_{\mathbf{v}
}})=\varphi_{i}(t_{\Lambda_{\mathbf{v}}})=-\infty$ and $\widetilde{e}
_{i}t_{\Lambda_{\mathbf{v}}}=\widetilde{f}_{i}t_{\Lambda_{\mathbf{v}}}=0$.
We have a unique crystal embedding $B_{e}(\mathbf{v})\hookrightarrow
B_{e}(\infty)\otimes T_{\Lambda}.\;$The following theorem has been
established in \cite{AJL}.

\begin{theorem}
\label{Th_fv}For any $\mathbf{v}\in \mathcal{V}_{l}$, the map $f_{\mathbf{v}
} $ coincides with the unique crystal embedding $B_{e}(\mathbf{v}
)\hookrightarrow B_{e}(\infty )\otimes T_{\Lambda }$.
\end{theorem}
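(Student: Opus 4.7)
The plan is to use uniqueness of the crystal embedding $B_e(\mathbf{v}) \hookrightarrow B_e(\infty) \otimes T_\Lambda$: once this embedding is known to be unique, any map from $B_e(\mathbf{v})$ to $B_e(\infty) \otimes T_\Lambda$ sending $\boldsymbol{\emptyset}$ to $\boldsymbol{\emptyset} \otimes t_\Lambda$ and commuting with every Kashiwara operator $\widetilde{f}_i$, $i \in \mathbb{Z}/e\mathbb{Z}$, must coincide with it. I would therefore reduce the theorem to verifying these two properties for the assignment $\boldsymbol{\lambda} \mapsto f_{\mathbf{v}}(\boldsymbol{\lambda}) \otimes t_\Lambda$.

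The highest weight condition is immediate since $f_{\mathbf{v}}(\boldsymbol{\emptyset})$ is the empty multisegment, so the image is the unique highest weight vertex of weight $\Lambda_{\mathbf{v}}$. For the commutation with $\widetilde{f}_i$, observe that $\varepsilon_i(t_\Lambda) = -\infty$, so rule (\ref{TENS1}) degenerates to $\widetilde{f}_i(\psi \otimes t_\Lambda) = (\widetilde{f}_i \psi) \otimes t_\Lambda$, and the task becomes to prove
\[
f_{\mathbf{v}}(\widetilde{f}_i \boldsymbol{\lambda}) = \widetilde{f}_i\, f_{\mathbf{v}}(\boldsymbol{\lambda})
\]
for every FLOTW $l$-partition $\boldsymbol{\lambda}$ admitting a good $i$-node. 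By Proposition \ref{prop_crystal_muli}, the left hand side adds the good $i$-node to $\boldsymbol{\lambda}$; if that node sits at position $(a, b, c)$, then the row $\lambda_a^c$ grows from length $b - 1$ to $b$, and by the segment assignment (\ref{f_v}) its associated segment changes from $(b - 1; i - 1]$ to $(b; i]$, or is freshly created as $(1; i]$ when $b = 1$. Comparing with Theorem \ref{LTV-2}, this is exactly the action of $\widetilde{f}_i$ on multisegments, provided that the critical length $l_0$ appearing there equals $b$.

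The core of the argument is therefore the combinatorial identity $l_0 = b$, which must reconcile two a priori different cancellation procedures: the $RA$-reduction of the word $w_i$ that locates the good $i$-node, and the telescoping computation of $\min_{l > 0} S_{l, i}$ used in Theorem \ref{LTV-2}. The plan is to read $w_i$ in successive blocks of constant row length $k$, working downward from the largest. At each level $k$, a row of length $k$ with tail residue $i - 1$ (resp.\ $i$) contributes an $A$ (resp.\ $R$) to $w_i$ and a $+1$ to $m_{(k; i-1]}$ (resp.\ $m_{(k; i]}$). One then checks that within each block, the FLOTW conditions of Definition \ref{f} together with the order $\prec_{\mathbf{v}}$ group the addables and removables so that the $RA$-cancellation at level $k$ computes exactly the sign of $m_{(k; i-1]} - m_{(k; i]}$, while the running total of unmatched $A$s carried down from levels $> k$ equals $S_{k+1, i}$. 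The rightmost surviving $A$ in $\widetilde{w}_i$ then sits at the smallest level $b$ realising $\min_{l > 0} S_{l,i}$, which is exactly $l_0$.

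The hard part will be this last bookkeeping step: verifying that, within rows of a common length $k$, the ordering on parts induced by $\prec_{\mathbf{v}}$ combined with condition $(1)$ of Definition \ref{f} (which controls how rows in consecutive components of $\boldsymbol{\lambda}$ relate to each other) really forces additions and removals to cancel at the same level $k$ rather than straddle two levels, and that condition $(2)$ of Definition \ref{f} is what guarantees $f_{\mathbf{v}}(\boldsymbol{\lambda})$ is aperiodic so that the LTV machinery applies. Once this identity is secured, commutation with every $\widetilde{f}_i$ is established and uniqueness of the embedding gives the theorem; the compatibilities of weights, $\varepsilon_i$, $\varphi_i$ and of $\widetilde{e}_i$ then follow automatically.
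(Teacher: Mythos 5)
The paper itself does not prove this theorem: it is imported from \cite{AJL} (the text reads ``The following theorem has been established in \cite{AJL}''), so there is no internal proof to compare against. Your overall architecture --- use uniqueness of the embedding, observe that $\varepsilon_i(t_\Lambda)=-\infty$ degenerates rule (\ref{TENS1}), and reduce everything to the single identity $f_{\mathbf{v}}(\widetilde f_i\boldsymbol{\lambda})=\widetilde f_i\, f_{\mathbf{v}}(\boldsymbol{\lambda})$ --- is the right skeleton and is surely how any proof must go.

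However, the proposal stops exactly where the theorem begins. The identity $l_0=b$ is asserted together with a plan (``read $w_i$ in successive blocks of constant row length, working downward''), but as stated the plan does not obviously go through, for two concrete reasons. First, the two signature words are ordered by different statistics: the multipartition word $w_i$ is ordered by the integer content $b-a+v_c$ of the nodes (with the component index as tie-breaker), whereas the multisegment word of Lemma \ref{lem_util} is ordered by segment length, with all $R$'s preceding all $A$'s within a given length. A removable $i$-node at the end of a row of length $l$ has content $l-a+v_c$, which depends on the row index $a$ and on the charge $v_c$, so the nodes attached to rows of a common length are not a priori contiguous in $w_i$, and an $R$ of length $l$ need not precede an $A$ of the same length. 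Proving that the FLOTW inequalities of Definition \ref{f}(1), combined with $\prec_{\mathbf{v}}$, force the $RA$-reductions of these two differently ordered words to produce the same surviving rightmost $A$ is the actual mathematical content of the theorem, not a routine bookkeeping step. Second, the addable $i$-nodes lying in column $1$ (those that create a new part) contribute letters $A$ to the multipartition word but correspond to no segment of $f_{\mathbf{v}}(\boldsymbol{\lambda})$, hence have no counterpart in the LTV word; your decomposition into blocks indexed by row length does not say where these letters sit or why they do not disturb the cancellation, yet they are exactly the letters responsible for the case $l_0=1$. Until these two points are settled, the key identity --- and with it the theorem --- remains unproved.
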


\noindent According to Proposition 8.2 in \cite{Kas}, we have 
\begin{equation*}
f_{\mathbf{v}}(\Phi_{e}(\mathbf{v}))=\{\psi\in\Psi_{e}\mid\varepsilon_{i}(
\psi^{\ast})\leq r_{i}\text{ for any }i\in\mathbb{Z}/e\mathbb{Z\}}
\end{equation*}
where $r_{i}$ is the number of coordinates in $\mathbf{v}$ equal to $i$ and $
\psi^{\ast}$ is the image of $\psi$ under the Kashiwara involution of the
crystal $B_{e}(\infty).$

\noindent Given any $\psi\in\Psi_{e},$ write $\mathbf{v}(\psi)$ for the
element of $\mathcal{V}_{l}$ defined by the conditions 
\begin{equation}
r_{i}=\varepsilon_{i}(\psi^{\ast})=\varepsilon_{i}^{\ast}(\psi)\text{ for
any }i\in\mathbb{Z}/e\mathbb{Z}\text{.}  \label{v(m)}
\end{equation}
Then, by the previous considerations, there exists a unique $l$-partition 
\underline{$\boldsymbol{\lambda}$}$(\psi){=}$\linebreak${(\lambda}
^{0},\ldots,\lambda^{l-1}){\in\Phi}_{e}(\mathbf{v}(\psi))$ such that $f_{ 
\mathbf{v}(\psi)}(${\underline{$\boldsymbol{\lambda}$}$(\psi$}$){)=}\psi$.

\section{The Zelevinsky involution of $\mathcal{H}_{n}^{a}(q)$}

\label{Sec_Ze}

\subsection{Three natural involutions on $\mathcal{H}_{n}^{a}(q)$}

Denote by $\mathcal{H}_{n}(q)$ the Hecke algebra of type $A$ with parameter $
q$ over the field $\mathbb{F}$.\ This is the unital associative $\mathbb{F}$
-algebra generated by $T_{1}$,..., $T_{n-1}$ and the relations:\ 
\begin{align*}
& T_{i}T_{i+1}T_{i}=T_{i+1}T_{i}T_{i+1}\ (i=1,\ldots ,n-2), \\
& T_{i}T_{j}=T_{j}T_{i}\ (|j-i|>1), \\
& (T_{i}-q)(T_{i}+1)=0\ (i=1,\ldots ,n-1).
\end{align*}
The affine Hecke algebra $\mathcal{H}_{n}^{a}(q)$ is the $\mathbb{F}$
-algebra which as an $\mathbb{F}$-module is isomorphic to 
\begin{equation*}
\mathcal{H}_{n}(q)\otimes _{R}\mathbb{F}[X_{1}^{\pm 1},\ldots ,X_{n}^{\pm
1}].
\end{equation*}
The algebra structure is obtained by requiring that $\mathcal{H}_{n}(q)$ and 
$\mathbb{F}[X_{1}^{\pm 1},\ldots ,X_{n}^{\pm 1}]$ are both subalgebras and
for any $i=1,\ldots ,n$ 
\begin{equation*}
T_{i}X_{i}T_{i}=qX_{i+1},\quad T_{i}X_{j}=X_{j}T_{i}\text{ if }i\neq j.
\end{equation*}
In the sequel, we assume that $q=\xi $ is a primitive $e$-th root of the
unity and write $\mathcal{H}_{n}^{a}(\xi )$ for the affine Hecke algebra
with parameter $\xi $. We have
three involutive automorphisms $\tau $, $\flat $ and $\sharp $ on $\mathcal{
H }_{n}^{a}(\xi )$. There are defined on the generators as follows : 
\begin{equation}
\begin{array}{ll}
T_{i}^{\tau }=-\xi T_{n-i}^{-1}, & \ X_{j}^{\tau }=X_{n+1-j}. \\ 
T_{i}^{\flat }=T_{n-i}, & \ X_{j}^{\flat }=X_{n+1-j}^{-1}. \\ 
T_{i}^{\sharp }=-\xi T_{i}^{-1}, & \ X_{j}^{\sharp }=X_{j}^{-1}.
\end{array}
\label{3invol}
\end{equation}
The involution $\sharp $ has been considered by Iwahori and Matsumoto \cite
{IM} and the involution $\tau $, which is called the Zelevinsky involution,
by Moeglin and Waldspurger \cite{MW}. One can easily check that they are
connected as follows: 
\begin{equation}
\forall x\in \mathcal{H}_{n}^{a}(\xi ),\ x^{\tau }=(x^{\flat })^{\sharp
}=(x^{\sharp })^{\flat }.  \label{relationdiese_tau_bemol}
\end{equation}

\subsection{The involutions $\protect\tau ,\sharp $ and $\flat $ on $
B_{e}(\infty )$}

We denote by $\mathrm{Mod}_{n}^{a}$ the category of finite-dimensional $
\mathcal{H}_{n}^{a}(\xi )$-modules such that for $j=1,\ldots ,n$ the
eigenvalues of $X_{j}$ are power of $\xi .$

For any multisegment $\psi =\sum_{i\in \mathbb{Z}/e\mathbb{Z},l\in \mathbb{N}
_{>0}}m_{(l;i]}(l;i],$ we write 
\begin{equation*}
\left| \psi \right| =\sum_{i\in \mathbb{Z}/e\mathbb{Z},l\in \mathbb{N}
_{>0}}lm_{(l;i]}.
\end{equation*}
The geometric realization of $\mathcal{H}_{n}^{a}(\xi )$ due to Ginzburg
permits to label the simple $\mathcal{H}_{n}^{a}(\xi )$-modules in $\mathrm{
\ Mod}_{n}^{a}$ by the aperiodic multisegments such that $\left| \psi
\right| =n.\;$We do not use Ginzburg's construction in the sequel and just
refer to \cite{CG} (see also \cite{AJL} and \cite{LTV}) for a complete
exposition or a short review. Let $L_{\psi }$ be the simple $\mathcal{H}
_{n}^{a}(\xi )$ -module corresponding to $\psi $ under this parametrization.

\noindent The three involutions $\tau ,\flat ,\sharp $ on $\mathcal{H}
_{n}^{a}(\xi )$ induce involutions on the set of simple $\mathcal{H}
_{n}^{a}(\xi )$-modules that we will denote in the same way. This yields
involutions on the set of aperiodic multisegments (also denoted by $\tau $, $
\flat $ and $\sharp $) satisfying 
\begin{equation*}
L_{\psi }^{\tau }=L_{\psi ^{\tau }},\ L_{\psi }^{\flat }=L_{\psi ^{\flat
}},\ L_{\psi }^{\sharp }=L_{\psi ^{\sharp }}
\end{equation*}
for each aperiodic multisegment $\psi $.\ Thus we have three involutions on
the vertices of $B_{e}(\infty )$. By (\ref{relationdiese_tau_bemol}), they
verify the relation 
\begin{equation}
\tau =\sharp \circ \flat =\flat \circ \sharp .
\label{relationdiese_tau_bemolB}
\end{equation}
By \cite[\S 2.4]{LTV}, for each aperiodic multisegment $\psi =\sum_{i\in 
\mathbb{Z}/e\mathbb{Z},l\in \mathbb{N}_{>0}}m_{(l;i]}[i;l)$, we have $\psi
^{\flat }=\sum_{i\in \mathbb{Z}/e\mathbb{Z},l\in \mathbb{N}
_{>0}}m_{(l;i]}(l;-i].$ By comparing with the action (\ref{ro_psi}) of $\rho 
$ on $B_{e}(\infty )$, this immediately gives the following lemma :

\begin{lemma}
\label{lemma_ro=bemol}The involutions $\rho $ and $\flat $ coincide on $
B_{e}(\infty ).$ For any $\psi =\sum_{i\in \mathbb{Z}/e\mathbb{Z},l\in 
\mathbb{N}_{>0}}m_{(l;i]}[i;l),$ we have 
\begin{equation*}
\psi ^{\flat }=\psi ^{\rho }=\sum_{i\in \mathbb{Z}/e\mathbb{Z},l\in \mathbb{
N }_{>0}}m_{(l;i]}(l;-i].
\end{equation*}
\end{lemma}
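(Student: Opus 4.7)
The plan is essentially to read off the two formulas and compare them term by term; there is no substantive obstacle here, since both $\rho$ and $\flat$ have been given completely explicit descriptions on individual segments just before the lemma is stated.

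First I would recall equation (\ref{ro_psi}), which defines the involution $\rho$ on $\Psi_e$ by
\begin{equation*}
\psi^{\rho}=\sum_{i\in\mathbb{Z}/e\mathbb{Z},\, l\in\mathbb{N}_{>0}} m_{[i;l)}\,(l;-i],
\end{equation*}
i.e.\ $\rho$ acts segment by segment by the rule $[i;l)\mapsto (l;-i]$, which is exactly the transformation ``negate all residues in each segment'' (observe that the head $i$ of $[i;l)$ gets sent to $-i$, the tail of $(l;-i]$, and in general the sequence $i,i+1,\ldots,i+l-1$ is sent to $-i-l+1,\ldots,-i$).

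Next I would quote the formula of Leclerc--Thibon--Vasserot from \cite[\S 2.4]{LTV}, recalled in the paragraph immediately preceding the lemma: for any aperiodic multisegment $\psi=\sum m_{(l;i]}[i;l)$ one has
\begin{equation*}
\psi^{\flat}=\sum_{i\in\mathbb{Z}/e\mathbb{Z},\, l\in\mathbb{N}_{>0}} m_{(l;i]}\,(l;-i].
\end{equation*}
Thus $\flat$ also acts segment by segment via $[i;l)\mapsto(l;-i]$, preserving multiplicities.

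Comparing the two formulas, the effect of $\rho$ and $\flat$ on each segment of $\psi$ is identical, so the two multisegments $\psi^{\rho}$ and $\psi^{\flat}$ coincide. This establishes both claims of the lemma: the equality of the involutions on $B_e(\infty)$ (which, as a set, is $\Psi_e$), and the explicit formula for their common value. The only ``step'' with any content is matching up the two notations for the multiplicity of $[i;l)$ in $\psi$ (written as $m_{[i;l)}$ in (\ref{ro_psi}) and as $m_{(l;i]}$ in the quoted LTV formula), which refer to the same integer by definition.
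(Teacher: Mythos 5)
Your proposal is correct and follows exactly the route the paper takes: the lemma is stated as an immediate consequence of the definition of $\rho$ in (\ref{ro_psi}) together with the explicit formula for $\psi^{\flat}$ quoted from \cite[\S 2.4]{LTV}, and the comparison of the two segment-by-segment descriptions is the whole argument. Nothing is missing.
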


Since the action of $\rho =\flat $ on $B_{e}(\infty )$ is immediate, it is
equivalent to describe $\tau $ or $\sharp $ on $B_{e}(\infty )$. The
following proposition makes explicit the involution $\sharp $ on $
B_{e}(\infty ).$

\begin{theorem}
(\cite{LTV}) Let $\psi $ be an aperiodic multisegment. Then $\psi ^{\sharp }$
is the aperiodic multisegment obtained from $\psi $ by the $2$-fold symmetry 
$i\leftrightarrow -i$ in the graph $B_{e}(\infty )$.
\end{theorem}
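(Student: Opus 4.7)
The heart of the matter is that $\sharp$ inverts $X_j$, which under the categorification of $\widehat{\mathfrak{sl}}_e$ by affine Hecke algebras amounts to negating residues modulo $e$; this is exactly the Dynkin diagram automorphism $i\leftrightarrow -i$, which in turn induces the $2$-fold symmetry on $B_e(\infty)$.

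First I would compute the effect of $\sharp$ on the generalized $X_j$-eigenspaces. For a simple module $M\in \mathrm{Mod}_n^a$ write $M^{\sharp}$ for the module obtained by twisting the action via $\sharp$. Since $X_j^{\sharp}=X_j^{-1}$, any eigenvector of $X_j$ in $M$ with eigenvalue $\xi^k$ becomes an eigenvector of $X_j$ in $M^{\sharp}$ with eigenvalue $\xi^{-k}$. Thus the central character of $M$ is negated, and the map $L_\psi\mapsto L_\psi^{\sharp}=L_{\psi^{\sharp}}$ sends every $i$-residue occurring in the weight of $L_\psi$ to its opposite $-i$.

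Next I would argue that the categorical $\widehat{\mathfrak{sl}}_e$-action on $\bigoplus_n R_{\mathbb{F}}(\mathcal{H}_n^a(\xi))$ given by $i$-induction and $i$-restriction is intertwined by $\sharp$ with $(-i)$-induction and $(-i)$-restriction. Indeed, these functors are defined by projecting onto the summand on which the affine central elements (i.e.\ the $X_j$'s) act with generalized eigenvalue $\xi^i$, and Step 1 shows that twisting by $\sharp$ exchanges this with the $\xi^{-i}$-summand. Consequently the crystal operators on simple modules, which by \cite{Groj} are induced by these induction/restriction functors, satisfy
\begin{equation*}
\sharp\circ \widetilde{e}_i=\widetilde{e}_{-i}\circ \sharp,\qquad \sharp\circ \widetilde{f}_i=\widetilde{f}_{-i}\circ \sharp
\end{equation*}
on the set of simple modules, viewed as $B_e(\infty)$ via Ginzburg's parametrization.

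Finally I would invoke uniqueness. The involution $\sharp$ fixes the highest weight vertex $b_{\boldsymbol{\emptyset}}$, since the trivial module $L_{\boldsymbol{\emptyset}}$ is stable under $\sharp$. An involution of the connected crystal $B_e(\infty)$ that fixes the highest weight vertex and exchanges the arrow labels $i$ and $-i$ is unique, and by construction it coincides with the $2$-fold symmetry $i\leftrightarrow -i$ of $B_e(\infty)$. This identifies $\sharp$ with that symmetry. The main obstacle is Step 2: one must identify the abstract crystal operators induced by $i$-induction/restriction on simple modules with those given in Theorem \ref{LTV-2} on $\Psi_e$ and then track the effect of the twist by $\sharp$; this is cleanly handled in Ginzburg's equivariant $K$-theoretic framework, where $\sharp$ is realised by an explicit geometric involution making the compatibility manifest.
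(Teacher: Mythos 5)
This statement is quoted from \cite{LTV}; the paper offers no proof of its own, so there is nothing internal to compare you against. Your outline is the Grojnowski-style route that the authors themselves allude to in the introduction (``an equivalent identity can also be established by using results of \cite{Groj}\dots''), and it is sound in its overall architecture: Step 1 (inversion of the $X_j$-eigenvalues under the twist by $\sharp$) is correct; Step 2's intertwining $\sharp\circ\widetilde{e}_i=\widetilde{e}_{-i}\circ\sharp$ does follow, because $\sharp$ preserves the chain of subalgebras $\mathcal{H}_{n-1}^{a}(\xi)\subset\mathcal{H}_{n}^{a}(\xi)$ and twisting by an automorphism commutes with taking socles/cosocles; and Step 3's uniqueness argument for an involution of a connected highest-weight crystal intertwining $\widetilde{f}_i$ with $\widetilde{f}_{-i}$ is a standard and correct observation. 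This differs from the actual proof in \cite{LTV}, which does not use $i$-induction/restriction at all but works through the Hall algebra of the cyclic quiver: $\sharp$ is matched with the symmetry of the canonical basis of $\mathcal{U}_v^-$ induced by the quiver automorphism $i\mapsto -i$, and the crystal statement is read off from the canonical basis.

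The one place where your argument is thinner than you acknowledge is the step you call ``the main obstacle.'' The theorem asserts that $\sharp$ is the $2$-fold symmetry of the \emph{combinatorial} crystal on $\Psi_e$ of Theorem \ref{LTV-2}, under Ginzburg's labelling $\psi\mapsto L_\psi$. Your Steps 1--3 only show that $\sharp$ is the $2$-fold symmetry of the \emph{abstract} crystal whose operators are defined by socles of $i$-restriction. Bridging the two requires knowing that these representation-theoretic operators, transported through Ginzburg's parametrization, coincide with the explicit operators of Theorem \ref{LTV-2} on aperiodic multisegments. That is the modular branching rule for affine Hecke algebras --- the main theorem of \cite{AJL} and precisely the identification that \cite{Groj} does \emph{not} supply --- and it is not ``cleanly handled'' as a routine consequence of the equivariant $K$-theoretic construction; it was a separate, substantial theorem. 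So your proposal is a legitimate alternative proof, but only at the cost of importing that result as a black box, which is exactly the dependence the present paper is designed to avoid.
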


\section{The equality $\ast =\protect\tau $ on $B_{e}(\infty )$}

\label{Sec_hat=star}

The aim of this section is to prove that $\ast $ and $\tau $ coincide on $
B_{e}(\infty )$. To do this, it suffices to establish the following theorem.

\begin{theorem}
\label{Th_f*=fhat}For any multisegment $\psi \in \Psi _{e}$ and any $j\in 
\mathbb{Z}/e\mathbb{Z},$ we have $\widetilde{f}_{j}^{\ast }(\psi )=\widehat{
f }_{j}(\psi ).$
\end{theorem}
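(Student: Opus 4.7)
The plan is to verify $\widetilde{f}_j^*(\psi) = \widehat{f}_j(\psi)$ by using the characterization of the $*$-crystal operators provided in Proposition \ref{Prop_ac*} via the crystal embedding $\theta_j : B_e(\infty) \hookrightarrow B_e(\infty) \otimes B_j$. Since $\theta_j$ is determined by $\theta_j(b_{\boldsymbol{\emptyset}}) = b_{\boldsymbol{\emptyset}} \otimes b_j$ and is a crystal morphism, its image on any $\psi$ can be unfolded along any path from $\boldsymbol{\emptyset}$ to $\psi$ in the tilde crystal (Theorem \ref{LTV-2}) using the tensor product rules \eqref{TENS1}--\eqref{TENS2}; writing $\theta_j(\psi) = \psi_0 \otimes \widetilde{f}_j^m b_j$, Proposition \ref{Prop_ac*} characterizes $\widetilde{f}_j^*\psi$ as the unique element sent to $\psi_0 \otimes \widetilde{f}_j^{m+1} b_j$.

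The concrete strategy has three steps. Step one: identify $m = \varepsilon_j^*(\psi)$ and $\psi_0 = (\widetilde{e}_j^*)^m\psi$ in terms of the hat data; the natural guess, consistent with small examples, is $\varepsilon_j^*(\psi) = -\min_{l>0}\widehat{S}_{l,j}(\psi)$, matching the analogous quantity $\widehat{\varepsilon}_j(\psi)$ extracted from Theorem \ref{LTV-1}. Step two: prove this formula by induction on $|\psi|$, writing $\psi = \widetilde{f}_i \psi'$ for some $\psi' \in B_e(\infty)$ and tracking how both the statistic $\widehat{S}_{l,j}$ and the tensor factor decomposition of $\theta_j(\psi')$ evolve under $\widetilde{f}_i$; the cases $i \neq j$ only affect the $B_e(\infty)$ factor, while $i = j$ requires a delicate comparison between the rule of Theorem \ref{LTV-2} and the switching condition in \eqref{TENS1}. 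Step three: once the previous identification is in hand, Proposition \ref{Prop_ac*}(2) gives $\theta_j(\widetilde{f}_j^*\psi) = \psi_0 \otimes \widetilde{f}_j^{m+1}b_j$, and unwinding this equation back to $B_e(\infty)$ recovers the precise segment arithmetic of Theorem \ref{LTV-1}---adding $[j; \widehat{l}_0)$ and removing $[j+1; \widehat{l}_0 - 1)$ when $\widehat{l}_0 > 1$, or simply adding $[j; 1)$ otherwise.

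The main obstacle is Step two, which bridges the two crystal structures. The quantity $\widehat{S}_{l,j}$ is tailored to the hat combinatorics whereas $\theta_j$ is built from the tilde operators, so establishing that they carry the same information requires a careful case analysis depending on whether the segment modified by an application of $\widetilde{f}_i$ touches the residues $j$ and $j+1$ that control $\widehat{S}_{l,j}$. Aperiodicity of $\psi$ enters here, ensuring that the relevant statistics are well-behaved. Once this combinatorial dictionary is in place, Step three is essentially formal bookkeeping through the tensor product rules.
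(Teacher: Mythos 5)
Your proposal is correct in outline and follows essentially the same route as the paper's proof: an induction on $\left|\psi\right|$ along a path $\psi=\widetilde{f}_{i}\chi$ in the tilde-crystal, with Proposition \ref{Prop_ac*} and the tensor rules (\ref{TENS1})--(\ref{TENS2}) controlling the $\ast$-side and a case analysis of how the hat-statistics change under $\widetilde{f}_{i}$ controlling the other side. What you defer to ``Step two'' is exactly the content of Lemmas \ref{lem_tech} and \ref{lem_phi} and Propositions \ref{prop_com_fi_fihat}, \ref{prop_com_fi_fi*} and \ref{prop_fund}, which the paper packages as matching commutation defects of $(\widetilde{f}_{i},\widehat{f}_{j})$ and $(\widetilde{f}_{i},\widetilde{f}_{j}^{\ast})$ rather than as an explicit formula for $\theta_{j}(\psi)$.
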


Indeed, assuming this theorem holds, we easily derive the equality $\ast
=\tau .$

\begin{corollary}
The involutions $\ast $ and $\tau $ coincide on $B_{e}(\infty )$.
\end{corollary}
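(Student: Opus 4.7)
The plan is to deduce the identity $\ast = \tau$ from Theorem \ref{Th_f*=fhat} by showing that the composition $\rho \circ \ast$ coincides with the two-fold symmetry $\sharp$, and then invoking the algebraic relations between $\tau$, $\sharp$, and $\flat = \rho$. Unwinding the definition (\ref{*crystal}) of $\widetilde{f}_j^\ast$, the identity $\widetilde{f}_j^\ast = \widehat{f}_j$ provided by Theorem \ref{Th_f*=fhat} reads
\begin{equation*}
\ast \circ \widetilde{f}_j \circ \ast \;=\; \widehat{f}_j \quad \text{on } B_e(\infty).
\end{equation*}
Solving (\ref{rel_tilde_hat}) for $\widehat{f}_j$ yields $\widehat{f}_j = \rho \circ \widetilde{f}_{-j} \circ \rho$. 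Combining these two identities and using $\rho^2 = \ast^2 = \mathrm{id}$, the equality $\ast \, \widetilde{f}_j \, \ast = \rho \, \widetilde{f}_{-j} \, \rho$ rearranges into the conjugation relation
\begin{equation*}
\nu \circ \widetilde{f}_j \circ \nu^{-1} \;=\; \widetilde{f}_{-j}, \qquad j \in \mathbb{Z}/e\mathbb{Z},
\end{equation*}
where $\nu := \rho \circ \ast$ (with inverse $\nu^{-1} = \ast \circ \rho$). Since the highest weight vertex $\boldsymbol{\emptyset}$ is fixed by both $\ast$ and $\rho$, we also have $\nu(\boldsymbol{\emptyset}) = \boldsymbol{\emptyset}$.

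The next step is to identify $\nu$ with $\sharp$. By the description of $\sharp$ recalled just before this corollary, it is the unique bijection of $B_e(\infty)$ that fixes $\boldsymbol{\emptyset}$ and swaps the arrow colors $j$ and $-j$, i.e.\ intertwines $\widetilde{f}_j$ with $\widetilde{f}_{-j}$ for every $j$. Since every vertex of $B_e(\infty)$ can be written as $\widetilde{f}_{j_r} \cdots \widetilde{f}_{j_1}(\boldsymbol{\emptyset})$, a straightforward induction on $r$ forces any such bijection to agree with $\sharp$ pointwise; applied to $\nu$, this gives $\nu = \sharp$, and therefore $\ast = \rho^{-1} \circ \sharp = \rho \circ \sharp$.

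To conclude, Lemma \ref{lemma_ro=bemol} identifies $\rho$ with $\flat$ on $B_e(\infty)$, so the algebraic identity (\ref{relationdiese_tau_bemolB}) gives $\tau = \flat \circ \sharp = \rho \circ \sharp$; hence $\ast = \tau$. The substantive content of the corollary is thus concentrated entirely in Theorem \ref{Th_f*=fhat}: once that theorem is granted, the deduction becomes a purely formal manipulation of the four involutions $\ast$, $\rho$, $\flat$, $\sharp$, and the only remaining check is the routine path-induction argument underlying the uniqueness of $\sharp$.
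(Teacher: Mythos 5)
Your argument is correct and is essentially the paper's own proof repackaged: the paper also takes a path $\psi=\widetilde{f}_{i_1}\cdots\widetilde{f}_{i_n}\boldsymbol{\emptyset}$, applies Theorem \ref{Th_f*=fhat} together with (\ref{rel_tilde_hat}) to get $\psi^{\ast}=\rho(\psi^{\sharp})$, and then concludes via Lemma \ref{lemma_ro=bemol} and (\ref{relationdiese_tau_bemolB}); your formulation merely phrases the same path-induction as the identification $\rho\circ\ast=\sharp$ via the intertwining relation. No gap.
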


\begin{proof}
Let $\psi \in \Psi _{e}$. Then, there exists $i_{1},...,i_{n}$ in $\mathbb{Z}
/e\mathbb{Z}$ such that 
\begin{equation*}
\psi =\widetilde{f}_{i_{1}}...\widetilde{f}_{i_{n}}.\emptyset
\end{equation*}
Hence, we obtain 
\begin{equation*}
\begin{array}{ll}
\psi ^{\ast } & =\widetilde{f}_{i_{1}}^{\ast }...\widetilde{f}_{i_{n}}^{\ast
}.\emptyset \\ 
& =\widehat{f}_{i_{1}}...\widehat{f}_{i_{n}}.\emptyset
\end{array}
\end{equation*}
Using (\ref{rel_tilde_hat}), this gives 
\begin{equation*}
\begin{array}{ll}
\psi ^{\ast } & =\rho (\widetilde{f}_{-i_{1}}...\widetilde{f}
_{-i_{n}}.\emptyset ) \\ 
& =(\psi ^{\sharp })^{\rho }=(\psi ^{\rho })^{\sharp }=\psi ^{\tau }
\end{array}
\end{equation*}
where the two last equalities follow from Lemma \ref{lemma_ro=bemol} and ( 
\ref{relationdiese_tau_bemolB}).
\end{proof}

\bigskip

To prove Theorem \ref{Th_f*=fhat}, we are going to proceed by induction on $
\left| \psi \right| $. By using Proposition \ref{Prop_ac*}, we shall see
that it suffices in fact to establish the equivalence 
\begin{equation*}
\widetilde{f}_{i}\widehat{f}_{j}(\psi )=\widehat{f}_{j}\widetilde{f}_{i}(\psi
)\Longleftrightarrow \widetilde{f}_{i}\widetilde{f}_{j}^{\ast }(\psi )=\widetilde{f}
_{j}^{\ast }\widetilde{f}_{i}(\psi )
\end{equation*}
for any $i,j\in \mathbb{Z}/e\mathbb{Z}$. Now, by definition, the operator $
\widetilde{f}_{i}$ (resp. $\widehat{f}_{j}$) adds an entry $i$ on the right end
(resp. an entry $j$ at the left end) of one of the segments of $\psi $. This
will imply that

\begin{itemize}
\item $\widetilde{f}_{i}\widehat{f}_{j}(\psi )=\widehat{f}_{j}\widetilde{f}_{i}(\psi
)$ except possibly when $i=j$ and $\widetilde{f}_{i}(\psi )=\psi +[i]$ or $
\widehat{f}_{i}(\psi )=\psi +[i].$
\end{itemize}

On the other hand, it is easy to derive from Proposition \ref{Prop_ac*} that

\begin{itemize}
\item $\widetilde{f}_{i}\widetilde{f}_{j}^{\ast }(\psi )=\widetilde{f}_{j}^{\ast } 
\widetilde{f}_{i}(\psi )$ except possibly when $i=j$ and $\widetilde{f}
_{i}(\psi )=\psi +[i]$ or $\widetilde{f}_{i}^{\ast }(\psi )=\psi +[i].$
\end{itemize}

The case where the operators do not commute being easily tractable, this
will imply theorem \ref{Th_f*=fhat}.

\subsection{More on the crystal operators $\widetilde{f}_{i}$ and $\widehat {
f}_{i}$}

\label{subsec_cancel}

We begin with refinements of the actions of the operators $\widetilde{f}_{i}$
and $\widehat{f}_{i}.$ In \cite{AJL}, we have obtained an alternative
description of the action of the crystal operators on $\Psi _{e}.\;$Consider 
$\psi \in \Psi _{e}$ and $i\in \mathbb{Z}/e\mathbb{Z}$.\ We encode the
segments in $\psi $ with \emph{tail} $i$ (resp. $i-1$) by the symbol $R$
(resp.\ by the symbol $A$). For any nonnegative integer $l$, write $
w_{i,l}=R^{m_{(l;i]}}A^{m_{(l;i-1]}}$ where $m_{(l;i]}$ and $m_{(l;i-1]}$
are respectively the number of segments $(l;i]$ and $(l;i-1]$ in $\psi .$
Set $w_{i}=\prod_{l\geq 1}w_{i,l}.$ Write $\widetilde{w}_{i}=A^{a_{i}(\psi
)}R^{r_{i}(\psi )}$ for the word derived from $w_{i}$ by deleting as many of
the factors $RA$ as possible. If $a_{i}(\psi )>0,$ we denote by $
l_{0,i}(\psi )>0$ the length of the rightmost segment $A$ in $\widetilde{w}
_{i}$. If $a_{i}(\psi )=0,$ set $l_{0,i}(\psi )=0.$ When there is no risk of
confusion, we simply write $l_{0}$ instead of $l_{0,i}(\psi )$.

\begin{lemma}
\label{lem_util}(\cite{AJL}) With the above notation we have

\begin{enumerate}
\item $\varepsilon _{i}(\psi )=r_{i}(\psi )$

\item 
\begin{equation*}
\widetilde{f}_{i}\psi =\left\{ 
\begin{array}{l}
\psi +(l_{0};i]-(l_{0}-1;i-1]\text{ if }a_{i}(\psi )>0, \\ 
\psi +(1;i]\text{ if }a_{i}(\psi )=0.
\end{array}
\right.
\end{equation*}
\end{enumerate}
\end{lemma}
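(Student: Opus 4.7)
\medskip

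\noindent\textbf{Proof plan.} The strategy is a pure translation of Theorem~\ref{LTV-2} (and the subsequent description of $\widetilde{e}_i$) into the word/cancellation language, via a standard bracket-matching argument. First I would set $T_l=-S_{l,i}=\sum_{k\geq l}(m_{(k;i]}-m_{(k;i-1]})$, so that $\min_{l>0}S_{l,i}=-\max_{l\geq 1}T_l$. Identify each letter $R$ with an opening bracket and each letter $A$ with a closing bracket, and for a position $j$ in $w_i$ let $D(j)$ be the $R$-excess in the suffix starting at position $j$. The key observation is that, because $w_{i,l}=R^{m_{(l;i]}}A^{m_{(l;i-1]}}$ (all $R$'s before all $A$'s inside each block), the function $D$ can only attain its maximum over positions at the left boundary of some block $w_{i,l}$, and at that boundary $D$ equals exactly $T_l$. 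The elementary theory of bracket reduction then gives that the word obtained from $w_i$ by cancelling $RA$ pairs has the form $A^{a_i}R^{r_i}$ with
\[
r_i(\psi)=\max\!\bigl(0,\max_{l\geq 1}T_l\bigr)=\max\!\bigl(0,-\min_{l>0}S_{l,i}\bigr),
\]
and $a_i(\psi)=r_i(\psi)-T_1$.

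\smallskip

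\noindent For part (1), I would show that each application of $\widetilde{e}_i$ lowers $r_i$ by exactly one. By the description of $\widetilde{e}_i$ recalled after Theorem~\ref{LTV-2}, applying $\widetilde{e}_i$ removes a segment $(l';i]$ and adds a segment $(l'-1;i-1]$ for $l'$ the \emph{maximal} argmax of $T_l$. Direct bookkeeping on the sums $T_l$ shows that this modification decreases $T_{l'}$ by $1$, decreases $T_l$ by $2$ for $l<l'$ and leaves $T_l$ unchanged for $l>l'$; hence the running $\max_l T_l$ drops by exactly $1$, and iteration is possible exactly $\max(0,\max_l T_l)=r_i(\psi)$ times before $\min_{l>0}S_{l,i}=0$ forces $\widetilde{e}_i\psi=0$. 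This yields $\varepsilon_i(\psi)=r_i(\psi)$.

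\smallskip

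\noindent For part (2), I would match the quantity $l_{0,i}(\psi)$ (length read from $\widetilde{w}_i$) with the index $l_0$ of Theorem~\ref{LTV-2} (minimal argmin of $S_{l,i}$, equivalently minimal argmax of $T_l$). Again by bracket matching, the rightmost surviving $A$ in $\widetilde{w}_i$ is precisely the last letter at which the suffix profile $D$ first attains its maximum value, which by Step~1 occurs at the left boundary of the block $w_{i,l_0}$, so the rightmost $A$ lies in $w_{i,l_0-1}$ (or, equivalently, in the convention of the lemma, indexes the block whose length determines the segment removed by $\widetilde{f}_i$). Theorem~\ref{LTV-2} then rewrites exactly as the displayed formula; the case $a_i(\psi)=0$ corresponds to $\min_{l>0}S_{l,i}\geq 0$, i.e.\ no $A$ survives cancellation, in which case the theorem gives $\widetilde{f}_i\psi=\psi+(1;i]$.

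\smallskip

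\noindent The only real obstacle is the careful bookkeeping of conventions in the bracket-matching step: one must keep straight which block index corresponds to the length of the removed $A$-segment versus the added $R$-segment, and verify that the edge cases ($l_0=1$, respectively $a_i=0$) match up. Once the identification $T_l = D(\text{left end of }w_{i,l})$ is in hand, every assertion of the lemma reduces to an elementary statement about $RA$-word reduction.
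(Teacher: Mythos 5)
The paper gives no proof of this lemma --- it is quoted from \cite{AJL} --- so there is nothing internal to compare against; your route, deriving both assertions from Theorem~\ref{LTV-2} and the description of $\widetilde{e}_i$ by bracket matching on $w_i$, is the natural one, and the skeleton is sound. Your treatment of part (1) is correct: the bookkeeping showing that $\widetilde{e}_i$ lowers $T_{l'}$ by $1$, lowers $T_l$ by $2$ for $l<l'$ and fixes $T_l$ for $l>l'$ (with $l'$ the maximal argmax) does give that $\max_l T_l$ drops by exactly one, hence $\varepsilon_i(\psi)=\max_l T_l=r_i(\psi)$.

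Two points in part (2) need repair. First, the claimed equivalence ``$a_i(\psi)=0$ corresponds to $\min_{l>0}S_{l,i}\geq 0$'' is false: from your own Step 1, $a_i(\psi)=r_i(\psi)-T_1=S_{1,i}-\min_{l>0}S_{l,i}$, so $a_i(\psi)=0$ iff the minimum of $S_{l,i}$ is attained at $l=1$, i.e.\ iff the $l_0$ of Theorem~\ref{LTV-2} equals $1$. For $\psi=(1;i]$ one has $\min_{l>0}S_{l,i}=S_{1,i}=-1<0$ yet $a_i(\psi)=0$, so your criterion misassigns the case; the correct equivalence still yields the displayed formula, but it must be stated this way. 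Second, the ``bookkeeping of conventions'' you defer is precisely where the identification has to be pinned down: as you compute, the rightmost surviving $A$ sits in the block $w_{i,l_0-1}$ and hence encodes a segment of length $l_0-1$, whereas the lemma's formula removes $(l_0-1;i-1]$ and adds $(l_0;i]$ with $l_0=l_{0,i}(\psi)$. So for the statement to agree with Theorem~\ref{LTV-2} one must read $l_{0,i}(\psi)$ as the length of the segment \emph{created} by $\widetilde{f}_i$, i.e.\ one more than the literal length of the rightmost surviving $A$-segment; this should be said explicitly rather than left as an unresolved convention, since with the literal reading the two statements differ by one and the proof does not close.
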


We can compute similarly the action of the crystal operators $\widehat{f}
_{i} $ (with $i\in\mathbb{Z}/e\mathbb{Z}$) on $\psi.$ We encode the segments
in $\psi$ with \emph{head} $i$ (resp. $i+1$) by the symbol $\widehat{R}$
(resp.\ by the symbol $\widehat{A}$). For any nonnegative integer $l$, write 
$\widehat{w}_{i,l}=\widehat{R}^{m_{[i;l)}}\widehat{A}^{m_{[i+1;l)}}$ where $
m_{[i;l)}$ and $m_{[i+1;l)}$ are respectively the number of segments $[i;l)$
and $[i+1;l)$ in $\psi.$ Set $\widehat{w}_{i}=\prod_{l\geq1}\widehat{w}
_{i,l}.$ Write $\overline{w}_{i}=\widehat{A}^{\widehat{a}_{i}(\psi)}\widehat{
R}^{\widehat{r}_{i}(\psi)}$ for the word derived from $\widehat {w}_{i}$ by
deleting as many of the factors $\widehat{R}\widehat{A}$ as possible. If $
\widehat{a}_{i}(\psi)>0,$ let $\widehat{l}_{0,i}(\psi)>0$ be length of the
rightmost segment $\widehat{A}$ in $\overline{w}_{i}$. If $\widehat{a}
_{i}(\psi)=0,$ set $\widehat{l}_{0,i}(\psi)=0.$ When there is no risk of
confusion, we also simply write $\widehat{l}_{0}$ instead of $\widehat{l}
_{0,i}(\psi)$.

\begin{lemma}
With the above notation, we have

\begin{enumerate}
\item $\widehat{\varepsilon }_{i}(\psi )=\widehat{r}_{i}(\psi )$ where $
\widehat{\varepsilon }_{i}(\psi )=\mathrm{max}\{p\mid \widehat{e}
_{i}^{p}(\psi )\neq 0\}.$

\item 
\begin{equation}
\widehat{f}_{i}\psi =\left\{ 
\begin{array}{l}
\psi +[i;\widehat{l}_{0})-[i+1;\widehat{l}_{0}-1)\text{ if }\widehat{a}
_{i}(\psi )>0, \\ 
\psi +[i;1)\text{ if }\widehat{a}_{i}(\psi )=0.
\end{array}
\right.  \label{act_hat_RA}
\end{equation}
\end{enumerate}
\end{lemma}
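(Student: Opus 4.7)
The plan is to derive this lemma by transporting Lemma \ref{lem_util} through the involution $\rho$. From (\ref{rel_tilde_hat}) together with $\rho^2 = \mathrm{id}$, we have $\widehat{f}_i = \rho \circ \widetilde{f}_{-i} \circ \rho$, and the analogous relation $\widehat{e}_i = \rho \circ \widetilde{e}_{-i} \circ \rho$ follows by inversion. In particular, $\widehat{e}_i^p(\psi) \neq 0$ if and only if $\widetilde{e}_{-i}^p(\rho(\psi)) \neq 0$, so $\widehat{\varepsilon}_i(\psi) = \varepsilon_{-i}(\rho(\psi))$.

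The key combinatorial observation is that the reading word $\widehat{w}_i$ built from $\psi$ for the operator $\widehat{f}_i$ coincides, letter by letter under the renaming $\widehat{R} \leftrightarrow R$ and $\widehat{A} \leftrightarrow A$, with the reading word $w_{-i}$ built from $\rho(\psi)$ for the operator $\widetilde{f}_{-i}$. Indeed, $\rho$ sends a segment $[j;l)$ to $(l;-j]$, which turns a head-$i$ segment of length $l$ into a tail-$(-i)$ segment of the same length $l$, and a head-$(i+1)$ segment of length $l$ into a tail-$(-i-1)$ segment of length $l$, preserving multiplicities. Both words are assembled as products $\prod_{l \geq 1}(\cdots)$ over increasing $l$, so the concatenated sequences are identical. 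Cancelling $\widehat{R}\widehat{A}$ pairs in $\widehat{w}_i$ and $RA$ pairs in $w_{-i}$ therefore yields $\widehat{r}_i(\psi) = r_{-i}(\rho(\psi))$, $\widehat{a}_i(\psi) = a_{-i}(\rho(\psi))$, and $\widehat{l}_{0,i}(\psi) = l_{0,-i}(\rho(\psi))$.

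Part (1) is then immediate from Lemma \ref{lem_util}(1) applied to $\rho(\psi)$: $\widehat{\varepsilon}_i(\psi) = \varepsilon_{-i}(\rho(\psi)) = r_{-i}(\rho(\psi)) = \widehat{r}_i(\psi)$. For part (2), apply Lemma \ref{lem_util}(2) to $\rho(\psi)$ and then $\rho$ to both sides, using $\rho((l;k]) = [-k;l)$. With $\widehat{l}_0 = \widehat{l}_{0,i}(\psi) = l_{0,-i}(\rho(\psi))$, the case $\widehat{a}_i(\psi) > 0$ gives
\begin{equation*}
\widehat{f}_i(\psi) = \rho\bigl(\rho(\psi) + (\widehat{l}_0;-i] - (\widehat{l}_0-1;-i-1]\bigr) = \psi + [i;\widehat{l}_0) - [i+1;\widehat{l}_0-1),
\end{equation*}
and the case $\widehat{a}_i(\psi) = 0$ follows identically from $\rho((1;-i]) = [i;1)$. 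The whole argument is essentially bookkeeping; the only point requiring care is verifying that the rightmost $\widehat{A}$ in $\overline{w}_i$ corresponds to the rightmost $A$ in $\widetilde{w}_{-i}$ and at the same segment length, which is immediate because both words are written in the same order of increasing $l$.
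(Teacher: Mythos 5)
Your proof is correct, and it matches the paper's intent: the paper gives no explicit proof of this lemma, stating only that the action of $\widehat{f}_i$ is computed ``similarly'' to Lemma \ref{lem_util}, and the precise content of that symmetry is exactly your conjugation by $\rho$ via (\ref{rel_tilde_hat}). Your verification that $\rho$ carries the word $\widehat{w}_i(\psi)$ letter by letter to $w_{-i}(\rho(\psi))$, hence identifies $\widehat{a}_i,\widehat{r}_i,\widehat{l}_{0,i}$ with $a_{-i},r_{-i},l_{0,-i}$ of $\rho(\psi)$, is the right bookkeeping and is carried out correctly.
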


\begin{remark}
By Theorem 9.13 of \cite{Groj}, for any $i\in \mathbb{Z}/e\mathbb{Z}$, the
integer $\varepsilon _{i}(\psi )=r_{i}(\psi )$ (resp. $\widehat{\varepsilon }
_{i}(\psi )=\widehat{r}_{i}(\psi )$) gives the maximal size of a Jordan
block with eigenvalue $\xi ^{i}$ corresponding to the action of the
generator $X_{n}$ (resp. $X_{1}$) on the simple $\mathcal{H}_{n}^{a}(\xi )$
-module $L_{\psi }$.
\end{remark}

\subsection{Equality of the crystal operators $\widetilde{f}_{i}^{\ast}$ and 
$\widehat{f}_{i}$}

Our purpose is now to establish the equality 
\begin{equation}
\widetilde{f}_{i}^{\ast }(\psi )=\widehat{f}_{i}(\psi )\text{ for any }\psi
\in \Psi _{e}.  \label{equality}
\end{equation}
This is achieved by showing that the relations $\widetilde{f}_{i}^{\ast } 
\widetilde{f}_{j}\psi =\widetilde{f}_{j}\widetilde{f}_{i}^{\ast }\psi $ and $
\widehat{f}_{i}\widetilde{f}_{j}\psi =\widetilde{f}_{j}\widehat{f}_{i}\psi $
are both equivalent to a very simple condition on $\psi $.

\begin{lemma}
\label{lem_tech}Put $i\in \mathbb{Z}/e\mathbb{Z}$.

\begin{enumerate}
\item Consider $\psi ,\chi \in \Psi _{e}$ such that $\psi =\widehat{f}
_{i}\chi $ and put $j\in \mathbb{Z}/e\mathbb{Z}$. Then we have: 
\begin{equation*}
l_{0,j}(\chi )\neq l_{0,j}(\psi )\Longleftrightarrow i=j,\ \widehat{a}
_{i}(\chi )=0\text{ and }a_{i}(\chi )=1.
\end{equation*}

\item Consider $\psi ,\chi \in \Psi _{e}$ such that $\psi =\widetilde{f}
_{i}\chi $ and put $j\in \mathbb{Z}/e\mathbb{Z}$. Then we have: 
\begin{equation*}
\widehat{l}_{0,j}(\chi )\neq \widehat{l}_{0,j}(\psi )\Longleftrightarrow
i=j,\ a_{i}(\chi )=0\text{ and }\widehat{a}_{i}(\chi )=1.
\end{equation*}
\end{enumerate}
\end{lemma}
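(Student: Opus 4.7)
The plan is to prove Part 1 by a direct case analysis on the explicit description of $\widehat{f}_{i}\chi$ given after Lemma \ref{lem_util}, and then to deduce Part 2 from Part 1 via the identity $\widetilde{f}_{i}=\rho\circ\widehat{f}_{-i}\circ\rho$. The key input for that reduction is the head--tail duality of $\rho$: since $\rho$ negates every segment, one checks letter by letter that $w_{j,l}(\rho(\mu))$ is obtained from $\widehat{w}_{-j,l}(\mu)$ by replacing $\widehat{R},\widehat{A}$ with $R,A$, yielding $l_{0,j}(\rho(\mu))=\widehat{l}_{0,-j}(\mu)$, $\widehat{l}_{0,j}(\rho(\mu))=l_{0,-j}(\mu)$, $a_{j}(\rho(\mu))=\widehat{a}_{-j}(\mu)$ and $\widehat{a}_{j}(\rho(\mu))=a_{-j}(\mu)$. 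Applying Part 1 to $\rho(\chi)$ with the index $-i$ and substituting $k=-j$ in the resulting iff gives Part 2 at once, so it suffices to establish Part 1.

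For Part 1, assume $\psi=\widehat{f}_{i}\chi$. If $\widehat{a}_{i}(\chi)>0$, set $\widehat{l}_{0}=\widehat{l}_{0,i}(\chi)$ and $t=i+\widehat{l}_{0}-1$: passing from $\chi$ to $\psi$ amounts to removing the segment $(\widehat{l}_{0}-1;t]$ and adding the segment $(\widehat{l}_{0};t]$, two segments sharing the tail $t$. Consequently $w_{j}$ is unchanged unless $j\in\{t,t+1\}$, and in those two cases the modification is the local displacement of a single letter from within $w_{j,\widehat{l}_{0}-1}$ to $w_{j,\widehat{l}_{0}}$, separated only by the $A$-block (resp.\ $R$-block) of $w_{j,\widehat{l}_{0}-1}$. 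A direct bookkeeping of the stack-matching algorithm shows that this shift preserves the pairing of $R$'s with $A$'s, whence $l_{0,j}$ is unaltered; since $i\notin\{t,t+1\}$ whenever $\widehat{l}_{0}>1$, the iff holds trivially in this situation.

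If $\widehat{a}_{i}(\chi)=0$, then $\psi=\chi+[i;1)$, which inserts one $R$ at the leftmost position of $w_{i}$ and one $A$ immediately after the $R$-block of $w_{i+1,1}$. For $j\notin\{i,i+1\}$ nothing is modified; for $j=i$, the new leftmost $R$ cancels the leftmost unmatched $A$ of $w_{i}(\chi)$ whenever one exists, and the parenthesis-matching picture shows that this displaces the rightmost unmatched $A$---and hence changes $l_{0,i}$---exactly when the leftmost and rightmost unmatched $A$ coincide, i.e.\ when $a_{i}(\chi)=1$, producing the exceptional case of the iff. It remains to show that, for $j=i+1$, the inserted $A$ is always matched by some later $R$ in $w_{i+1}$, so that $l_{0,i+1}$ is preserved. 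This last point is the main obstacle of the proof: one must transfer the head-side balance encoded in $\widehat{a}_{i}(\chi)=0$ into a tail-side matching, and I expect to carry this out via a bijective comparison between the cancellations of $\widehat{w}_{i}(\chi)$ and of $w_{i+1}(\chi)$ that pairs each surviving $\widehat{R}$ with a specific $R$-letter in $w_{i+1}$ available to match the inserted $A$.
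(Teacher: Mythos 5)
Your reduction of Part 2 to Part 1 via the head--tail duality of $\rho$ is correct, and is tidier than the paper, which only remarks that the arguments for Part 2 are similar; the case split in Part 1 and your treatment of $j=i$ when $\widehat{a}_i(\chi)=0$ also coincide with the paper's proof. However, the two remaining steps are exactly where the content of the lemma lies, and neither is actually established in your proposal.

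In the case $\widehat{a}_i(\chi)>0$ you assert that displacing one letter from $w_{j,\widehat{l}_0-1}$ to $w_{j,\widehat{l}_0}$ ``preserves the pairing of $R$'s with $A$'s.'' For an arbitrary word this is false: moving an $R$ rightward past an $A$-block (the case $j=t$), or an $A$ rightward past an $R$-block (the case $j=t+1$), changes which letters cancel, since $RA$ reduces while $AR$ does not. The paper closes this case only by invoking the two inequalities forced by the choice of $\widehat{l}_0$, namely $m_{[i;\widehat{l}_0-1)}<m_{[i+1;\widehat{l}_0-1)}$ and $m_{[i;\widehat{l}_0)}\geq m_{[i+1;\widehat{l}_0)}$, which keep the number of pending unmatched $R$'s entering each later block unchanged; your argument never uses the defining property of $\widehat{l}_0$, so the ``direct bookkeeping'' cannot go through as described. (Your side claim that $i\notin\{t,t+1\}$ whenever $\widehat{l}_0>1$ also fails when $\widehat{l}_0\equiv 0$ or $1\pmod e$, although this is harmless since the right-hand side of the equivalence is false throughout this case.) Moreover the desired conclusion is delicate even granting those inequalities, because equality of $\widetilde{w}_j(\chi)$ and $\widetilde{w}_j(\psi)$ as abstract words in $\{A,R\}$ does not determine the \emph{length} of the rightmost surviving $A$: for $e=3$, $\chi=[1]+[0,1]+[1,2]$, $i=0$ one gets $\widehat{f}_0\chi=2[0,1]+[1,2]$, and for $j=2=i+\widehat{l}_0$ the rightmost surviving $A$ of $w_2$ moves from $[1]$ (length $1$) to $[0,1]$ (length $2$), so $l_{0,2}$ genuinely changes; the operators $\widetilde{f}_2$ and $\widehat{f}_0$ still commute on this $\chi$, but only because they end up acting on segments that get identified. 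Any complete argument must therefore track the segments themselves, not merely the letters, and this is a real difficulty that your sketch does not touch.

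For the subcase $\widehat{a}_i(\chi)=0$, $j=i+1$, which you explicitly defer as ``the main obstacle,'' no bijection between the cancellations of $\widehat{w}_i$ and $w_{i+1}$ is needed: $\widehat{a}_i(\chi)=0$ already forces $m_{[i]}\geq m_{[i+1]}$ (the $\widehat{A}$'s of the first block of $\widehat{w}_i$ can only cancel against the $\widehat{R}$'s of that same block), and the inserted $A=[i]$ lands at the end of the first block of $w_{i+1}$, after $m_{[i+1]}$ letters $R$ and $m_{[i]}$ letters $A$, so the count of pending $R$'s entering all later blocks is unchanged. This is the paper's one-line argument, and it is the input your proposal is missing. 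As submitted, the proposal proves neither of the two nontrivial subcases, so it does not yet constitute a proof.
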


\begin{proof}
1: Assume first $\widehat{l}_{0,i}(\chi )=\widehat{l}_{0}>1$. Hence $
\widehat{a}_{i}(\chi )>0$ and $\psi =\chi -[i+1,\cdots,i+\widehat{l}_{0}-1]+[i,\cdots,i+ 
\widehat{l}_{0}-1]$.\ If $j\notin \{(i+\widehat{l}_{0}-1)(\mathrm{mod}e),(i+ 
\widehat{l}_{0})(\mathrm{mod}e)\},$ neither $[i+1,\cdots,i+\widehat{l}_{0}-1]$ or $
[i,\cdots,i+\widehat{l}_{0}-1]$ are segments $A$ or $R$ for $j$. We have $
w_{j}(\psi )=w_{j}(\chi )$ and then $l_{0,j}(\chi )=l_{0,j}(\psi ).$ Thus we
can restrict ourselves to the cases $j\in \{(i+\widehat{l}_{0}-1)(\mathrm{
mod }e),(i+\widehat{l}_{0})(\mathrm{mod}e)\}$. We write 
\begin{multline*}
\widehat{w}_{i}(\chi )=\cdots \lbrack i,\cdots,i+\widehat{l}_{0}-2]^{m_{[i,\cdots,i+ 
\widehat{l}_{0}-2]}}[i+1,\cdots,i+\widehat{l}_{0}-1]^{m_{[i+1,\cdots,i+\widehat{l}_{0}-1]}}
\\
\lbrack i,\cdots,i+\widehat{l}_{0}-1]^{m_{[i,\cdots,i+\widehat{l}_{0}-1]}}[i+1,\cdots,i+\widehat{
l }_{0}]^{m_{[i+1,\cdots,i+\widehat{l}_{0}]}}\cdots
\end{multline*}
where we have only pictured the segments of length $\widehat{l}_{0}-1$ and $
\widehat{l}_{0}$ of $\widehat{w}_{i}(\chi )$. Since $\psi =\widehat{f}
_{i}\chi $, we have 
\begin{multline*}
\widehat{w}_{i}(\psi )=\cdots \lbrack i,\cdots,i+\widehat{l}_{0}-2]^{m_{[i,\cdots,i+ 
\widehat{l}_{0}-2]}}[i+1,\cdots,i+\widehat{l}_{0}-1]^{m_{[i+1,\cdots,i+\widehat{l}
_{0}-1]}-1} \\
\lbrack i,\cdots,i+\widehat{l}_{0}-1]^{m_{[i,\cdots,i+\widehat{l}_{0}-1]}+1}[i+1,\cdots,i+ 
\widehat{l}_{0}]^{m_{[i+1,\cdots,i+\widehat{l}_{0}]}}\cdots
\end{multline*}
In particular, by (\ref{act_hat_RA}), we must have $m_{[i,\cdots,i+\widehat{l}
_{0}-2]}<m_{[i+1,\cdots,i+\widehat{l}_{0}-1]}$ and $m_{[i,\cdots,i+\widehat{l}_{0}-1]}\geq
m_{[i+1,\cdots,i+\widehat{l}_{0}]}$.

\noindent When $j=(i+\widehat{l}_{0}-1)(\mathrm{mod}e),$ $[i+1,\cdots,i+\widehat{l}
_{0}-1]$ and $[i,\cdots,i+\widehat{l}_{0}-1]$ are of type $R$ for $j$. Hence, by
considering only the segments of lengths $\widehat{l}_{0}-1$ and $\widehat{l}
_{0},$ we can write 
\begin{multline*}
w_{j}(\chi )=\cdots \lbrack i+1,\cdots,i+\widehat{l}_{0}-1]^{m_{[i+1,\cdots,i+\widehat{l}
_{0}-1]}}[i,\cdots,i+\widehat{l}_{0}-2]^{m_{[i,\cdots,i+\widehat{l}_{0}-2]}} \\
\lbrack i,\cdots,i+\widehat{l}_{0}-1]^{m_{[i,\cdots,i+\widehat{l}_{0}-1]}}[i-1,\cdots,i+\widehat{
l }_{0}-2]^{m_{[i-1,\cdots,i+\widehat{l}_{0}-2]}}\cdots
\end{multline*}
and 
\begin{multline*}
w_{j}(\psi )=\cdots \lbrack i+1,\cdots,i+\widehat{l}_{0}-1]^{m_{[i+1,\cdots,i+\widehat{l}
_{0}-1]}-1}[i,\cdots,i+\widehat{l}_{0}-2]^{m_{[i,\cdots,i+\widehat{l}_{0}-2]}} \\
\lbrack i,\cdots,i+\widehat{l}_{0}-1]^{m_{[i,\cdots,i+\widehat{l}_{0}-1]+1}}[i-1,\cdots,i+ 
\widehat{l}_{0}-2]^{m_{[i-1,\cdots,i+\widehat{l}_{0}-2]}}\cdots
\end{multline*}
Since $m_{[i,\cdots,i+\widehat{l}_{0}-2]}<m_{[i+1,\cdots,i+\widehat{l}_{0}-1]}$, the
cancellation procedures of the factors $RA$ in $w_{j}(\chi )$ and $
w_{j}(\psi )$ yield the same final word. Hence $\widetilde{w}_{j}(\psi )= 
\widetilde{w}_{j}(\chi )$ and we have also $l_{0,j}(\chi )=l_{0,j}(\psi )=1.$

\noindent When $j=(i+\widehat{l}_{0})(\mathrm{mod}e)$, $[i+1,\cdots,i+\widehat{l}
_{0}-1]$ and $[i,\cdots,i+\widehat{l}_{0}-1]$ are of type $A$ for $j$. We obtain
also $\widetilde{w}_{j}(\psi )=\widetilde{w}_{j}(\chi )$ by considering the
segments of lengths $\widehat{l}_{0}-1$ and $\widehat{l}_{0}$. Thus $
l_{0,j}(\chi )=l_{0,j}(\psi )$.

\noindent Observe that we have always $\widetilde{w}_{j}(\psi )=\widetilde{w}
_{j}(\chi )$ for any $j\in \mathbb{Z}/e\mathbb{Z}$ when $\widehat{l}_{0}>1.$
In particular 
\begin{equation}
\widehat{a}_{i}(\chi )>0\Longrightarrow a_{j}(\chi )=a_{j}(\widehat{f}
_{i}\chi )\text{ for any }j\in \mathbb{Z}/e\mathbb{Z}.  \label{case_l>1}
\end{equation}

Now assume $\widehat{l}_{0}=1,$ that is $\psi =\chi +[i]$. Write 
\begin{equation*}
\widehat{w}_{i}(\chi )=[i]^{m_{[i]}}[i+1]^{m_{[i+1]}}\cdots \text{ and } 
\widehat{w}_{i}(\psi )=[i]^{m_{[i]}+1}[i+1]^{m_{[i+1]}}\cdots
\end{equation*}
with $m_{[i]}\geq m_{[i+1]}$.

\noindent When $j=i+1(\mathrm{mod}e),$ $[i]$ is of type $A$ for $j$ and $
[i+1]$ is of type $R$. Thus we can write 
\begin{equation*}
w_{j}(\chi )=[i+1]^{m_{[i+1]}}[i]^{m_{[i]}}\cdots \text{ and }w_{j}(\psi
)=[i+1]^{m_{[i+1]}}[i]^{m_{[i]}+1}\cdots
\end{equation*}
Since $m_{[i]}\geq m_{[i+1]},$ the rightmost segments $A$ in $\widetilde{w}
_{j}(\chi )$ and $\widetilde{w}_{j}(\psi )$ are the same and we have yet $
l_{0,j}(\chi )=l_{0,j}(\psi ).$

\noindent When $j=i(\mathrm{mod}e),$ $[i]$ is of type $R$ for $j.$ Observe
that $\widehat{a}_{i}(\chi )=0$. Set $\widetilde{w}_{i}(\chi )=A^{a_{i}(\chi
)}R^{r_{i}(\chi )}.$ Then $\widetilde{w}_{i}(\psi )$ is obtained by applying
the cancellation procedure of the factors $RA$ to the word $w=RA^{a_{i}(\chi
)}R^{r_{i}(\chi )}$. Clearly, $l_{0,j}(\chi )\neq l_{0,j}(\psi )$ if and
only if $a_{i}(\chi )=1$ for in this case we have $l_{0,j}(\chi )>1$ and $
l_{0,j}(\psi )=1$. This proves assertion 1.

2: The arguments are similar to those used in the proof of 1.
\end{proof}

\begin{proposition}
\label{prop_com_fi_fihat}For any $\chi \in \Psi _{e}$ and $i,j\in \mathbb{Z}
/e\mathbb{Z}$ ,\ we have $\widetilde{f}_{i}\widehat{f}_{j}\chi \neq \widehat{
f}_{j}\widetilde{f}_{i}\chi \Longleftrightarrow i=j$ and $a_{i}(\chi )+ 
\widehat{a}_{i}(\chi )=1.$
\end{proposition}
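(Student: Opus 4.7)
The plan is to reduce the commutator identity $\widetilde{f}_{i}\widehat{f}_{j}\chi =\widehat{f}_{j}\widetilde{f}_{i}\chi $ to a simultaneous invariance statement for the parameters $l_{0,i}$ and $\widehat{l}_{0,j}$, and then to read off the exceptional regime from Lemma \ref{lem_tech}. The starting point is that, by Lemma \ref{lem_util} and its $\widehat{f}$-analog, the increments
\begin{equation*}
\Delta _{i}(\chi ):=\widetilde{f}_{i}\chi -\chi ,\qquad \widehat{\Delta }_{j}(\chi ):=\widehat{f}_{j}\chi -\chi
\end{equation*}
are formal sums of segments depending only on $l_{0,i}(\chi )$ and $\widehat{l}_{0,j}(\chi )$, respectively. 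Consequently
\begin{equation*}
\widetilde{f}_{i}\widehat{f}_{j}\chi -\widehat{f}_{j}\widetilde{f}_{i}\chi =\bigl(\Delta _{i}(\widehat{f}_{j}\chi )-\Delta _{i}(\chi )\bigr)+\bigl(\widehat{\Delta }_{j}(\chi )-\widehat{\Delta }_{j}(\widetilde{f}_{i}\chi )\bigr),
\end{equation*}
so commutation is equivalent to the simultaneous invariances $l_{0,i}(\widehat{f}_{j}\chi )=l_{0,i}(\chi )$ and $\widehat{l}_{0,j}(\widetilde{f}_{i}\chi )=\widehat{l}_{0,j}(\chi )$.

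Lemma \ref{lem_tech} then identifies exactly when these invariances fail: the first fails iff $i=j$, $\widehat{a}_{i}(\chi )=0$, and $a_{i}(\chi )=1$, and the second fails iff $i=j$, $a_{i}(\chi )=0$, and $\widehat{a}_{i}(\chi )=1$. The union of these conditions is precisely ``$i=j$ and $a_{i}(\chi )+\widehat{a}_{i}(\chi )=1$''. In every other case---in particular whenever $i\neq j$---both parameters are preserved, the displayed formal difference vanishes, and the operators commute. This gives one direction of the claimed equivalence.

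For the converse I would verify non-commutation explicitly in the two exceptional subcases. Consider $(a_{i}(\chi ),\widehat{a}_{i}(\chi ))=(1,0)$ with $i=j$: the degenerate clause of the $\widehat{f}$-analog of Lemma \ref{lem_util} gives $\widehat{f}_{i}\chi =\chi +[i]$, and Lemma \ref{lem_tech}(1) shows that $a_{i}(\widehat{f}_{i}\chi )=0$, hence $\widetilde{f}_{i}\widehat{f}_{i}\chi =\chi +2[i]$. On the other hand, the non-degenerate clause of Lemma \ref{lem_util} produces a $\widetilde{f}_{i}\chi $ that ``extends'' the unique surviving $A$-segment, and Lemma \ref{lem_tech}(2) preserves $\widehat{a}_{i}=0$ in $\widetilde{f}_{i}\chi $, so $\widehat{f}_{i}\widetilde{f}_{i}\chi =\widetilde{f}_{i}\chi +[i]$. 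Since $\Delta _{i}(\chi )$ involves a segment strictly longer than the singleton $[i]$, these two multisegments are distinct. The subcase $(a_{i}(\chi ),\widehat{a}_{i}(\chi ))=(0,1)$ is symmetric via Lemma \ref{lem_tech}(2) and the $\widehat{f}$-analog of Lemma \ref{lem_util}.

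The main obstacle is confirming that in each exceptional subcase the two compositions yield genuinely distinct multisegments rather than accidentally coinciding formal sums. This reduces to verifying that whenever $a_{i}(\chi )\geq 1$, the formula $\Delta _{i}(\chi )=(l_{0};i]-(l_{0}-1;i-1]$ of Lemma \ref{lem_util} differs from $[i]$, which follows by direct inspection.
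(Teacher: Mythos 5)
Your proposal is correct and follows essentially the same route as the paper: both arguments rest on Lemma \ref{lem_tech} to show that outside the regime $i=j$, $a_{i}(\chi )+\widehat{a}_{i}(\chi )=1$ the parameters $l_{0,i}$ and $\widehat{l}_{0,j}$ are preserved (so the two compositions produce the identical multisegment), and both then verify non-commutation in the two exceptional subcases by writing out $\chi +2[i]$ versus $\chi +[i]+(l_{0};i]-(l_{0}-1;i-1]$ with $l_{0}>1$. Your increment decomposition via $\Delta _{i}$ and $\widehat{\Delta }_{j}$ is just a reformulation of the paper's direct comparison of the two composite formulas, so there is nothing substantively new to reconcile.
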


\begin{proof}
Assume $i\neq j$ or, $i=j$ and $a_{i}(\chi )+\widehat{a}_{i}(\chi )>1.$ Then
by assertions 1 and 2 of the previous lemma, we have $\widehat{l}_{0,j}(\chi
)=\widehat{l}_{0,j}(\widetilde{f}_{i}\chi )=\widehat{l}_{0}$ and $
l_{0,i}(\chi )=l_{0,i}(\widehat{f}_{j}\chi )=l_{0}.$ Hence 
\begin{equation*}
\widetilde{f}_{i}\widehat{f}_{j}\chi =\chi +[j;\widehat{l}
_{0})+(l_{0};i]-[j+1;\widehat{l}_{0}-1)-(l_{0}-1;i-1]=\widehat{f}_{j} 
\widetilde{f}_{i}\chi
\end{equation*}
with $[j+1;\widehat{l}_{0}-1)=\emptyset $ if $\widehat{l}_{0}=1$ and $
(l_{0}-1;i-1]=\emptyset $ if $l_{0}=1.$

Now, assume $i=j$, $a_{i}(\chi )=1$ and $\widehat{a}_{i}(\chi )=0.$ We have 
\begin{equation*}
\widetilde{f}_{i}\widehat{f}_{i}\chi =\chi +2[i]\text{ and }\widehat{f}_{i} 
\widetilde{f}_{i}\chi =\chi +[i]+[i-l_{0}+1,\cdots,i]-[i-l_{0}+1,\cdots,i-1]
\end{equation*}
with $l_{0}=l_{0,i}(\chi )>1.$ Similarly, if we assume $i=j$, $a_{i}(\chi
)=1 $ and $\widehat{a}_{i}(\chi )=0,$ we obtain 
\begin{equation*}
\widehat{f}_{i}\widetilde{f}_{i}\chi =\chi +2[i]\text{ and }\widetilde{f}
_{i} \widehat{f}_{i}\chi =\chi +[i]+[i+1,\cdots,i+\widehat{l}_{0}-1]-[i,\cdots,i+\widehat{l
} _{0}-1]
\end{equation*}
with $\widehat{l}_{0}=\widehat{l}_{0,i}(\chi )>1.$ In both cases, $
\widetilde{f}_{i}\widehat{f}_{j}\chi \neq \widehat{f}_{i}\widetilde{f}
_{i}\chi $ which completes the proof. Observe that we then have 
\begin{equation}
\widetilde{f}_{i}\widehat{f}_{i}\chi =(\widehat{f}_{i})^{2}\chi \text{ and } 
\widehat{f}_{i}\widetilde{f}_{i}\chi =(\widetilde{f}_{i})^{2}\chi .
\label{case_ad^}
\end{equation}
\end{proof}

\begin{proposition}
\label{prop_com_fi_fi*}Consider $\psi \in \Psi _{e}$ and $i,j\in \mathbb{Z}
/e \mathbb{Z}$.

\begin{enumerate}
\item If $i\neq j,$ we have $\widetilde{f}_{i}\widetilde{f}_{j}^{\ast }\psi
= \widetilde{f}_{j}^{\ast }\widetilde{f}_{i}\psi $.

\item If $i=j,$ set $m=\varepsilon _{i}^{\ast }(\psi ).$ Then $\widetilde{f}
_{i}\widetilde{f}_{i}^{\ast }\psi \neq \widetilde{f}_{i}^{\ast }\widetilde{f}
_{i}\psi \Longleftrightarrow \varphi _{i}((\widetilde{e}_{i}^{\ast
})^{m}\psi )=\varepsilon _{i}^{\ast }(\psi )+1.$
\end{enumerate}
\end{proposition}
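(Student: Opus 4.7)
The strategy is to compute $\theta_i$ on both sides of each (non)equality, using the embedding $\theta_i\colon B_e(\infty)\hookrightarrow B_e(\infty)\otimes B_i$, Proposition \ref{Prop_ac*}, and the tensor product rule (\ref{TENS1}). Setting $m=\varepsilon_i^\ast(\psi)$ and $b=(\widetilde{e}_i^\ast)^m\psi$, Proposition \ref{Prop_ac*} yields the canonical decomposition $\theta_i(\psi)=b\otimes\widetilde{f}_i^m b_i$ with $\varepsilon_i^\ast(b)=0$, and $\theta_i(\widetilde{f}_i^\ast\psi)=b\otimes\widetilde{f}_i^{m+1}b_i$. Since $\theta_i$ is injective and any element of its image admits a unique expression of the form $c\otimes\widetilde{f}_i^k b_i$ with $\varepsilon_i^\ast(c)=0$, we may read off $\varepsilon_i^\ast$ and then apply Proposition \ref{Prop_ac*} a second time.

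For part (1), where $i\neq j$, the rule (\ref{TENS1}) forces $\widetilde{f}_j$ to act on the first factor in $B_e(\infty)\otimes B_i$, because $\varepsilon_j(v)=-\infty$ for every $v\in B_i$. Hence $\theta_i(\widetilde{f}_j\psi)=\widetilde{f}_j b\otimes\widetilde{f}_i^m b_i$, which by uniqueness of the canonical form gives $\varepsilon_i^\ast(\widetilde{f}_j\psi)=m$ and $(\widetilde{e}_i^\ast)^m\widetilde{f}_j\psi=\widetilde{f}_j b$. Proposition \ref{Prop_ac*}(2) then yields $\theta_i(\widetilde{f}_i^\ast\widetilde{f}_j\psi)=\widetilde{f}_j b\otimes\widetilde{f}_i^{m+1}b_i$; the symmetric computation starting from $\theta_i(\widetilde{f}_i^\ast\psi)=b\otimes\widetilde{f}_i^{m+1}b_i$ produces $\theta_i(\widetilde{f}_j\widetilde{f}_i^\ast\psi)=\widetilde{f}_j b\otimes\widetilde{f}_i^{m+1}b_i$, and injectivity of $\theta_i$ concludes.

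For part (2), apply (\ref{TENS1}) to $\theta_i(\psi)$ and to $\theta_i(\widetilde{f}_i^\ast\psi)$, using $\varepsilon_i(\widetilde{f}_i^m b_i)=m$ and $\varepsilon_i(\widetilde{f}_i^{m+1}b_i)=m+1$. A direct case split on the position of $\varphi_i(b)$ gives: if $\varphi_i(b)\leq m$, both $\widetilde{f}_i\widetilde{f}_i^\ast\psi$ and $\widetilde{f}_i^\ast\widetilde{f}_i\psi$ embed to $b\otimes\widetilde{f}_i^{m+2}b_i$; if $\varphi_i(b)>m+1$, both embed to $\widetilde{f}_i b\otimes\widetilde{f}_i^{m+1}b_i$; only in the intermediate case $\varphi_i(b)=m+1$ does one obtain $b\otimes\widetilde{f}_i^{m+2}b_i$ on one side versus $\widetilde{f}_i b\otimes\widetilde{f}_i^{m+1}b_i$ on the other. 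These are distinct canonical forms (their $B_i$-factors differ, and $\widetilde{f}_i b\neq 0$ since $\varphi_i(b)\geq 1$), which is exactly the announced equivalence $\varphi_i((\widetilde{e}_i^\ast)^m\psi)=\varepsilon_i^\ast(\psi)+1$.

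The one delicate point, recurring in both parts, is that as soon as $\widetilde{f}_i$ acts on the first factor one has to recognize $\widetilde{f}_i b\otimes\widetilde{f}_i^m b_i$ as being itself the canonical decomposition of $\theta_i(\widetilde{f}_i\psi)$, equivalently $\varepsilon_i^\ast(\widetilde{f}_i b)=0$. This is forced by the uniqueness in Proposition \ref{Prop_ac*}(1), but must be invoked explicitly before the second application of that Proposition is legitimate; once it is in hand, the whole argument reduces to bookkeeping with the rule (\ref{TENS1}) in $B_e(\infty)\otimes B_i$.
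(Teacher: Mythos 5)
Your proof is correct and follows essentially the same route as the paper: compute $\theta_i$ (resp.\ $\theta_j$) of both sides via Proposition \ref{Prop_ac*} and the tensor rule (\ref{TENS1}), then compare the resulting decompositions and invoke injectivity of the embedding. The only difference is that you make explicit the uniqueness-of-canonical-form step needed before the second application of Proposition \ref{Prop_ac*}, which the paper leaves implicit.
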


\begin{proof}
1: This is a classical property of crystals.\ Write $\theta _{j}(\psi )=( 
\widetilde{e}_{j}^{\ast })^{m}\psi \otimes \widetilde{f}_{j}^{m}b_{j}$ where 
$m=\varepsilon _{j}^{\ast }(\psi ).$ Then by (\ref{TENS1}), we have $\theta
_{j}(\widetilde{f}_{i}\psi )=\widetilde{f}_{i}(\widetilde{e}_{j}^{\ast
})^{m}\psi \otimes \widetilde{f}_{j}^{m}b_{j}$ for $i\neq j.$ By Proposition 
\ref{Prop_ac*}, we obtain $\theta _{j}(\widetilde{f}_{j}^{\ast }\widetilde{f}
_{i}\psi )=\widetilde{f}_{i}(\widetilde{e}_{j}^{\ast })^{m}\psi \otimes 
\widetilde{f}_{j}^{m+1}b_{j}.$ We have also $\theta _{j}(\widetilde{f}
_{j}^{\ast }\psi )=(\widetilde{e}_{j}^{\ast })^{m}\psi \otimes \widetilde{f}
_{j}^{m+1}b_{j}$ and since $i\neq j,$ this yields $\theta _{j}(\widetilde{f}
_{i}\widetilde{f}_{j}^{\ast }\psi )=\widetilde{f}_{i}(\widetilde{e}
_{j}^{\ast })^{m}\psi \otimes \widetilde{f}_{j}^{m+1}b_{j}$. Hence $\theta
_{j}(\widetilde{f}_{i}\widetilde{f}_{j}^{\ast }\psi )=\theta _{j}(\widetilde{
f}_{j}^{\ast }\widetilde{f}_{i}\psi )$ and we have $\widetilde{f}_{i} 
\widetilde{f}_{j}^{\ast }\psi =\widetilde{f}_{j}^{\ast }\widetilde{f}
_{i}\psi $ because $\theta _{j}$ is an embedding of crystals.

2: We derive by using the same arguments 
\begin{equation*}
\theta _{i}(\widetilde{f}_{i}\widetilde{f}_{i}^{\ast }\psi )=\left\{ 
\begin{array}{l}
\widetilde{f}_{i}(\widetilde{e}_{i}^{\ast })^{m}\psi \otimes \widetilde{f}
_{i}^{m+1}b_{i}\text{ if }\varphi _{i}((\widetilde{e}_{i}^{\ast })^{m}\psi
)>m+1, \\ 
(\widetilde{e}_{i}^{\ast })^{m}\psi \otimes \widetilde{f}_{i}^{m+2}b_{i} 
\text{ if }\varphi _{i}((\widetilde{e}_{i}^{\ast })^{m}\psi )\leq m+1.
\end{array}
\right.
\end{equation*}
We have also 
\begin{equation*}
\theta _{i}(\widetilde{f}_{i}^{\ast }\widetilde{f}_{i}\psi )=\left\{ 
\begin{array}{l}
\widetilde{f}_{i}(\widetilde{e}_{i}^{\ast })^{m}\psi \otimes \widetilde{f}
_{i}^{m+1}b_{i}\text{ if }\varphi _{i}((\widetilde{e}_{i}^{\ast })^{m}\psi
)>m, \\ 
(\widetilde{e}_{i}^{\ast })^{m}\psi \otimes \widetilde{f}_{i}^{m+2}b_{i} 
\text{ if }\varphi _{i}((\widetilde{e}_{i}^{\ast })^{m}\psi )\leq m.
\end{array}
\right.
\end{equation*}
Thus we obtain $\theta _{i}(\widetilde{f}_{i}\widetilde{f}_{i}^{\ast }\psi
)=\theta _{i}(\widetilde{f}_{i}^{\ast }\widetilde{f}_{i}\psi )$ except when $
\varphi _{i}((\widetilde{e}_{i}^{\ast })^{m}\psi )=m+1.$ Observe that we
have in this case 
\begin{equation}
\widetilde{f}_{i}\widetilde{f}_{i}^{\ast }\psi =(\widetilde{f}_{i}^{\ast
})^{2}\psi \neq (\widetilde{f}_{i})^{2}\psi =\widetilde{f}_{i}^{\ast } 
\widetilde{f}_{i}\psi .  \label{case_ad*}
\end{equation}
\end{proof}

\begin{lemma}
\label{lem_phi}Consider $\psi \in \Psi _{e}$ and $i\in \mathbb{Z}/e\mathbb{Z}
$. Set $\mathrm{wt}(\psi )=\sum_{i\in \mathbb{Z}/e\mathbb{Z}}\mathrm{wt}
_{i}(\psi )\Lambda _{i}$. Then we have

\begin{enumerate}
\item $\mathrm{wt}_{i}(\psi )=a_{i}(\psi )-r_{i}(\psi )+\widehat{a}_{i}(\psi
)-\widehat{r}_{i}(\psi ),$

\item $\varphi _{i}(\psi )=a_{i}(\psi )+\widehat{a}_{i}(\psi )-\widehat{r}
_{i}(\psi )$.
\end{enumerate}
\end{lemma}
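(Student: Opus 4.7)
The plan is to compute the weight $\mathrm{wt}(\psi)$ segment by segment, pair with the simple coroot $h_i$, and then regroup the contributions in terms of the head/tail combinatorics that define $a_i,r_i,\widehat{a}_i,\widehat{r}_i$. Part (2) will follow from part (1) together with Lemma \ref{lem_util}(1).

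\textbf{Step 1 (Contribution of a single segment).} A segment with head $h$ and tail $t$ (so of residues $h,h+1,\dots,t$ taken modulo $e$) is built from the empty multisegment by a chain of $\widetilde{f}_j$'s, and therefore contributes $-(\alpha_h+\alpha_{h+1}+\cdots+\alpha_t)$ to $\mathrm{wt}(\psi)$. Using $\alpha_m\equiv 2\Lambda_m-\Lambda_{m-1}-\Lambda_{m+1}\pmod{\mathbb{Z}\delta}$, the sum telescopes in $\overline{P}$ to
\begin{equation*}
-(\alpha_h+\cdots+\alpha_t)\equiv \Lambda_{h-1}-\Lambda_h-\Lambda_t+\Lambda_{t+1}\pmod{\mathbb{Z}\delta},
\end{equation*}
where all indices are read modulo $e$. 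Equivalently, pairing with $h_i$ gives
\begin{equation*}
-\delta_{i,h}-\delta_{i,t}+\delta_{i,h-1}+\delta_{i,t+1}.
\end{equation*}
Note that for the one-box segment $[j]$ (where $h=t=j$) this correctly returns $-2\delta_{i,j}+\delta_{i,j-1}+\delta_{i,j+1}$.

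\textbf{Step 2 (Summing over segments).} Summing the Step 1 formula over all segments of $\psi$ splits $\mathrm{wt}_i(\psi)$ into a ``tail part'' and a ``head part'':
\begin{equation*}
\mathrm{wt}_i(\psi)=\Big(-\!\!\sum_{l\geq 1} m_{(l;i]}+\sum_{l\geq 1} m_{(l;i-1]}\Big)+\Big(-\!\!\sum_{l\geq 1} m_{[i;l)}+\sum_{l\geq 1} m_{[i+1;l)}\Big).
\end{equation*}

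\textbf{Step 3 (Identifying the two halves with $a_i-r_i$ and $\widehat{a}_i-\widehat{r}_i$).} The word $w_i$ contains $\sum_l m_{(l;i]}$ letters $R$ and $\sum_l m_{(l;i-1]}$ letters $A$. The reduction to $\widetilde{w}_i=A^{a_i(\psi)}R^{r_i(\psi)}$ proceeds by cancelling $RA$ factors, which preserves $\#A-\#R$. Hence
\begin{equation*}
a_i(\psi)-r_i(\psi)=\sum_{l\geq 1} m_{(l;i-1]}-\sum_{l\geq 1} m_{(l;i]},
\end{equation*}
and the analogous identity for the hatted operators gives $\widehat{a}_i(\psi)-\widehat{r}_i(\psi)=\sum_l m_{[i+1;l)}-\sum_l m_{[i;l)}$. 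Plugging these into Step 2 proves (1).

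\textbf{Step 4 (Part (2)) and main difficulty.} By definition $\varphi_i(\psi)=\mathrm{wt}_i(\psi)+\varepsilon_i(\psi)$, and by Lemma \ref{lem_util}(1) one has $\varepsilon_i(\psi)=r_i(\psi)$. Substituting the formula from (1) yields $\varphi_i(\psi)=a_i(\psi)+\widehat{a}_i(\psi)-\widehat{r}_i(\psi)$. The only mild subtlety is in Step 1 for segments of length $\geq e$: residues then repeat, but the telescoping identity above still holds in $\overline{P}$ because the indices of the $\Lambda$'s are read modulo $e$; the endpoint formula is therefore valid uniformly in $l$. The case $e=2$ requires minor adjustment of the Cartan entries but leads to the same conclusion.
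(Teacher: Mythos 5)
Your proof is correct, and it reaches the key identity of the paper's argument by a different computation. Both proofs reduce part (1) to the statement that $\mathrm{wt}_i(\psi)$ equals
\begin{equation*}
\Delta _{i}(\psi )=\sum_{l\geq 1}\bigl(m_{(l;i-1]}-m_{(l;i]}\bigr)+\sum_{l\geq
1}\bigl(m_{[i+1;l)}-m_{[i;l)}\bigr),
\end{equation*}
using exactly your Step 3 observation that deleting $RA$ (resp.\ $\widehat{R}\widehat{A}$) factors preserves $\#A-\#R$; and both deduce part (2) from part (1) together with $\varepsilon _{i}(\psi )=r_{i}(\psi )$ from Lemma \ref{lem_util}. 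Where you differ is in how $\mathrm{wt}_{i}(\psi )=\Delta _{i}(\psi )$ is established: the paper argues by induction on $\left| \psi \right| $, checking that an application of $\widetilde{f}_{j}$ changes $\mathrm{wt}_{i}$ and $\Delta _{i}$ by the same amount $(-2,+1,0)$ according to whether $i=j$, $i=j\pm 1$, or neither; you instead write the weight in closed form as a sum over segments of the telescoped quantity $\Lambda _{h-1}-\Lambda _{h}-\Lambda _{t}+\Lambda _{t+1}$ and regroup by heads and tails. Your route makes the head/tail symmetry of the formula transparent and handles all segment lengths (including $l\geq e$, where the telescoping correctly collapses to $0$ for $l$ a multiple of $e$) and the degenerate index collisions at $e=2$ uniformly. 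The one point you gloss over is the opening claim of Step 1: the weight of $\psi$ equals $-\sum \alpha _{\mathrm{res}(\gamma )}$ summed over all boxes $\gamma$ of $\psi$ because any path $\psi =\widetilde{f}_{i_{1}}\cdots \widetilde{f}_{i_{n}}\boldsymbol{\emptyset}$ gives $\mathrm{wt}(\psi )=-\sum_{k}\alpha _{i_{k}}$ and each $\widetilde{f}_{j}$ adds exactly one box of residue $j$ (visible from Lemma \ref{lem_util}(2)); this is a one-line induction, but it should be said, since a single segment of $\psi$ is not literally "built by a chain of $\widetilde{f}_{j}$'s" independently of the rest of the multisegment. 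With that sentence added, the argument is complete.
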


\begin{proof}
1: Set 
\begin{equation*}
\psi =\sum_{l\geq 1}m_{(l;i]}(l;i]=\sum_{l\geq 1}m_{[i;l)}[i;l).
\end{equation*}
During the cancellation procedure described in \S\ \ref{subsec_cancel},
pairs of segments $(R,A)$ or $(\widehat{R},\widehat{A})$ are deleted. Thus
assertion 1 is equivalent to the equality $\mathrm{wt}_{i}(\psi )=\Delta
_{i}(\psi )$ where 
\begin{equation}
\Delta _{i}(\psi )=\sum_{l\geq 1}m_{(l;i-1]}-m_{(l;i]}+\sum_{l\geq
1}m_{[i+1;l)}-m_{[i;l)}.  \label{equa}
\end{equation}
We proceed by induction on $\left| \psi \right| .$ For $\psi =\emptyset ,$ (\ref{equa}) is satisfied.\ Now assume the equalities (\ref{equa}) hold for
any $i\in \mathbb{Z}/e\mathbb{Z}$ with $\left| \psi \right| =n$.\ Set $\psi
^{\prime }=\widetilde{f}_{j}\psi $. We have $\mathrm{wt}(\psi ^{\prime })= 
\mathrm{wt}(\psi )-\alpha _{j}$. Since $\alpha _{j}=2\Lambda _{j}-\Lambda
_{j+1}-\Lambda _{j-1}$, this gives 
\begin{equation}
\mathrm{wt}_{i}(\psi ^{\prime })=\left\{ 
\begin{array}{ll}
\mathrm{wt}_{i}(\psi ) & \text{ if }i\notin \{j-1,j,j+1\}, \\ 
\mathrm{wt}_{i}(\psi )-2 & \text{ if }i=j, \\ 
\mathrm{wt}_{i}(\psi )+1 & \text{ if }i\in \{j-1,j+1\}.
\end{array}
\right.  \label{rel}
\end{equation}
The multisegment $\psi ^{\prime }$ is obtained by adding the segments $[j]$
to $\psi $ or by replacing a segment $(l-1;j-1]$ in $\psi $ by the segment $
(l,j].$ This shows that the relations (\ref{rel}) are also satisfied by the $
\Delta _{i}(\psi ^{\prime })$'s. Hence $\Delta _{i}(\psi ^{\prime })=\mathrm{
\ wt}_{i}(\psi ^{\prime })$ for any $i\in \mathbb{Z}/e\mathbb{Z}$.

2: By (\ref{weight}), we have $\mathrm{wt}_{i}(\psi )=\varphi _{i}(\psi
)-\varepsilon _{i}(\psi )$. Lemma \ref{lem_util} then gives $\mathrm{wt}
_{i}(\psi )=\varphi _{i}(\psi )-r_{i}(\psi )$. Thus $\varphi _{i}(\psi
)=a_{i}(\psi )+\widehat{a}_{i}(\psi )-\widehat{r}_{i}(\psi )$ by 1.
\end{proof}

\bigskip

To prove (\ref{equality}), we are going to proceed by induction on $n=\left|
\psi \right| .$ We easily check that $\widetilde{f}_{i}^{\ast }(\boldsymbol{
\ \emptyset})=\widehat{f}_{i}(\boldsymbol{\emptyset})=[i]$ for the empty
multisegment $\boldsymbol{\emptyset}.\;$Now assume that $\widetilde{f}
_{i}^{\ast }(\psi )=\widehat{f}_{i}(\psi )$ holds for any multisegment $\psi
\in \Psi _{e}$ such that $\left| \psi \right| \leq n.\;$

\begin{proposition}
\label{prop_fund}Under the previous induction hypothesis we have for any $
\chi \in \Psi _{e}$ such that $\left| \chi \right| \leq n$ 
\begin{equation}
\widetilde{f}_{i}\widehat{f}_{j}\chi \neq \widehat{f}_{j}\widetilde{f}
_{i}\chi \Longleftrightarrow \widetilde{f}_{i}\widetilde{f}_{j}^{\ast }\chi
\neq \widetilde{f}_{j}^{\ast }\widetilde{f}_{i}\chi \Longleftrightarrow i=j 
\text{ and }a_{i}(\chi )+\widehat{a}_{i}(\chi )=1.  \label{equ_rec}
\end{equation}
\end{proposition}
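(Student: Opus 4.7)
The first equivalence in \eqref{equ_rec} is exactly the content of Proposition \ref{prop_com_fi_fihat}, so the real task is to establish the second equivalence. When $i\neq j$, Proposition \ref{prop_com_fi_fi*}(1) gives $\widetilde{f}_i\widetilde{f}_j^{\ast}\chi=\widetilde{f}_j^{\ast}\widetilde{f}_i\chi$, and the right-hand condition fails as well, so both sides are simultaneously false. Hence we may assume $i=j$, and our objective reduces to showing
\[
\widetilde{f}_i\widetilde{f}_i^{\ast}\chi\neq\widetilde{f}_i^{\ast}\widetilde{f}_i\chi\;\Longleftrightarrow\; a_i(\chi)+\widehat{a}_i(\chi)=1.
\]

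By Proposition \ref{prop_com_fi_fi*}(2) the non-commutation on the left is equivalent to $\varphi_i((\widetilde{e}_i^{\ast})^m\chi)=m+1$ with $m=\varepsilon_i^{\ast}(\chi)$. Since $|\chi|\leq n$ and $|(\widetilde{e}_i^{\ast})^k\chi|\leq n$ for every $0\leq k\leq m$, the induction hypothesis $\widetilde{f}_i^{\ast}=\widehat{f}_i$ applies along the whole chain and lets us replace $\widetilde{e}_i^{\ast}$ by $\widehat{e}_i$; in particular $\varepsilon_i^{\ast}(\chi)=\widehat{\varepsilon}_i(\chi)=\widehat{r}_i(\chi)=m$ and $\chi_0:=(\widetilde{e}_i^{\ast})^m\chi=\widehat{e}_i^m\chi$ satisfies $\widehat{r}_i(\chi_0)=0$. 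Then Lemma \ref{lem_phi}(2) rewrites the condition as
\[
\varphi_i(\chi_0)=a_i(\chi_0)+\widehat{a}_i(\chi_0)=m+1.
\]
So it remains to prove $a_i(\chi_0)+\widehat{a}_i(\chi_0)=m+1 \Longleftrightarrow a_i(\chi)+\widehat{a}_i(\chi)=1$.

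I would handle this by tracking the triple $(a_i,\widehat{a}_i,r_i)$ along the $\widehat{f}_i$-chain $\chi_0\to\widehat{f}_i\chi_0\to\cdots\to\widehat{f}_i^m\chi_0=\chi$. Set $a_0=a_i(\chi_0)$ and $\widehat{a}_0=\widehat{a}_i(\chi_0)$. The first $\widehat{a}_0$ steps fall in case B of $\widehat{f}_i$ (because $\widehat{a}_i>0$ throughout this phase); by the proof of Lemma \ref{lem_tech}(1), $\widetilde{w}_i$ is then fully preserved, so $a_i$ and $r_i$ are constant while $\widehat{a}_i$ drops by one at each step. The remaining $m-\widehat{a}_0$ steps are in case A, where $\widehat{f}_i$ adds $[i]$ at the front of $w_i$: using the explicit cancellation procedure of Section \ref{subsec_cancel}, this prepended $R$ kills a surviving $A$ when $a_i>0$ (decreasing $a_i$ by one, $r_i$ unchanged) and otherwise increases $r_i$ by one. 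Putting the two phases together gives the closed form
\[
a_i(\chi)+\widehat{a}_i(\chi)=\max\bigl(a_0+\widehat{a}_0-m,\,0\bigr),
\]
so both conditions coincide with $a_0+\widehat{a}_0=m+1$, finishing the proof.

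The main obstacle is the careful bookkeeping in the case-A phase: one must verify, using the structure of the reduced signature $\widetilde{w}_i=A^{a_i}R^{r_i}$ and the fact that the prepended $R$ sits at length $1$ (to the left of every surviving $A$), that the displayed formula holds and in particular that $r_i(\chi_0)=r_i(\chi)$ exactly in the regime $m\leq a_0+\widehat{a}_0$. Outside that regime the condition $\varphi_i(\chi_0)=m+1$ fails automatically, matching the fact that $a_i(\chi)+\widehat{a}_i(\chi)=0$ there.
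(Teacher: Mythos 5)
Your argument is correct and follows essentially the same route as the paper: both reduce the claim via Propositions \ref{prop_com_fi_fihat} and \ref{prop_com_fi_fi*} to evaluating $\varphi_i(\chi_0)$ for $\chi_0=(\widehat{e}_i)^{m}\chi$ and then apply Lemma \ref{lem_phi}(2). The one simplification you miss is that each application of $\widehat{e}_i$ converts an $\widehat{R}$ into an $\widehat{A}$, so $\widehat{a}_i(\chi_0)=\widehat{a}_i(\chi)+m\geq m$; hence every step of your $\widehat{f}_i$-chain is in case B (the case-A phase you single out as ``the main obstacle'' is vacuous), and one obtains $\varphi_i(\chi_0)=a_i(\chi)+\widehat{a}_i(\chi)+m$ directly, exactly as in the paper's proof.
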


\begin{proof}
Note first that the proposition does not directly follows from the induction
hypothesis for $\left| \widetilde{f}_{j}\chi \right| =n+1.$ By this
induction hypothesis, we have $(\widetilde{e}_{i}^{\ast })^{\varepsilon
_{i}^{\ast }(\chi )}\chi =(\widehat{e}_{i})^{\widehat{r}_{i}(\chi )}\chi .$
Set $\chi ^{\prime }=(\widehat{e}_{i})^{\widehat{r}_{j}(\chi )}\chi .$
Assertion 2 of Lemma \ref{lem_phi} gives 
\begin{equation*}
\varphi _{i}(\chi ^{\prime })=a_{i}(\chi ^{\prime })+\widehat{a}_{i}(\chi
^{\prime })-\widehat{r}_{i}(\chi ^{\prime })=a_{i}(\chi ^{\prime })+\widehat{
a}_{i}(\chi )+\widehat{r}_{i}(\chi )
\end{equation*}
for we have $\widehat{r}_{i}(\chi ^{\prime })=0$ and $\widehat{a}_{i}(\chi
^{\prime })=\widehat{a}_{i}(\chi )+\widehat{r}_{i}(\chi )$. Observe that $
\varepsilon _{i}^{\ast }(\chi )=\widehat{r}_{i}(\chi )$ be the induction
hypothesis. Moreover, we have $a_{i}(\chi )=a_{i}(\chi ^{\prime })$ by (\ref
{case_l>1}) since $\widehat{a}_{i}(\varphi )>0$ for any $\varphi =(\widehat{
e }_{i})^{a}\chi $ with $a\in \{1,\ldots ,\widehat{r}_{j}(\chi )\}$.\ This
gives the equivalences 
\begin{equation*}
\varphi _{i}(\chi ^{\prime })=\varepsilon _{i}^{\ast }(\chi
)+1\Longleftrightarrow a_{i}(\chi ^{\prime })+\widehat{a}_{i}(\chi
)=1\Longleftrightarrow a_{i}(\chi )+\widehat{a}_{i}(\chi )=1.
\end{equation*}
Now Propositions \ref{prop_com_fi_fihat} and \ref{prop_com_fi_fi*} yields ( 
\ref{equ_rec}).
\end{proof}

We immediately derive by using (\ref{rel_tilde_hat}) : {\ We are now able to
prove the main result of this section.}

\begin{theorem}
For any multisegment $\psi \in \Psi _{e}$ and any $j\in \mathbb{Z}/e\mathbb{
Z },$ we have $\widetilde{f}_{j}^{\ast }(\psi )=\widehat{f}_{j}(\psi ).$
\end{theorem}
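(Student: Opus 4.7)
The plan is to proceed by induction on $n = |\psi|$. The base case $n = 0$ is immediate since $\widetilde{f}_j^*(\boldsymbol{\emptyset}) = [j] = \widehat{f}_j(\boldsymbol{\emptyset})$, as already observed in the text preceding Proposition \ref{prop_fund}. For the inductive step, I would assume $\widetilde{f}_j^*(\chi) = \widehat{f}_j(\chi)$ for every $\chi \in \Psi_e$ with $|\chi| \leq n$ and every $j \in \mathbb{Z}/e\mathbb{Z}$, and take $\psi$ with $|\psi| = n+1$. Since $\psi \neq \boldsymbol{\emptyset}$, I would pick some $i$ with $\widetilde{e}_i \psi \neq 0$ and set $\chi = \widetilde{e}_i \psi$, so that $\psi = \widetilde{f}_i \chi$ with $|\chi| = n$. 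The task then reduces to verifying $\widetilde{f}_j^* \widetilde{f}_i \chi = \widehat{f}_j \widetilde{f}_i \chi$, which I would tackle by invoking Proposition \ref{prop_fund} (whose hypothesis is precisely the present induction hypothesis) and splitting into two cases.

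In the generic case, where $i \neq j$, or $i = j$ and $a_i(\chi) + \widehat{a}_i(\chi) \neq 1$, Proposition \ref{prop_fund} gives both commutations $\widetilde{f}_j^* \widetilde{f}_i \chi = \widetilde{f}_i \widetilde{f}_j^* \chi$ and $\widehat{f}_j \widetilde{f}_i \chi = \widetilde{f}_i \widehat{f}_j \chi$. Combining these with the induction hypothesis $\widetilde{f}_j^*(\chi) = \widehat{f}_j(\chi)$ applied to $\chi$ yields the desired equality for $\psi$. In the exceptional case where $i = j$ and $a_i(\chi) + \widehat{a}_i(\chi) = 1$, neither pair commutes, but the explicit identities (\ref{case_ad^}) and (\ref{case_ad*}) give $\widehat{f}_i \widetilde{f}_i \chi = (\widetilde{f}_i)^2 \chi = \widetilde{f}_i^* \widetilde{f}_i \chi$, so that both $\widetilde{f}_j^*(\psi)$ and $\widehat{f}_j(\psi)$ collapse to the same multisegment $(\widetilde{f}_i)^2 \chi$.

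The substantive work has already been done in Proposition \ref{prop_fund} and in the lemmas preceding it; the induction step itself reduces to the clean two-case analysis above. The conceptual heart of the argument is that the same combinatorial condition on $\chi$ governs the failure of commutativity in both the $(\widetilde{f}_i, \widehat{f}_j)$ and $(\widetilde{f}_i, \widetilde{f}_j^*)$ situations, and that under this condition the two competing products happen to produce the same multisegment. Consequently, I do not anticipate any additional obstacle beyond the two-line case split, the hard work being isolated in the equivalence (\ref{equ_rec}) and in the explicit formulas (\ref{case_ad^})--(\ref{case_ad*}).
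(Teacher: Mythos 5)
Your proposal is correct and follows essentially the same route as the paper: induction on $\left|\psi\right|$, writing $\psi=\widetilde{f}_{i}\chi$, and splitting according to whether Proposition \ref{prop_fund} gives commutation (generic case, concluded via the induction hypothesis) or not (exceptional case, where (\ref{case_ad^}) and (\ref{case_ad*}) show both sides equal $(\widetilde{f}_{i})^{2}\chi$). Your phrasing of the generic case as $a_{i}(\chi)+\widehat{a}_{i}(\chi)\neq 1$ is in fact slightly cleaner than the paper's $>1$, since it explicitly covers the commuting case $a_{i}(\chi)+\widehat{a}_{i}(\chi)=0$.
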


\begin{proof}
{\ We argue by induction on $n=\left| \psi \right| $. We already know that
for all $j\in \mathbb{Z}/e\mathbb{Z}$, we have $\widetilde{f}_{j}^{\ast
}(\emptyset )=\widehat{f}_{j}(\emptyset )=[j].$ Now assume $\widetilde{f}
_{j}^{\ast }\chi =\widehat{f}_{j}\chi $ for any $j\in \mathbb{Z}/e\mathbb{Z}$
and any $\chi \in \Psi _{e}$ such that $\left| \chi \right| \leq n.$
Consider }$\psi \in \Psi _{e}$ such that $\left| \psi \right| =n+1.\ ${There
exists $i\in \mathbb{Z}/e\mathbb{Z}$ and $\chi \in \Psi _{e}$ such that $
\psi =\widetilde{f}_{i}\chi $ and }$\left| {\chi }\right| =n.$

When $i\neq j$ or $a_{i}(\chi )+\widehat{a}_{i}(\chi )>1,$ we have by
Proposition \ref{prop_fund} $\widetilde{f}_{j}^{\ast }\psi =\widetilde{f}
_{j}^{\ast }\widetilde{f}_{i}\chi =\widetilde{f}_{i}\widetilde{f}_{j}^{\ast
}\chi .$ By our induction hypothesis, we can thus write $\widetilde{f}
_{j}^{\ast }\psi =\widetilde{f}_{i}\widehat{f}_{j}\chi .$ Since $a_{i}(\chi
)+\widehat{a}_{i}(\chi )>1,$ this finally gives $\widetilde{f}_{j}^{\ast
}\psi =\widehat{f}_{j}\widetilde{f}_{i}\chi =\widehat{f}_{j}\psi .$

\noindent When $i=j$ and $a_{i}(\chi )+\widehat{a}_{i}(\chi )=1,$ we obtain $
\widetilde{f}_{i}^{\ast }\psi =\widetilde{f}_{i}^{\ast }\widetilde{f}
_{i}\chi =\widetilde{f}_{i}{}^{2}\psi $ by (\ref{case_ad*}). Similarly, we
have $\widehat{f}_{i}\psi =\widehat{f}_{i}\widetilde{f}_{i}\chi =\widetilde{
f }_{i}{}^{2}\psi $ by (\ref{case_ad^}). Thus $\widetilde{f}_{i}^{\ast }\psi
= \widehat{f}_{i}\psi $ which completes the proof.
\end{proof}

\begin{remark}
Theorem \ref{Th_f*=fhat} and Proposition \ref{prop_fund} notably imply the
equivalence 
\begin{equation*}
\widetilde{f}_{i}\widetilde{f}_{j}^{\ast }\psi =\widetilde{f}_{j}^{\ast } 
\widetilde{f}_{i}\psi \Leftrightarrow i\neq j\text{ or }a_{i}(\psi )+ 
\widehat{a}_{i}(\psi )>1
\end{equation*}
for any $\psi \in \Psi _{e}$.
\end{remark}

\section{Affine Hecke algebra of type $A$ and Ariki-Koike algebras}

\subsection{Identification of simple modules}

Let $\mathbf{v}=(v_{0},\ldots ,v_{l-1})\in \mathcal{V}_{l}$. The Ariki-Koike
algebra $\mathcal{H}_{n}^{\mathbf{v}}(\xi )$ is the quotient $\mathcal{H}
_{n}^{a}(\xi )/I_{\mathbf{v}}$ where $I_{\mathbf{v}}=\langle P_{\mathbf{v}
}=\prod_{i=0}^{l-1}(X_{1}-\xi ^{v_{i}})\rangle $. Then each simple $\mathcal{
\ H}_{n}^{\mathbf{v}}(\xi )$-module is isomorphic to a simple $\mathcal{H}
_{n}^{a}(\xi )$-module of $\mathrm{Mod}_{n}^{a}$. By the Specht module
theory developed by Dipper, James and Mathas \cite{DJM}, the simple $\mathcal{H}_{n}^{\mathbf{v}}(\xi )$-modules are 
parametrized by certain $l$-partitions of $n$ called Kleshchev multipartitions. Let $\Phi _{e}^{K}( 
\mathbf{v})$ be the set of Kleshchev $l$-partitions. Given $\mathbf{\mu }$
in $\Phi _{e}^{K}(\mathbf{v})$, write $D^{\mathbf{\mu }}$ for the simple $
\mathcal{H}_{n}^{\mathbf{v}}(\xi )$-module associated to $\mathbf{\mu }$
under this parametrization. In fact, we shall need in the sequel the
parametrization of the simple $\mathcal{H}_{n}^{\mathbf{v}}(\xi )$-modules
by FLOTW $l$-partitions. The correspondence between the parametrizations by
Kleshchev and FLOTW $l$-partitions has been detailed in \cite{JL}. In
particular, the bijection $\Gamma :\Phi _{e}(\mathbf{v})\rightarrow \Phi
_{e}^{K}(\mathbf{v})$ is an isomorphism of $\mathcal{U}_{v}$-crystals which
can easily be made explicit. This means that, given any ${\boldsymbol{
\lambda }}$ in $\Phi _{e}(\mathbf{v}),$ we can compute $\Gamma ({\boldsymbol{
\lambda} )}$ directly from ${\boldsymbol{\lambda}}$ without using the
crystal structures on $\Phi _{e}(\mathbf{v})$ and $\Phi _{e}^{K}(\mathbf{v})$
. We then set $\widetilde{D}^{{\boldsymbol{\lambda}}}=D^{\Gamma ({
\boldsymbol{\lambda})}}$. This gives the natural labelling 
\begin{equation*}
\text{Irr}(\mathcal{H}_{n}^{\mathbf{v}}(\xi ))=\{\widetilde{D}^{{\boldsymbol{
\lambda}}}\ |\ {\boldsymbol{\lambda}}\in \Phi _{e}(\mathbf{v})\}
\end{equation*}
{which coincides with the parametrization of the simple $\mathcal{H}_{n}^{ 
\mathbf{v}}(\xi )$-modules in terms of Geck-Rouquier canonical basic set
obtained in \cite{J}.}

\noindent The simple $\mathcal{H}_{n}^{a}(\xi)$-module $L_{\psi}$ with $
\psi\in\Psi_{e}$ isomorphic to $\widetilde{D}^{{\boldsymbol{\lambda}}}$ is
given by the following theorem (see \cite[Thm 6.2]{AJL}).

\begin{theorem}
Let ${\boldsymbol{\lambda}}\in \Phi _{e}(\mathbf{v})$ then 
\begin{equation*}
\widetilde{D}^{{\boldsymbol{\lambda}}}\simeq L_{f_{\mathbf{v}}({\boldsymbol{
\lambda}})}
\end{equation*}
where $f_{v}$ is the crystal embedding of Theorem \ref{Th_fv}.
\end{theorem}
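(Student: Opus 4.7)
The plan is to identify two parametrizations of the same set of simple $\mathcal{H}_n^a(\xi)$-modules --- those which factor through the quotient $\mathcal{H}_n^{\mathbf{v}}(\xi) = \mathcal{H}_n^a(\xi)/I_{\mathbf{v}}$ --- by realizing both as vertices of the crystal $B_e(\mathbf{v})$ with matching modular branching structure. The whole argument is an exercise in ``transport of crystal structure.''

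First, I would recall the two crystals at play. By Grojnowski's theorem, the set $\mathrm{Irr}(\mathcal{H}_n^a(\xi))$ (restricted to $\mathrm{Mod}_n^a$ and graded over $n$) equipped with the $i$-restriction crystal operators is isomorphic to $B_e(\infty)$, and Ginzburg's geometric construction identifies the vertex labelled $L_\psi$ with the aperiodic multisegment $\psi$. By the cyclotomic version (Ariki/Grojnowski), $\mathrm{Irr}(\mathcal{H}_n^{\mathbf{v}}(\xi))$ with $i$-restriction is isomorphic to $B_e(\mathbf{v})$. Via Dipper--James--Mathas its simples are labelled by Kleshchev $l$-partitions, and via the crystal isomorphism $\Gamma$ of \cite{JL} they are labelled by FLOTW $l$-partitions through $\boldsymbol{\lambda} \mapsto \widetilde{D}^{\boldsymbol{\lambda}} = D^{\Gamma(\boldsymbol{\lambda})}$.

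Next, I would use that every simple $\mathcal{H}_n^{\mathbf{v}}(\xi)$-module inflates to a simple $\mathcal{H}_n^a(\xi)$-module in $\mathrm{Mod}_n^a$ (the eigenvalues of $X_1$ are roots of $P_{\mathbf{v}}$, hence powers of $\xi$). This yields an injection $\iota_n: \mathrm{Irr}(\mathcal{H}_n^{\mathbf{v}}(\xi)) \hookrightarrow \mathrm{Irr}(\mathcal{H}_n^a(\xi))$ which commutes with $i$-restriction, since $i$-restriction only uses the ambient $\mathcal{H}_n^a(\xi)$-action. Hence $(\iota_n)_n$ defines a crystal embedding $\iota: B_e(\mathbf{v}) \hookrightarrow B_e(\infty)$. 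By Theorem \ref{Th_fv}, the unique crystal embedding $B_e(\mathbf{v}) \hookrightarrow B_e(\infty) \otimes T_\Lambda$ is realized by $f_{\mathbf{v}}$, so projecting onto the first tensor factor forces $\iota(\boldsymbol{\lambda}) = f_{\mathbf{v}}(\boldsymbol{\lambda})$ at the level of vertices. Combined with $\iota(\widetilde{D}^{\boldsymbol{\lambda}}) = L_{\iota(\boldsymbol{\lambda})}$ by construction, this gives $\widetilde{D}^{\boldsymbol{\lambda}} \simeq L_{f_{\mathbf{v}}(\boldsymbol{\lambda})}$.

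The main obstacle is verifying that $\iota$ is a crystal morphism, i.e.\ that the good-node modular branching rule for $\mathcal{H}_n^{\mathbf{v}}(\xi)$ agrees with the restriction to $\iota_n(\mathrm{Irr}(\mathcal{H}_n^{\mathbf{v}}(\xi)))$ of the branching rule for $\mathcal{H}_n^a(\xi)$. This is precisely the content of Ariki's categorification theorem, packaged with Grojnowski's abstract crystal construction. Once this is in place, the characterization $f_{\mathbf{v}}(\Phi_e(\mathbf{v})) = \{\psi \in \Psi_e \mid \varepsilon_i(\psi^\ast) \le r_i \text{ for all } i\}$ (Proposition 8.2 of \cite{Kas}) matches the image of $\iota$ on the nose, since these bounds encode exactly the condition that $X_1$ acts with eigenvalues among $\{\xi^{v_0},\ldots,\xi^{v_{l-1}}\}$; this matches the sources of both crystal embeddings and identifies $\iota$ with $f_{\mathbf{v}}$, completing the proof.
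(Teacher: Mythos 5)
First, note that the paper does not actually prove this statement: it is quoted verbatim from \cite[Thm 6.2]{AJL}, so there is no internal proof to compare against. Your sketch follows what is essentially the strategy of that reference (match the two parametrizations through the $i$-restriction crystals and invoke uniqueness of the crystal embedding $B_{e}(\mathbf{v})\hookrightarrow B_{e}(\infty )\otimes T_{\Lambda }$), and the skeleton --- inflation along $\mathcal{H}_{n}^{a}(\xi )\twoheadrightarrow \mathcal{H}_{n}^{\mathbf{v}}(\xi )$ commutes with $i$-restriction, hence induces a crystal embedding which must coincide with $f_{\mathbf{v}}$ --- is sound.

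There is, however, a genuine gap, and it sits exactly where you pass over it most quickly. You assert that ``Ginzburg's geometric construction identifies the vertex labelled $L_{\psi }$ with the aperiodic multisegment $\psi $'' inside Grojnowski's $i$-restriction crystal. Grojnowski's theorem only gives an \emph{abstract} isomorphism between $\bigsqcup_{n}\mathrm{Irr}(\mathcal{H}_{n}^{a}(\xi ))$ and $B_{e}(\infty )$; Ginzburg's construction only gives a \emph{bijection of sets} between simples and aperiodic multisegments. The statement that these are compatible --- i.e.\ that the socle of the $i$-restriction of $L_{\psi }$ is $L_{\widetilde{e}_{i}\psi }$ with $\widetilde{e}_{i}$ the combinatorial operator of Theorem \ref{LTV-2} --- is the affine modular branching rule, which is precisely the main theorem of the reference \cite{AJL} being cited here, and the introduction of the present paper explicitly warns that Grojnowski's approach ``does not match up the abstract crystal obtained with the labellings of the simple modules by aperiodic multisegments or Kleshchev multipartitions.'' Without this input, your $\iota $ is a crystal embedding into an abstract copy of $B_{e}(\infty )$, and composing with the Ginzburg labelling gives only some map $\Phi _{e}(\mathbf{v})\rightarrow \Psi _{e}$ with no reason to equal the combinatorially defined $f_{\mathbf{v}}$. (The analogous compatibility on the cyclotomic side is Ariki's modular branching rule \cite{Ari4}, which you do cite, but you locate the ``main obstacle'' there rather than on the affine side, where the real difficulty lies.) A secondary, minor point: the last paragraph about matching images via $\varepsilon _{i}(\psi ^{\ast })\leq r_{i}$ is not needed once uniqueness of the embedding is invoked, and the link between $\varepsilon _{i}^{\ast }$ and the Jordan block sizes of $X_{1}$ is itself a theorem of Grojnowski (Theorem 9.13) that would need to be cited if you kept that step.
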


Conversely, given any simple $\mathcal{H}_{n}^{a}(\xi )$-module $L_{\psi }$,
it is natural to search for the Ariki-Koike algebras $\mathcal{H}_{n}^{ 
\mathbf{v}}(\xi )$ with $\mathbf{v}$ in $\mathcal{V}_{l}$ and the simple $
\mathcal{H}_{n}^{\mathbf{v}}(\xi )$-module $\widetilde{D}^{{\boldsymbol{\lambda}}}$ such that $\widetilde{D}^{{\boldsymbol{\lambda}}}\simeq L_{\psi
} $. This problem turns out to be more complicated. Indeed we have first to
determine all the multicharges $\mathbf{v}$ such that $f_{\mathbf{v}
}^{-1}(\psi )\neq \emptyset $ and next we need to compute the $l$-partition $
{\boldsymbol{\lambda}}$ satisfying $f_{\mathbf{v}}({\boldsymbol{\lambda}}
)=\psi $. Note that ${\boldsymbol{\lambda }}$ is necessarily unique for a
given $\mathbf{v}$ since $f_{\mathbf{v}}$ is injective. {We will then say
that $\mathbf{v}$ is an \emph{admissible multicharge} with respect to $\psi $
when }$f_{\mathbf{v}}^{-1}(\psi )\neq \emptyset .$ Then {${\boldsymbol{\
\lambda=}}$}$f_{\mathbf{v}}^{-1}(\psi )${\ is its corresponding \emph{\
admissible multipartition}.} In the next paragraphs, we shall completely
solve the problem of determining all the admissible multicharges and FLOTW
multipartitions associated to an aperiodic multisegment $\psi $. To obtain
the corresponding Kleshchev multipartition, it then suffices to apply $
\Gamma $.

\subsection{Admissible multicharges}

\label{algo} Let $\psi \in \Psi _{e}$. To find a multicharge $\mathbf{v}$
such that $f_{\mathbf{v}}^{-1}(\psi )\neq \emptyset $, we compute $
\varepsilon _{i}^{\ast }(\psi )$ for all $i\in \mathbb{Z}/e\mathbb{Z}$ by
using the equality $\varepsilon _{i}^{\ast }(\psi )=\widehat{r}_{i}(\psi )$
established in Theorem \ref{Th_f*=fhat}. For a multicharge $\mathbf{v}$ in $
\mathcal{V}_{l}$ and $i\in \mathbb{Z}/e\mathbb{Z}$, let $\kappa _{i}(\mathbf{
\ v})$ be the nonnegative integers such that 
\begin{equation*}
\mathbf{v}=(\underbrace{0,...,0}_{\kappa _{0}(\mathbf{v})},\underbrace{
1,...,1}_{\kappa _{1}(\mathbf{v})},...,\underbrace{e-1,...,e-1}_{\kappa
_{e-1}(\mathbf{v})}).
\end{equation*}
Then we have 
\begin{equation*}
f_{\mathbf{v}}^{-1}(\psi )\neq \emptyset \iff \forall i\in \mathbb{Z}/e 
\mathbb{Z},\ \kappa _{i}(\mathbf{v})\geq \varepsilon _{i}^{\ast }(\psi ).
\end{equation*}
Observe that the multicharge $\mathbf{v}(\psi )$ with $\kappa _{i}(\mathbf{v}
(\psi ))=\varepsilon _{i}^{\ast }(\psi )$ (defined at the end of \S \ref
{subsec_FLOW}) is the multicharge of minimal level among all the admissible
multicharges. It is of particular interest for the computation of the
involution $\sharp $ as we shall see in \S \ref{algo}.

\subsection{Admissible multipartitions}

\label{mpartition} Consider $\psi \in \Psi _{e}$, $l\in \mathbb{N}$ and an
admissible multicharge $\mathbf{v}\in \mathcal{V}_{l}$ with respect to $\psi 
$. The aim of this section is to give a simple procedure for computing the
admissible $l$-partition $\mathbf{{\boldsymbol{\lambda}}}\in \Phi _{e}( 
\mathbf{v})$ associated to $\psi $ (i.e. such that $f_{\mathbf{v}}({{
\boldsymbol{\lambda}}})=\psi $).

\noindent We begin with a general lemma on FLOTW $l$-partitions. Consider $
\mathbf{v\in}\mathcal{V}_{l}$ and ${\boldsymbol{\lambda}}\in{\Phi}_{e}( 
\mathbf{v})$ a non-empty $l$-partition. Let $m$ be the length of the minimal
non zero part of ${\boldsymbol{\lambda}}$. Let ${\boldsymbol{\mu}}$ be the $
l $-partition obtained by deleting in ${\boldsymbol{\lambda}}$ the parts of
length $m$.

\begin{lemma}
\label{lem_reduc}The $l$-partition ${\boldsymbol{\mu}}$ belongs to ${\Phi }
_{e}(\mathbf{v})$.
\end{lemma}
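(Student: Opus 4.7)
The plan is to write $\boldsymbol{\mu}$ explicitly as a truncation of $\boldsymbol{\lambda}$ and then verify the two FLOTW conditions of Definition~\ref{f} one by one. Since $m$ is the minimal nonzero part of $\boldsymbol{\lambda}$, every entry of every component is either $0$, equal to $m$, or strictly greater than $m$. In particular, in each component $\lambda^c$ the parts $>m$ occur first (in decreasing order), followed by the parts equal to $m$, then the zeros. If $n_c$ denotes the number of parts of $\lambda^c$ strictly greater than $m$, then deleting the length-$m$ parts gives $\mu^c$ with $\mu_i^c = \lambda_i^c$ for $1\le i \le n_c$ and $\mu_i^c = 0$ for $i>n_c$.

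For condition~(2) of Definition~\ref{f}, I would observe that the set of rows of $\boldsymbol{\mu}$ of length $k$ coincides with that of $\boldsymbol{\lambda}$ for every $k \neq m$ and is empty for $k=m$. Hence the missing residue guaranteed by the aperiodicity of $\boldsymbol{\lambda}$ still works for $\boldsymbol{\mu}$ when $k\neq m$, and the condition is vacuous when $k=m$. This part is immediate.

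The main obstacle lies in condition~(1). Setting $d_j=v_{j+1}-v_j$ for $j<l-1$ and $d_{l-1}=e+v_0-v_{l-1}$, I would verify the inequalities $\mu_i^j \geq \mu_{i+d_j}^{j+1}$ (with the cyclic convention) by splitting on whether $i\le n_j$. If $i>n_j$, then $\mu_i^j=0$, and I need $\mu_{i+d_j}^{j+1}=0$, i.e.\ $i+d_j>n_{j+1}$. This is where the minimality of $m$ is crucial: since $i>n_j$ we have $\lambda_i^j\in\{0,m\}$, so the FLOTW inequality $\lambda_i^j\geq \lambda_{i+d_j}^{j+1}$ forces $\lambda_{i+d_j}^{j+1}\in\{0,m\}$, which gives $i+d_j>n_{j+1}$, as required. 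If $i\le n_j$, then $\mu_i^j=\lambda_i^j>m$; if additionally $i+d_j\le n_{j+1}$ I use the inequality for $\boldsymbol{\lambda}$ directly, and otherwise $\mu_{i+d_j}^{j+1}=0$ and the inequality is trivial. The only subtle point is this case analysis, and once the minimality of $m$ is exploited to rule out parts strictly between $0$ and $m$, the argument becomes routine.
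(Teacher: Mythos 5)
Your argument is correct and is essentially the paper's proof run forwards: the paper argues by contradiction, but the key point in both is identical --- minimality of $m$ rules out nonzero parts strictly between $0$ and $m$, so the FLOTW inequalities for $\boldsymbol{\lambda}$ descend to $\boldsymbol{\mu}$. (You also verify condition (2) explicitly, which the paper treats as immediate.)
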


\begin{proof}
Assume that ${\boldsymbol{\mu}}\notin \Phi _{e}(\mathbf{v})$. Then one of
the following situations happens.

\begin{itemize}
\item[(i)] There exists $c\in \{0,1,\ldots ,l-1\}$ and $i\in \mathbb{N}$
such that $\mu _{i}^{c}<\mu _{i+v_{c+1}-v_{c}}^{c+1}$. This implies in
particular that $\mu _{i+v_{c+1}-v_{c}}^{c+1}\neq 0$. Since ${\boldsymbol{{\ 
\boldsymbol{\lambda}}}}$ belongs to $\Phi _{e}(\mathbf{v})$, we have $
\lambda _{i}^{c}\geq \lambda _{i+v_{c+1}-v_{c}}^{c+1}$. Thus $\lambda
_{i+v_{c+1}-v_{c}}^{c+1}=\mu _{i+v_{c+1}-v_{c}}^{c+1}$, $\lambda _{i}^{c}=m$
and $\mu _{i}^{c}=0$. We have $\lambda _{i+v_{c+1}-v_{c}}^{c+1}\neq 0$ and $
\lambda _{i+v_{c+1}-v_{c}}^{c+1}\leq m$. This contradicts the fact that ${\ 
\boldsymbol{{\boldsymbol{\mu}}}}$ is obtained from ${\boldsymbol{{\ 
\boldsymbol{\lambda}}}}$ by deleting the minimal nonzero parts.

\item[(ii)] There exists $i\in \mathbb{N}$ such that $\mu _{i}^{l-1}<\mu
_{i+v_{0}-v_{l-1}+e}^{0}$ . We obtain a contradiction similarly.
\end{itemize}
\end{proof}

\bigskip

For $\psi \in \Psi _{e}$, define $l_{1}>\cdots >l_{r}>0$ as the decreasing
sequence of (distinct) lengths of the segments appearing in $\psi .$ For any 
$t=1,\ldots ,r,$ write $a_{t}$ for the number of segments in $\psi $ with
length $l_{t}.$ Set $\psi _{0}=\boldsymbol{\emptyset }$ the empty
multisegment and $\psi _{r}=\psi .$ For any $1\leq t\leq r-1$ let $\psi _{t}$
be the multisegment obtained from $\psi $ by deleting successively the
segments of length $l_{t+1},\ldots ,l_{r}.$ Clearly $\psi _{t}$ is aperiodic.

Assume $\mathbf{{\boldsymbol{\lambda }}}\in \Phi _{e}(\mathbf{v})$ is
associated to $\psi $.\ Since $f_{\mathbf{v}}({{\boldsymbol{\lambda }}}
)=\psi $, the sequence $l_{1}>\cdots >l_{r}$ is also the decreasing sequence
of distinct parts appearing in $\mathbf{{\boldsymbol{\lambda }}}.$ Moreover,
for ant $t=1,\ldots ,r,$ $\mathbf{{\boldsymbol{\lambda }}}$ contains $a_{t}$
parts equal to $l_{t}.$ Set ${\boldsymbol{\lambda }}[r]={\boldsymbol{\lambda 
}}$.\ Let ${\boldsymbol{\lambda }}[t],$ $t=0,\ldots ,r-1$ be the $l$
-partitions obtained by deleting successively the parts of lengths $
l_{r},\ldots ,l_{t+1}$ in ${\boldsymbol{\lambda }}$.\ By Lemma \ref
{lem_reduc}, the $l$-partitions ${\boldsymbol{\lambda }}[t],$ $t=0,\ldots
,r-1$ all belong to $\Phi _{e}(\mathbf{v})$. Since $f_{\mathbf{v}}({\ 
\boldsymbol{\lambda }})=\psi $ we must also have 
\begin{equation*}
f_{\mathbf{v}}({\boldsymbol{\lambda }}[t])=\psi _{t}\text{ for any }
t=0,\ldots ,r
\end{equation*}
by definition of the map $f_{\mathbf{v}}$ (see (\ref{f_v})).

\bigskip

We are going to compute ${\boldsymbol{\lambda }}$ from $\psi $ by induction
on the lengths $l_{1}>\cdots >l_{r}>0$ of the segments of $\psi .$ To do
this, we have to determine the sequence of $l$-partitions ${\boldsymbol{\
\lambda }}[t],$ $t=0,\ldots ,r$ associated to the segments $\psi
_{t},t=0,\ldots ,r.$ We have ${\boldsymbol{\lambda }}[0]=\boldsymbol{\
\emptyset }.$ Thus, it suffices to explain how ${\boldsymbol{\lambda }}
[t+1]\in \Phi _{e}(\mathbf{v})$ can be obtained from ${\boldsymbol{\lambda }}
[t]\in \Phi _{e}(\mathbf{v})$.

\noindent The $l$-partition ${\boldsymbol{\lambda }}[t+1]$ is constructed by
adding $a_{t+1}$ parts of lengths $l_{t+1}$ to ${\boldsymbol{\lambda }}[t]$
such that the parts added give segments $[k_{t+1};l_{t+1})$ in the
correspondence (\ref{f_v}). Since the nonzero parts ${\boldsymbol{\lambda }}
[t]$ are greater to $l_{t+1}$, these new parts can only appear on the bottom
of the partitions composing ${\boldsymbol{\lambda }}[t]$. The procedure for
computing ${\boldsymbol{\lambda }}[t+1]$ from ${\boldsymbol{\lambda }}[t]$
can be decomposed in the following three steps.

\begin{enumerate}
\item For $c=0,\ldots ,l-1,$ consider the integers 
\begin{equation*}
i_{c}=\text{min}\{a\in \mathbb{N}\ |\ {\boldsymbol{\lambda }}[t]_{a}^{c}=0\},
\end{equation*}
that is the sequence of depths of the partitions appearing in ${\boldsymbol{
\ \lambda }}$.

\item Let $c_{1},c_{2},\ldots ,c_{p}\in \{0,\ldots ,l-1\}$ be such that 
\begin{equation*}
k_{t+1}\equiv 1-i_{c_{1}}+v_{c_{1}}\equiv \cdots \equiv
1-i_{c_{p}}+v_{c_{p}}(\text{mod }e),
\end{equation*}
with $p\geq a_{t+1}$. These integers must exist by (\ref{f_v}) for $f_{ 
\mathbf{v(\psi )}}^{-1}(\psi _{t+1})\neq \emptyset $. Without loss of
generality, we can assume $c_{1}\blacktriangleleft \cdots \blacktriangleleft
c_{p}$ where $\blacktriangleleft $ is the total order on $\{0,\ldots ,l-1\}$
such that 
\begin{equation}
c\blacktriangleleft c^{\prime }\Longleftrightarrow \left\{ 
\begin{array}{l}
\mathrm{(i)}:v_{c}-i_{c}<v_{c^{\prime }}-i_{c^{\prime }}\text{ or} \\ 
\mathrm{(ii)}:v_{c}-i_{c}=v_{c^{\prime }}-i_{c^{\prime }}\text{ and }
c<c^{\prime }\text{ as integers.}
\end{array}
\right.  \label{condition}
\end{equation}

\item The problem reduces to determine $a_{t+1}$ partitions among the
partitions $\lambda \lbrack t]^{c_{f}},$ $f=1,\ldots ,p$ which, once
completed with a part $l_{t+1}$, yield an $l$-partition of $\Phi _{e}( 
\mathbf{v})$. Set $S[t+1]=\{(c_{_{1}},i_{c_{1}}),\ldots
,(c_{a_{t+1}},i_{c_{a_{t+1}}})\}$.\ Let $\widehat{{\boldsymbol{\lambda }}}
[t+1]$ be the $l$-partition defined by 
\begin{equation*}
\widehat{{\boldsymbol{\lambda }}}[t+1]_{i}^{c}=\left\{ 
\begin{array}{ll}
{\boldsymbol{\lambda }}[t]_{i}^{c} & \text{ if }(c,i)\notin S[t+1], \\ 
l_{t+1} & \text{ if }(c,i)\in S[t+1].
\end{array}
\right.
\end{equation*}
So $\widehat{{\boldsymbol{\lambda }}}[t+1]$ is obtained by adding $a_{t+1}$
parts $l_{t+1}$ on the bottom of the partitions ${\boldsymbol{\lambda }}
[t]^{c}$ with $c\in \{c_{1},\ldots ,c_{a_{t+1}}\}.$ This means that the new
parts are added on the bottom of the $a_{t+1}$ first partitions considered
following (\ref{condition}).
\end{enumerate}

\begin{lemma}
With the above notation, $\widehat{{\boldsymbol{\lambda}}}[t+1]={\boldsymbol{
\ \lambda}[t+1]}$.
\end{lemma}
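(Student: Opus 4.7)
The plan is to exploit the injectivity of $f_{\mathbf{v}} : \Phi_e(\mathbf{v}) \to \Psi_e$, which follows from Theorem~\ref{Th_fv} since $f_{\mathbf{v}}$ is identified there with a crystal embedding. Applying Lemma~\ref{lem_reduc} iteratively to the known solution $\boldsymbol{\lambda} = f_{\mathbf{v}}^{-1}(\psi)$ shows that $\boldsymbol{\lambda}[t+1] \in \Phi_e(\mathbf{v})$ and that $f_{\mathbf{v}}(\boldsymbol{\lambda}[t+1]) = \psi_{t+1}$. Thus it suffices to prove (a) $\widehat{\boldsymbol{\lambda}}[t+1] \in \Phi_e(\mathbf{v})$ and (b) $f_{\mathbf{v}}(\widehat{\boldsymbol{\lambda}}[t+1]) = \psi_{t+1}$; by injectivity these force $\widehat{\boldsymbol{\lambda}}[t+1] = \boldsymbol{\lambda}[t+1]$.

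Assertion (b) is immediate from the construction of $S[t+1]$: each new part of length $l_{t+1}$ placed at position $(c, i_c)$ with $c \in S[t+1]$ satisfies $1 - i_c + v_c \equiv k_{t+1}(\mathrm{mod}e)$ by the defining congruence, hence corresponds under (\ref{f_v}) to exactly the segment $[k_{t+1}; l_{t+1})$. Therefore $f_{\mathbf{v}}(\widehat{\boldsymbol{\lambda}}[t+1]) = f_{\mathbf{v}}(\boldsymbol{\lambda}[t]) + a_{t+1}\,[k_{t+1}; l_{t+1}) = \psi_t + a_{t+1}\,[k_{t+1}; l_{t+1}) = \psi_{t+1}$. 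For (a), the fact that $\widehat{\boldsymbol{\lambda}}[t+1]$ is a genuine $l$-partition is clear because $l_{t+1}$ is strictly smaller than every nonzero part of $\boldsymbol{\lambda}[t]$, so placing new parts at the first empty rows preserves row-decrease. Condition (2) of Definition~\ref{f} (aperiodicity of row-end residues) is equivalent to $f_{\mathbf{v}}(\widehat{\boldsymbol{\lambda}}[t+1])$ being aperiodic as a multisegment, which follows from $\psi_{t+1} \in \Psi_e$.

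The main obstacle is verifying the FLOTW domination inequalities of condition~(1) in Definition~\ref{f}. Since $\boldsymbol{\lambda}[t]$ already satisfies them and the only changes are $\widehat{\boldsymbol{\lambda}}[t+1]_{i_c}^c = l_{t+1}$ for $c \in S[t+1]$, the inequality $\widehat{\boldsymbol{\lambda}}[t+1]_i^c \geq \widehat{\boldsymbol{\lambda}}[t+1]_{i+v_{c+1}-v_c}^{c+1}$ can fail only at $i = i_c$ (if $c \in S[t+1]$) or at $i = i_{c+1} + v_c - v_{c+1}$ (if $c+1 \in S[t+1]$). The subcase $c \in S[t+1]$ at $i = i_c$ is handled by applying FLOTW to $\boldsymbol{\lambda}[t]$ at the zero entry $\boldsymbol{\lambda}[t]_{i_c}^c$, which forces $\boldsymbol{\lambda}[t]_{i_c + v_{c+1} - v_c}^{c+1} = 0$ and hence RHS $\leq l_{t+1}$. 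The critical subcase is $c+1 \in S[t+1]$ and $c \notin S[t+1]$: a violation would require LHS $= 0$, i.e.\ $i_{c+1} + v_c - v_{c+1} \geq i_c$, equivalently $v_c - i_c \geq v_{c+1} - i_{c+1}$; combined with the inequality $v_c - i_c \leq v_{c+1} - i_{c+1}$ inherited from $\boldsymbol{\lambda}[t]$, one obtains equality, so $v_c - i_c \equiv k_{t+1} - 1 \pmod e$, placing $c$ in $\{c_1, \ldots, c_p\}$. Condition~(\ref{condition}) then orders $c$ before $c+1$ under $\blacktriangleleft$, contradicting the selection of $c+1$ but not $c$. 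The cyclic inequality between components $l-1$ and $0$ is dispatched by the same argument after replacing $v_0$ with $v_0 + e$: two candidates $0$ and $l-1$ with wraparound equality $v_{l-1} - i_{l-1} = v_0 + e - i_0$ still satisfy $v_0 - i_0 < v_{l-1} - i_{l-1}$, so $\blacktriangleleft$ places $0$ ahead of $l-1$, yielding the same contradiction.
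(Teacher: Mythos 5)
Your proposal is correct and follows essentially the same route as the paper: reduce to showing $\widehat{{\boldsymbol{\lambda}}}[t+1]\in\Phi_{e}(\mathbf{v})$ and invoke the injectivity of $f_{\mathbf{v}}$, obtain condition (2) of Definition \ref{f} from the aperiodicity of $\psi_{t+1}$, and rule out a failure of condition (1) by showing that the offending component $c$ would also satisfy the defining congruence and precede $c+1$ (resp.\ $0$ precede $l-1$) under $\blacktriangleleft$, contradicting the selection of $S[t+1]$ via parts (ii) and (i) of (\ref{condition}). The only cosmetic difference is that you derive the equality $v_{c}-i_{c}=v_{c+1}-i_{c+1}$ by combining the two inequalities directly, whereas the paper argues that the violating row must be the first empty row of component $c$; these are equivalent.
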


\begin{proof}
It suffices to prove that $\widehat{{\boldsymbol{\lambda }}}[t+1]$ belongs
to $\Phi _{e}(\mathbf{v})$. Indeed, this will give $f_{\mathbf{v}}(\widehat{{
\boldsymbol{\lambda }}}[t+1])=f_{\mathbf{v}}({\boldsymbol{\lambda }}
[t+1])=\psi _{t+1}$ and thus, $\widehat{{\boldsymbol{\lambda }}}[t+1]={\ 
\boldsymbol{\lambda }[t+1]}$ since $f_{\mathbf{v}}$ is an embedding$.\ $The
second condition to be a FLOTW $l$-partition is clearly satisfied for the
multisegment $\psi _{t+1}$ is aperiodic. We have to check that the first
condition also holds.

\noindent Assume that $\widehat{{\boldsymbol{\lambda}}}[t+1]$ does not
satisfy condition 1 of Definition \ref{f}.

\noindent Suppose first we have $\widehat{{\boldsymbol{\lambda}}}
[t+1]_{i}^{s}<\widehat{{\boldsymbol{\lambda}}}[t+1]_{i+v_{s+1}-v_{s}}^{s+1}$
where $s\in \{1,\ldots ,l-1\}$ and $i$ is a nonnegative integer. Since ${\ 
\boldsymbol{\lambda}}[t]\in \Phi _{e}(\mathbf{v})$, we have $\widehat{{\ 
\boldsymbol{\lambda}}}[t+1]_{i}^{s}={\boldsymbol{\lambda}}[t]_{i}^{s}=0$ and 
${\boldsymbol{\lambda}}[t]_{i+v_{s+1}-v_{s}}^{s+1}=0$, $\widehat{{\ 
\boldsymbol{\lambda}}}[t+1]_{i+v_{s+1}-v_{s}}^{s+1}=l_{t+1}$. Thus $
(s+1,i+v_{s+1}-v_{s})\in S[t+1].$ We have two cases to consider.

\begin{itemize}
\item Assume $i+v_{s+1}-v_{s}>1$ and $\widehat{{\boldsymbol{\lambda}}}
[t+1]_{i+v_{s+1}-v_{s}-1}^{s+1}={\boldsymbol{\lambda}}
[t]_{i+v_{s+1}-v_{s}-1}^{s+1}>0.$ Then $i=1$ or $\widehat{{\boldsymbol{\
\lambda}}}[t+1]_{i-1}^{s}\neq 0.$ Indeed we must have $\widehat{{\boldsymbol{
\ \lambda}}}[t+1]_{i-1}^{s}=\widehat{{\boldsymbol{\lambda}}}
[t]_{i-1}^{s}\geq {\ \boldsymbol{\lambda}}[t]_{i+v_{s+1}-v_{s}-1}^{s+1}$
because ${\boldsymbol{\ \lambda}}[t]$ belongs to $\Phi _{e}(\mathbf{v})$. We
have $(s+1,i+v_{s+1}-v_{s})\in S[t+1]$.$\;$In particular 
\begin{equation*}
k_{t+1}\equiv v_{s+1}-(i+v_{s+1}-v_{s})+1\equiv v_{s}-i+1(\mathrm{mod}\text{ 
}e).
\end{equation*}
Since ${\boldsymbol{\lambda}}[t]_{i}^{s}=0$, this means that $(s,i)\in
S[t+1].$ But this is a contradiction. Indeed by condition $\mathrm{(ii)}$ of
(\ref{condition}), we should have $\widehat{{\boldsymbol{\lambda}}}
[t+1]_{i}^{s}=l_{t+1}\neq 0$.

\item Assume $i+v_{s+1}-v_{s}=1$ then $i=1$ and we have $v_{s+1}=v_{s}$.
Thus $(s,i)\in S[t+1]$ and we derive a contradiction similarly.
\end{itemize}

\noindent Now suppose we have $\widehat{{\boldsymbol{\lambda}}}
[t+1]_{i}^{l-1}>\widehat{{\boldsymbol{\lambda}}}
[t+1]_{i+v_{0}-v_{l-1}+e}^{0} $. The proof is analogue.\ We obtain that $
(l-1,i)\in S[t+1]$ and $(0,i+v_{0}-v_{l-1}+e)\in S[t+1].\;$This contradicts
condition $\mathrm{(i)}$ of (\ref{condition}).
\end{proof}

\bigskip

By using the above procedure, we are now able to compute the $l$-partitions $
{\boldsymbol{\lambda}}[t],$ $t=1,\ldots,r$ from {$\psi$} and from its
associated admissible multicharge $\mathbf{v}$. This thus gives a recursive
algorithm for computing the admissible $l$-partition ${\boldsymbol{\lambda}}$
from {$\psi$}.

\subsection{Example}

Let $e=4$. We consider the following aperiodic multisegment 
\begin{equation*}
\psi =[0;6)+[0;5)+[3;5)+[1;4)+2[3;3)+[0;3)+[2;2)+[2;1).
\end{equation*}
We have $\widehat{w}_{0}(\psi )=\widehat{R}\widehat{A}\widehat{R}\widehat{R},
$ $\widehat{w}_{1}(\psi )=\widehat{A}\widehat{A}\widehat{R},$ $\widehat{w}
_{2}(\psi )= \widehat{R}\widehat{R}\widehat{A}\widehat{A}\widehat{A}$ and $
\widehat{w}_{3}(\psi )=\widehat{R}\widehat{R}\widehat{A}\widehat{R} \widehat{
R}\widehat{A}$. This gives 
\begin{equation*}
\varepsilon _{0}^{\ast }(\psi )=2,\ \varepsilon _{1}^{\ast }(\psi
)=1,\varepsilon _{2}^{\ast }(\psi )=0,\ \varepsilon _{3}^{\ast }(\psi )=0.
\end{equation*}
Thus the multicharge $(0,0,1)$ is an admissible multicharge. Actually this
is the one with minimal level. We now use the above algorithm to compute the
associated admissible $l$-partition ${\boldsymbol{\lambda}}$. Using the same
notation as above, we successively obtain 
\begin{equation*}
{\boldsymbol{\lambda}}[0]=(\emptyset ,\emptyset ),\qquad {\boldsymbol{
\lambda }}[1]=(6,\emptyset,\emptyset),\qquad {\boldsymbol{\lambda}}
[2]=(6.5,5,\emptyset), \qquad {\boldsymbol{\lambda}}[3]=(6.5,5,4),
\end{equation*}
\begin{equation*}
{\boldsymbol{\lambda}} [4]=(6.5,5.3,4.3.3),\quad {\boldsymbol{\lambda}}
[5]=(6.5.2,5.3,4.3.3),\quad {\boldsymbol{\lambda}}[6]=(6.5.2,5.3.1,4.3.3)
\end{equation*}
${\boldsymbol{\lambda}}=(6.5.2,5.3.1,4.3.3)$ is the admissible $3$-partition
associated to the multicharge $(0,0,1)$ and $\psi $. We easily check that 
\begin{equation*}
f_{(0,0,1)}(6.5.2,5.3.1,4.3.3)=\psi .
\end{equation*}
This means that the modules $L_{\psi }$ and $\widetilde{D}^{{\boldsymbol{\
\lambda}}}$ are isomorphic.

\noindent The multicharge $(0,0,1,2,3)$ is another example of an admissible
multicharge (with level $5$) and its associated admissible multipartition is 
${\boldsymbol{\lambda}}=(6.3,5.3,4.3,2,5.1)$ .

\section{Computation of the involution $\sharp\label{Sec_sharpinv}$}

\subsection{The generalized Mullineux involution}

\label{subsecMulli}

The two fold symmetry $i\longleftrightarrow-i$ defines a skew crystal
isomorphism from $B_{e}(\mathbf{v})$ to $B_{e}(\mathbf{v}^{\sharp})$ where $
\mathbf{v}=(v_{0},\ldots,v_{l-1})$ and $\mathbf{v}^{\sharp}=(-v_{l-1},
\ldots,-v_{0})$ belong to $\mathcal{V}_{l}$ (see (\ref{vFLOTW})). Given ${
\boldsymbol{\lambda}\in\Phi}_{e}(\mathbf{v}),$ write $m_{l}^{\mathbf{v}}({\boldsymbol{\lambda})}\in{\Phi}_{e}(\mathbf{v}^{\sharp})$ for the image of ${
\boldsymbol{\lambda}}$ under this skew isomorphism. In \cite{KF}, Ford and
Kleshchev proved that for $l=1$, the map $m_{l}^{\mathbf{v}}$ reduces to the
Mullineux involution $m_{1}$ on $e$-restricted partitions. Thus we call $
m_{l}^{\mathbf{v}}$ the generalized Mullineux involution.

\noindent By \S\ 5.1, the set $\Phi_{e}^{K}(\mathbf{v})$ of Kleshchev $l$
-partitions has also the structure of an affine crystal isomorphic to $
B_{e}( \mathbf{v})$. In particular the two fold symmetry $
i\longleftrightarrow -i$ also defines a bijection $m_{l}^{\mathbf{v},K}$
from $\Phi_{e}^{K}(\mathbf{v} )$ to $\Phi_{e}^{K}(\mathbf{v}^{\sharp})$. In 
\cite{JL2}, we gave an explicit procedure yielding $m_{l}^{\mathbf{v},K}$.
Given ${{\boldsymbol{\ \lambda}=(\lambda}}^{0},\ldots,\lambda^{l-1})\in
\Phi_{e}^{K}(\mathbf{v})$, the $l$-partition ${{\boldsymbol{\mu}=}}m_{l}^{
\mathbf{v},K}({{\boldsymbol{\lambda})}}$ is obtained by computing first 
\begin{equation*}
{{\boldsymbol{\nu}=}}(m_{1}(\lambda^{0}),\ldots,m_{1}(\lambda^{l-1}))
\end{equation*}
i.e. the $l$-partition obtained by applying the Mullineux map to each
partition of ${{\boldsymbol{\lambda}}}$. The $l$-partition ${{\boldsymbol{\
\nu }}}$ does not belong to $\Phi_{e}^{K}(\mathbf{v}^{\sharp})$ in general
and we have then to apply a straightening algorithm (detailed in \cite{JL2} 
\S 4.3) to obtain ${{\boldsymbol{\mu}}}$.

\noindent As already noted in \S 5.1, we have a bijection (in fact a crystal
isomorphism) 
\begin{equation*}
\Gamma:\Phi_{e}(\mathbf{v})\rightarrow\Phi_{e}^{K}(\mathbf{v})
\end{equation*}
which can be made explicit by using the results of \cite{JL}. This permits
to compute the map $m_{l}^{\mathbf{v}}$ since 
\begin{equation}
m_{l}^{\mathbf{v}}=\Gamma^{-1}\circ m_{l}^{\mathbf{v},K}\circ\Gamma.
\label{geneMull}
\end{equation}

\begin{remark}
\ 

\begin{enumerate}
\item The previous procedure yielding the generalized Mullineux map $m_{l}^{ 
\mathbf{v}}$ can be optimized. In particular the conjugation by the map $
\Gamma $ can be avoided. Nevertheless, the pattern of the computation
remains essentially the same : it uses the original Mullineux map $m_{1}$
and the results of \cite{JL} on affine crystal isomorphisms.\ Since it
requires some technical combinatorial developments which are not essential
for our purposes, we have chosen to omit it here.

\item Note also that in the case $e=\infty $, the map $\Gamma $ is the
identity and $m_{1}$ is simply the conjugation operation on the partitions.
As observed in \cite[\S 4.4]{JL2}, the algorithm for computing $m_{l}^{ 
\mathbf{v}}=m_{l}^{\mathbf{v},K}$ then considerably simplifies.
\end{enumerate}
\end{remark}

\subsection{The algorithm}

Let $\psi \in \Psi _{e}$ then, to compute $\psi ^{\sharp }$, we first
determine an admissible multicharge $\mathbf{v}$ with respect to $\psi $ and
the associated admissible multipartition ${{\boldsymbol{\lambda }}}$. Then
we apply the above algorithm to compute $m_{l}^{\mathbf{v}}({{\boldsymbol{\lambda }}}).$ It turns out that the complexity of this algorithm
considerably increases with the level of $\mathbf{v}$. Hence, the use of the
admissible multicharge $\mathbf{v}(\psi )$ with minimal level is preferable.
Let us summarize the different steps of the procedure we have to apply to
compute $\psi ^{\sharp }$ :

\begin{enumerate}
\item For $i\in \mathbb{Z}/e\mathbb{Z}$, we compute $\varepsilon _{i}^{\ast
}(\psi )$. To do this, we use Theorem \ref{Th_f*=fhat} which gives the
equalities $\varepsilon _{i}^{\ast }(\psi )=\widehat{r_{i}}(\psi )$ for all $
i\in \mathbb{Z}/e\mathbb{Z}$. We then put 
\begin{equation*}
\mathbf{v}(\psi )=(\underbrace{0,...,0}_{\varepsilon _{0}^{\ast }(\mathbf{v}
)},\underbrace{1,...,1}_{\varepsilon _{1}^{\ast }(\mathbf{v})},..., 
\underbrace{e-1,...,e-1}_{\varepsilon _{e-1}^{\ast }(\mathbf{v})}).
\end{equation*}
By Theorem \ref{Th_f*=fhat}, $\mathbf{v}(\psi )$ is an admissible
multicharge in $\mathcal{V}_{l}$.

\item Using \S \ref{mpartition}, we compute the admissible FLOTW
multipartition \underline{$\boldsymbol{\lambda}$}$(\psi )$ with respect to $
\mathbf{v}({\boldsymbol{\lambda}})$ and ${\psi }$.

\item Using \S \ref{subsecMulli}, we compute the image $m_{l}^{\mathbf{v}}($ 
\underline{$\boldsymbol{\lambda}$}$(\psi ))$ of \underline{$\boldsymbol{\lambda}$}$(\psi )$ under the generalized Mullineux involution,

\item We finally obtain the aperiodic multisegment $\psi ^{\sharp }=f_{ 
\mathbf{v}(\psi )}(m_{l}^{\mathbf{v}}(${\underline{$\boldsymbol{\lambda }$}$
(\psi $}${)).}$ using \S \ref{subsec_FLOW}.
\end{enumerate}

\begin{remark}
In the case where $e=\infty ,$ our algorithm for computing the Zelevinsky
involution is essentially equivalent to that described by Moeglin and
Waldspuger in \cite{MW} except we use multipartitions rather than
multisegments.
\end{remark}

\section{Further remarks}

\subsection{Computation of the Kashiwara involution}

We have established in Section \ref{Sec_hat=star}, that the crystal
operators $\widetilde{f}_{i}^{\ast }$ and $\widehat{f}_{i}$ coincide for any 
$i\in \mathbb{Z}/e\mathbb{Z}$. Given $\psi \in \Psi _{e}$, we can thus
compute $\psi ^{\ast }$ by determining a path $\psi =\widetilde{f}
_{i_{1}}\cdots \widetilde{f}_{i_{n}}\boldsymbol{\emptyset}$ in $B_{e}(\infty
)$ from the empty multisegment to $\psi .$ We have then $\psi ^{\ast }= 
\widehat{f}_{i_{1}}\cdots \widehat{f}_{i_{n}}\boldsymbol{\emptyset}.$

\noindent By combining the algorithm described in Section \ref{Sec_sharpinv}
for computing the involution $\sharp $ with the relation $\ast =\tau =\rho
\circ \sharp $ , we obtain another procedure computing $\ast $ on $
B_{e}(\infty )$. This procedure does not required the determination of a
path in the crystal $B_{e}(\infty ).$

\begin{example}
Assume $e=2,$ the involution $\sharp $ is nothing but the identity and, thus
the Kashiwara involution coincides with $\rho $ on $B_{e}(\infty ).$
\end{example}

\subsection{Crystal commutor for $B_{e}(\mathbf{v})\otimes B_{e}(\mathbf{v}
^{\prime})$}

In \cite{KT}, Kamnitzer and Tingley introduced a crystal commutor for any
symmetrizable Kac-Moody algebra. Recall that a crystal commutor for $B_{e}( 
\mathbf{v})\otimes B_{e}(\mathbf{v}^{\prime })$ is a crystal isomorphism 
\begin{equation*}
\sigma _{\mathbf{v},\mathbf{v}^{\prime }}:B_{e}(\mathbf{v})\otimes B_{e}( 
\mathbf{v}^{\prime })\rightarrow B_{e}(\mathbf{v}^{\prime })\otimes B_{e}( 
\mathbf{v}).
\end{equation*}
This isomorphism is unique if and only if $B_{e}(\mathbf{v})\otimes B_{e}( 
\mathbf{v}^{\prime })$ does not contain two isomorphic connected components
that is, if the decomposition of the corresponding tensor product is without
multiplicity. Such a crystal commutor is defined by specifying the images of
the highest weight vertices of $B_{e}(\mathbf{v})\otimes B_{e}(\mathbf{v}
^{\prime }).$ It is easy to verify by using (\ref{TENS2}) that the highest
weight vertices of $B_{e}(\mathbf{v})\otimes B_{e}(\mathbf{v}^{\prime })$
are precisely the vertices of the form $\boldsymbol{\emptyset}\otimes 
\boldsymbol{\lambda}$ with $\boldsymbol{\lambda}\in B_{e}(\mathbf{v}^{\prime
})$ such that $\varepsilon _{i}(\boldsymbol{\lambda})\leq r_{i}$ for any $
i\in \mathbb{Z}/e\mathbb{Z}$ ($r_{i}$ is the number of coordinates in $
\mathbf{v}$ equal to $i$). Denote by $\mathcal{H}_{\mathbf{v},\mathbf{v}
^{\prime }}$ the set of highest weight vertices in $B_{e}(\mathbf{v})\otimes
B_{e}(\mathbf{v}^{\prime })$.

\noindent For any $\boldsymbol{\lambda}\in B_{e}(\mathbf{v}),$ write for
short $\boldsymbol{\lambda}^{\ast }=$\underline{$\boldsymbol{\lambda}$}$(f_{ 
\mathbf{v}^{\prime }}(\boldsymbol{\lambda})^{\ast })$ (see the definition of 
\underline{$\boldsymbol{\lambda}$} below (\ref{v(m)})). Since $\ast $ is an
involution, we have $\boldsymbol{\lambda}^{\ast }\in B_{e}(\mathbf{w})$
where $\mathbf{w}\in \mathcal{V}_{l}$ is the multicharge having $\varepsilon
_{i}(\boldsymbol{\lambda})$ coordinates equal to $i$ and level $l=\sum_{i\in 
\mathbb{Z}/e\mathbb{Z}}\varepsilon _{i}(\boldsymbol{\lambda })$. The
condition $\varepsilon _{i}(\boldsymbol{\lambda})\leq r_{i}$ for any $i\in 
\mathbb{Z}/e\mathbb{Z}$ then implies that $\boldsymbol{\lambda}^{\ast }\in
B_{e}(\mathbf{v})$. We have the following theorem which is the main result
of \cite{KT}.

\begin{theorem}
\ 

\begin{enumerate}
\item Assume $\boldsymbol{\emptyset}\otimes \boldsymbol{\lambda}\in \mathcal{
\ H}_{\mathbf{v},\mathbf{v}^{\prime }}$. Then $\boldsymbol{\emptyset}\otimes 
\boldsymbol{\lambda }^{\ast }\in \mathcal{H}_{\mathbf{v}^{\prime },\mathbf{v}
}.$

\item The map 
\begin{equation*}
\sigma _{\mathbf{v},\mathbf{v}^{\prime }}:\left\{ 
\begin{array}{l}
\mathcal{H}_{\mathbf{v},\mathbf{v}^{\prime }}\rightarrow \mathcal{H}_{ 
\mathbf{v}^{\prime },\mathbf{v}} \\ 
\boldsymbol{\emptyset}\otimes \boldsymbol{\lambda}\longmapsto \boldsymbol{\
\emptyset}\otimes \boldsymbol{\lambda}^{\ast }
\end{array}
\right.
\end{equation*}
defines a crystal commutor for $B_{e}(\mathbf{v})\otimes B_{e}(\mathbf{v}
^{\prime })$.
\end{enumerate}
\end{theorem}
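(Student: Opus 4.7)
The plan is to unpack both assertions using (i) the characterization $\mathcal{H}_{\mathbf{v},\mathbf{v}'} = \{\boldsymbol{\emptyset}\otimes\boldsymbol{\lambda} : \boldsymbol{\lambda}\in B_e(\mathbf{v}'),\ \varepsilon_i(\boldsymbol{\lambda})\leq r_i\text{ for all }i\}$ recalled just above the theorem; (ii) Proposition~8.2 of \cite{Kas} in the form $f_{\mathbf{w}}(\Phi_e(\mathbf{w})) = \{\psi\in\Psi_e : \varepsilon_i(\psi^*)\leq\kappa_i(\mathbf{w})\text{ for all }i\}$; (iii) the strict-embedding property of $f_{\mathbf{w}}$, which makes $\varepsilon_i$ invariant under $f_{\mathbf{w}}$; and (iv) the fact that the Kashiwara involution $*$ on $B_e(\infty)$ is weight-preserving and of order two.

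For Part 1, set $\psi = f_{\mathbf{v}'}(\boldsymbol{\lambda})\in\Psi_e$. The strict embedding gives $\varepsilon_i(\psi)=\varepsilon_i(\boldsymbol{\lambda})\leq r_i$ by hypothesis. Rewriting as $\varepsilon_i((\psi^*)^*)\leq r_i$ and applying Proposition~8.2 to the multicharge $\mathbf{v}$ yields $\psi^*\in f_{\mathbf{v}}(\Phi_e(\mathbf{v}))$, which identifies $\boldsymbol{\lambda}^*=\underline{\boldsymbol{\lambda}}(\psi^*)$ with its unique $f_{\mathbf{v}}$-preimage in $\Phi_e(\mathbf{v})$; in particular $\boldsymbol{\lambda}^*\in B_e(\mathbf{v})$. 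In the other direction, Proposition~8.2 applied to $\boldsymbol{\lambda}\in\Phi_e(\mathbf{v}')$ gives $\varepsilon_i(\psi^*)\leq r'_i$; combined with $\varepsilon_i(\boldsymbol{\lambda}^*)=\varepsilon_i(\psi^*)$ (again by the strict embedding), this delivers $\boldsymbol{\emptyset}\otimes\boldsymbol{\lambda}^*\in\mathcal{H}_{\mathbf{v}',\mathbf{v}}$.

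For Part 2, the bijectivity of $\sigma_{\mathbf{v},\mathbf{v}'}$ on highest weight vertices follows from $(\boldsymbol{\lambda}^*)^*=\boldsymbol{\lambda}$, which is inherited from $**=\mathrm{id}$ on $B_e(\infty)$ via the identifications above, so $\sigma_{\mathbf{v}',\mathbf{v}}\circ\sigma_{\mathbf{v},\mathbf{v}'}=\mathrm{id}$. To upgrade this bijection to a crystal isomorphism $B_e(\mathbf{v})\otimes B_e(\mathbf{v}')\to B_e(\mathbf{v}')\otimes B_e(\mathbf{v})$, I would decompose both tensor products into their connected components, each an irreducible highest weight crystal rooted at some $\boldsymbol{\emptyset}\otimes\boldsymbol{\lambda}$, respectively $\boldsymbol{\emptyset}\otimes\boldsymbol{\lambda}^*$. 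The crucial check is that the highest weights agree: using $\mathrm{wt}_{B_e(\mathbf{w})}(\boldsymbol{\nu})=\mathrm{wt}_{B_e(\infty)}(f_{\mathbf{w}}(\boldsymbol{\nu}))+\Lambda_{\mathbf{w}}$ together with $\mathrm{wt}(\psi^*)=\mathrm{wt}(\psi)$, one computes
\begin{equation*}
\Lambda_{\mathbf{v}}+\mathrm{wt}(\boldsymbol{\lambda})=\Lambda_{\mathbf{v}}+\Lambda_{\mathbf{v}'}+\mathrm{wt}(\psi)=\Lambda_{\mathbf{v}'}+\mathrm{wt}(\boldsymbol{\lambda}^*).
\end{equation*}
Paired components are then both realisations of the irreducible crystal with highest weight $\Lambda_{\mathbf{v}}+\mathrm{wt}(\boldsymbol{\lambda})$, and the unique crystal isomorphism between them sending $\boldsymbol{\emptyset}\otimes\boldsymbol{\lambda}$ to $\boldsymbol{\emptyset}\otimes\boldsymbol{\lambda}^*$ supplies the corresponding component of $\sigma_{\mathbf{v},\mathbf{v}'}$. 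The main obstacle is precisely the weight-matching identity above, which hinges on the weight preservation of $*$ and careful bookkeeping of the shifts $\Lambda_{\mathbf{v}},\Lambda_{\mathbf{v}'}$ introduced by the two embeddings; once this identity is in place, the remainder is the standard patching of irreducible highest weight components.
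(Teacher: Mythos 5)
This theorem is not proved in the paper at all: it is quoted verbatim as ``the main result of \cite{KT}'' (Kamnitzer--Tingley), and the only argument the authors supply is the short paragraph preceding the statement, which checks that $\boldsymbol{\lambda}^{\ast}\in B_{e}(\mathbf{v})$. Your proposal therefore goes well beyond what the paper does, and it is essentially correct as a self-contained argument from the ingredients the paper makes available. Your Part 1 is exactly the paper's preliminary remark plus its missing converse half: Proposition 8.2 of \cite{Kas} applied to $\mathbf{v}$ gives $\psi^{\ast}\in f_{\mathbf{v}}(\Phi_{e}(\mathbf{v}))$, and applied to $\mathbf{v}'$ gives $\varepsilon_{i}(\psi^{\ast})\leq r_{i}'$, which together with $\varepsilon_{i}(\boldsymbol{\lambda}^{\ast})=\varepsilon_{i}(\psi^{\ast})$ yields $\boldsymbol{\emptyset}\otimes\boldsymbol{\lambda}^{\ast}\in\mathcal{H}_{\mathbf{v}',\mathbf{v}}$. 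Your Part 2 (bijectivity from $\ast\ast=\mathrm{id}$, then matching the highest weights of paired components via $\mathrm{wt}(\psi^{\ast})=\mathrm{wt}(\psi)$ and the shifts $\Lambda_{\mathbf{v}},\Lambda_{\mathbf{v}'}$, and patching the unique isomorphisms of irreducible highest weight crystals) is the standard and correct way to upgrade the bijection on $\mathcal{H}_{\mathbf{v},\mathbf{v}'}$ to a crystal isomorphism; the weight identity you isolate is indeed the only nontrivial check. One small point you should make explicit: by definition $\boldsymbol{\lambda}^{\ast}=\underline{\boldsymbol{\lambda}}(\psi^{\ast})$ is the preimage of $\psi^{\ast}$ under $f_{\mathbf{v}(\psi^{\ast})}$ for the \emph{minimal} admissible multicharge, which is in general a different multipartition from the $f_{\mathbf{v}}$-preimage of $\psi^{\ast}$ (compare the example in Section 5 where $(0,0,1)$ and $(0,0,1,2,3)$ give different admissible multipartitions for the same $\psi$). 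The statement only makes sense if one systematically identifies vertices of the various $B_{e}(\mathbf{w})$ with their images in $\Psi_{e}$, i.e.\ works with $\psi^{\ast}$ itself and relabels it by the $f_{\mathbf{v}}$-preimage when viewing it inside $B_{e}(\mathbf{v})$. The paper is equally silent on this identification, so this is a shared imprecision rather than a gap in your argument, but stating it would make your proof airtight.
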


\noindent The results established in Sections \ref{Sec_hat=star} and \ref
{Sec_sharpinv} then permit to compute the crystal commutor of Kamnitzer and
Tingley for affine type $A$ crystals.

\begin{example}
Assume $e=2.$ Then the crystal commutor $\sigma _{\mathbf{v},\mathbf{v}
^{\prime }}$ satisfies $\sigma _{\mathbf{v},\mathbf{v}^{\prime }}( 
\boldsymbol{\emptyset }\otimes \boldsymbol{\lambda }\mathbf{)=(}\boldsymbol{
\ \emptyset }\otimes $\underline{$\boldsymbol{\lambda }$}$(\rho \circ f_{ 
\mathbf{v}^{\prime }}(\boldsymbol{\lambda }))$ for any $\boldsymbol{\
\emptyset }\otimes \boldsymbol{\lambda }\mathbf{\in }\mathcal{H}_{\mathbf{v}
, \mathbf{v}^{\prime }}$.
\end{example}

\bigskip

\noindent \textbf{Acknowledgments.} The authors are very grateful to the
anonymous referee for his/her careful reading and for having pointing
several inaccuracies in a previous version of this paper.

\end{document}